\theoremstyle{plain}
\newtheorem{thm}{Theorem}
\newtheorem{theorem}[thm]{Theorem}
\newtheorem*{theorem*}{Theorem}
\newtheorem{corollary}[thm]{Corollary}
\newtheorem{lemma}[thm]{Lemma}
\newtheorem*{lemma*}{Lemma}
\newtheorem*{conjecture*}{Conjecture}
\newtheorem{proposition}[thm]{Proposition}
\newtheorem{claim}{Claim}
\newtheorem*{fact*}{Fact}
\newtheorem*{question*}{Question}
\newtheorem{main}{Theorem}
\newtheorem{maintheorem}[main]{Theorem}
\newtheorem{maincorollary}[main]{Corollary}
\theoremstyle{remark}
\newtheorem*{remark}{Remark}
\theoremstyle{definition}
\newtheorem{definition}[thm]{Definition}
\newtheorem*{convention}{Convention}
\newtheorem*{standing hypothesis}{Standing Hypothesis}
\newcommand{\thmref}[1]{Theorem~\ref{#1}}
\newcommand{\corref}[1]{Corollary~\ref{#1}}
\newcommand{\lemref}[1]{Lemma~\ref{#1}}
\newcommand{\itemref}[1]{\textnormal{(\ref{#1})}}
\newcommand{\itemrefstar}[1]{(\ref*{#1})}
\newcommand{\claimref}[1]{Claim~\ref{#1}}
\newcommand{\defn}[1]{\emph{#1}}
\newcommand{\R}{\mathbb{R}}
\renewcommand{\setminus}{\smallsetminus}
\DeclareMathOperator{\radius}{radius}
\DeclareMathOperator{\diam}{diam}
\DeclareMathOperator{\susp}{\Sigma}
\newcommand{\set}[1]{\left\{#1\right\}}
\newcommand{\setp}[2]{\left\{#1 \mid #2\right\}}
\newcommand{\abs}[1]{\left| #1 \right|}
\newcommand{\res}[1]{\vert_{#1}}
\DeclareMathOperator{\Lk}{Lk}
\newcommand{\cl}[1]{\overline{#1}}
\newcommand{\bd}{\partial}
\DeclareMathOperator{\Dualset}{\mathcal D}
\newcommand{\D}{\Dualset}
\newcommand{\W}{\mathcal W}
\DeclareMathOperator{\A}{\mathcal A}
\DeclareMathOperator{\Centers}{Centers}
\newcommand{\w}{\omega}
\newcommand{\VV}[1]{\ensuremath{\check{\mathrm{#1}}}}
\newcommand{\G}{\Gamma}
\newcommand{\BG}{\beta \Gamma}
\newcommand{\Hd}{\mathcal{H}d}
\newcommand{\g}{\gamma}
\newcommand{\s}{\subseteq}
\newcommand{\x}{-}
\DeclareMathOperator{\dT}{d_T}
\renewcommand{\epsilon}{\varepsilon}
\renewcommand{\setp}[2]{\left\{#1 : #2\right\}}
\newcommand{\bdT}{\partial_T}
\newcommand{\bdc}{\partial_{\infty}}
\DeclareMathOperator{\dW}{d_{\W}}
\DeclareMathOperator{\CAT}{CAT}
\title[A Rank Rigidity Result for One-Dimensional Tits Boundaries]{A Rank Rigidity Result for CAT(0) Spaces with One-Dimensional Tits Boundaries}
\author{Russell Ricks}
\address{Binghamton University, Binghamton, New York, USA}
\email{ricks@math.binghamton.edu}
\thanks{The author would like to thank Ralf Spatzier for his support and many helpful discussions.
This material is based upon work supported by the National Science Foundation under Grant Number NSF 1045119.}
\begin{document}

\begin{abstract}
We prove the following rank rigidity result for proper CAT(0) spaces with one-dimensional Tits boundaries:
Let $\Gamma$ be a group acting properly discontinuously, cocompactly, and by isometries on such a space $X$.
If the Tits diameter of $\bd X$ equals $\pi$ and $\Gamma$ does not act minimally on $\bd X$, then $\bd X$ is a spherical building or a spherical join.
If $X$ is also geodesically complete, then $X$ is a Euclidean building, higher rank symmetric space, or a nontrivial product.
Much of the proof, which involves finding a Tits-closed convex building-like subset of $\bd X$, does not require the Tits diameter to be $\pi$, and we give an alternate condition that guarantees rigidity when this hypothesis is removed, which is that a certain invariant of the group action be even.
\end{abstract}

\maketitle

\section{Introduction}

The Rank Rigidity Theorem for nonpositively curved manifolds (see \cite{ballmann} or \cite{burns-spatzier}) states that if a group acts geometrically---that is, properly discontinuously, cocompactly, and by isometries---on a nonpositively curved Riemannian manifold, one of the following holds:
(a)  $M$ admits a rank one axis,
(b)  $M$ splits as nontrivial product, or
(c)  $M$ is a higher rank symmetric space.

$\CAT(0)$ spaces generalize nonpositive curvature from the Riemannian to the metric setting, allowing one to study nonpositive curvature without requiring a smooth manifold structure.
A well-known conjecture for $\CAT(0)$ spaces is the following generalization of the Rank Rigidity Theorem for compact nonpositively curved manifolds (cf. \cite{bb}).

\begin{conjecture*} [Rank Rigidity]
Let $X$ be a proper, geodesically complete $\CAT(0)$ space under a geometric action.
If $X$ does not admit a rank one axis, then $X$ is a higher rank
symmetric space or Euclidean building, or splits as a nontrivial product.
\end{conjecture*}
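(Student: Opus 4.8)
The conjecture is open in general, so I will sketch an approach under the hypotheses of this paper: $X$ proper and geodesically complete, $\Gamma$ acting geometrically, and $\dim \bdT X = 1$. The ``no rank one axis'' hypothesis is awkward to use directly, so I would instead work --- as this paper does --- with the two concrete hypotheses that the Tits diameter of $\bd X$ equals $\pi$ and that $\Gamma$ does not act minimally on $\bd X$; the remaining cases (Tits diameter $<\pi$, or $\Gamma$ acting minimally) I would treat separately, as they are where one expects, respectively, a circumcenter-type reduction and the emergence of rank one dynamics. The immediate target is then to show that $\bd X$, carrying its Tits metric, is a spherical building or a spherical join; the stated conclusion about $X$ will follow from that.

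The core of the argument exploits one-dimensionality. Since $\dim \bdT X = 1$, the Tits boundary is a $\CAT(1)$ metric graph: small metric balls in it are $\CAT(1)$ trees, every point has a neighborhood that is a finite union of arcs, and the space of directions at each point is a finite discrete set whose cardinality records the local branching. I would first record this combinatorial data and the control it provides on Tits geodesics, their extensions, and antipode sets; one-dimensionality is exactly what keeps all of this finite and so makes a combinatorial attack feasible.

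Assume now that $\Gamma$ does not act minimally, and fix a proper nonempty closed $\Gamma$-invariant set $C \subseteq \bd X$. From $C$ I would build a subset $Y \subseteq \bd X$ that is Tits-closed, Tits-convex, $\Gamma$-invariant, and \emph{building-like} --- a spherical building or a spherical join in the induced metric --- by an induction on the link data: pass to the Tits-convex hull of a carefully chosen family of antipodal pairs drawn from $C$, verify the combinatorial building axioms (extension of geodesics, the exchange condition) at the level of the resulting $1$-skeleton, and use the cocompactness of the $\Gamma$-action to supply the homogeneity. A recurrence or ergodicity argument --- using the $\Gamma$-action on the space of geodesic lines of $X$ and an associated invariant measure --- should then force $Y = \bd X$, after which the combinatorial characterization of one-dimensional spherical buildings identifies $\bd X$ itself as a spherical building or a spherical join. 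This extraction step is the crux of the whole argument: a merely $\Gamma$-invariant proper closed set carries almost no local geometric structure, so the real work lies in combining the one-dimensional local geometry, Tits-convexity, and the cocompact action to force building-like regularity, and then in ruling out that $Y$ is proper. I expect controlling the interface between $Y$ and its complement to be the hardest point, and it is presumably there that the hypothesis that the Tits diameter of $\bd X$ equals $\pi$ --- or, in its absence, the parity condition on an invariant of the action mentioned in the abstract --- is used essentially.

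It remains to descend from $\bd X$ to $X$. If $\bd X$ is a nontrivial spherical join, then since $X$ is geodesically complete and the action cocompact the join decomposition of the Tits boundary lifts to a nontrivial metric product decomposition of $X$, giving the ``product'' alternative. Otherwise, by the dichotomy above, $\bd X$ is a spherical building which, not being a nontrivial join, is thick and irreducible; since $\dim \bdT X = 1$ it has rank $2$, and Leeb's characterization of symmetric spaces and Euclidean buildings then shows that a geodesically complete, cocompact $\CAT(0)$ space with such a Tits boundary is a higher rank symmetric space or a Euclidean building. In every case the conclusion of the conjecture follows.
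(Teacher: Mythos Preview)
First, note that the statement is a \emph{conjecture} in the paper, not a theorem; the paper proves it only under the additional hypotheses you adopt ($\dim\bdT X=1$, Tits diameter $\pi$, $\Gamma$ not minimal on $\bdc X$). So the relevant comparison is between your sketch and the paper's proof of \thmref{MC1}/\corref{MC2}. Your two-step architecture --- first show $\bdT X$ is a spherical building or join, then descend to $X$ via Leeb --- matches the paper, and your last paragraph is essentially correct.

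The middle of your sketch, however, diverges from the paper and contains genuine gaps. The paper does \emph{not} use any recurrence or ergodicity argument on the geodesic flow to force the building-like subset to be all of $\bd X$; instead it works entirely with Guralnik--Swenson limit operators $T^\omega$ on $\bdc X$, \emph{folding} onto round circles, and $\pi$-convergence/pulling. The organizing invariant you are missing is the integer $\ell=\abs{M\cap K}$, where $M$ is a \emph{minimal} closed $\Gamma$-invariant set (not an arbitrary one) and $K$ is a folded round circle; the Centers Lemma forces $M\cap K$ to be finite and uniformly spaced. The proof is then a case split on $\ell$: for $\ell\le 2$ one gets a join by a radius-rigidity splitting; for $\ell$ even one shows $M$ itself is involutive and --- crucially --- proves $\bdT X$ is geodesically complete so that Lytchak's theorem applies; for $\ell$ odd one builds the convex set $\W$ as a union of ``$\ell$-click'' circles, and it is precisely the hypothesis $\diam\bdT X=\pi$ that rules out proper $\R$-trees hanging off $\W$ and gives $\W=\bd X$. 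Your proposed ``Tits-convex hull of antipodal pairs'' and ``induction on link data'' do not supply any of this structure, and the ergodicity step you invoke has no evident mechanism here: what actually propagates local structure globally is the density of $\Gamma K$ in $\bdc X$ together with incompressibility of short arcs under all $T^\omega$. Finally, you omit the verification that $\bdT X$ is geodesically complete, without which Lytchak's involutive-set theorem cannot be invoked in the even case.
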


The Rank Rigidity Conjecture is known to hold among various classes of $\CAT(0)$ spaces.
In addition to the Riemannian case mentioned previously, Rank Rigidity holds for $\CAT(0)$ cube complexes \cite{caprace-sageev}.
It also holds for $\CAT(0)$ piecewise-smooth $2$-dimensional polyhedral complexes \cite{bbrin-2dim}.
A similar result holds for $\CAT(0)$ $3$-dimensional Euclidean polyhedral complexes \cite{bbrin-3dim}.

Each of these results requires an additional structure on the $\CAT(0)$ space---a Riemannian or polyhedral structure.
Our approach, in contrast, is to put only a dimension restriction on the Tits boundary of the $\CAT(0)$ space.
The Tits boundary consists of all the geodesic rays from a fixed basepoint, and it carries a natural metric called the Tits metric.
If $X$ is a proper $\CAT(0)$ space admitting a geometric action and the Tits boundary $\bdT X$ has dimension zero, then $X$ is $\delta$-hyperbolic and rank rigidity is trivial.
So studying the case where $\bdT X$ has dimension one is a natural first step toward the general conjecture.

Now, every $\CAT(0)$ piecewise-smooth $2$-dimensional polyhedral complex has $\dim(\bdT X) \le 1$, and every $\CAT(0)$ $3$-dimensional Euclidean polyhedral complex has $\dim(\bdT X) \le 2$, but in both cases the dimension may be $0$ or $1$.
On the other hand, the product of two $\delta$-hyperbolic proper $\CAT(0)$ spaces $X,Y$ of any dimension has $\dim \bdT (X \times Y) = 1$.
\corref{MC2} provides a version of rank rigidity that works directly from the dimension of the boundary and detects such products.

The set of geodesic rays with fixed basepoint, which forms the Tits boundary, comes with another natural structure, the cone topology.
This topology is coarser than the one induced by the Tits metric, and we will sometimes write $\bdc X$ to emphasize that we are using the cone topology.

When $X$ is a nonpositively curved Riemannian manifold (under a geometric group action),
the following three conditions are equivalent:
\begin{enumerate}
\item  \label{ro1}
$X$ admits a rank one axis
\item  \label{ro2}
$\diam (\bdT X) > \pi$
\item  \label{ro3}
the induced $\G$-action on $\bdc X$ is minimal---that is, the boundary contains no proper nonempty $\G$-invariant subsets that are closed in the cone topology%
\end{enumerate}
(see \cite{ballmann} or \cite{ricks-mixing} for a more detailed discussion).
On the other hand, for a $\CAT(0)$ space, while one may conjecture that all three conditions are equivalent, we only know that
\itemrefstar{ro1} holds if and only if both \itemrefstar{ro2} and \itemrefstar{ro3} hold.
In this paper, we prove rigidity assuming the stronger hypothesis that both \itemrefstar{ro2} and \itemrefstar{ro3} fail:

\begin{maintheorem}[\thmref{dimension 1, diameter pi, boundary statement}]	\label{MC1}
Let $\Gamma$ be a group acting geometrically on a proper $\CAT(0)$ space $X$
with $\dim(\bdT X) = 1$.
If $\diam (\bdT X) = \pi$ and $\Gamma$ does not act minimally on $\bdc X$, then $\bdT X$ is a spherical building or a spherical join. \end{maintheorem}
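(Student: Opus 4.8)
The plan is to manufacture, out of the failure of minimality, a Tits-closed and Tits-convex ``building-like'' subset of $\bdT X$, to identify it (using one-dimensionality) as a spherical building or a spherical join, and finally to show that it exhausts $\bdT X$. The non-minimality hypothesis provides a proper nonempty subset $Z \subseteq \bdc X$ that is closed in the cone topology and $\Gamma$-invariant, and the whole argument is organized around extracting rigid Tits-metric structure from this single crude datum. Since $\diam(\bdT X) = \pi$, there are no rank one directions: every point of $\bdT X$ has an antipode, and in dimension one $\bdT X$ is (the metric realization of) a graph whose local structure is controlled by the Balser--Lytchak structure theory for one-dimensional $\CAT(1)$ spaces. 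The first step is to use $Z$ to single out a distinguished $\Gamma$-invariant family of \emph{walls} of $\bdT X$---in dimension one a wall is a pair of antipodal points, equivalently the fixed set of a local reflection---by analyzing which bi-infinite geodesics of $X$ have their two endpoints ``separated'' by $Z$ in the appropriate sense. Cocompactness makes such geodesics abundant, with good control of the geodesic flow and of the regular set of geodesics, while $Z$ being cone-closed and $\Gamma$-invariant constrains, and renders $\Gamma$-invariant, the resulting collection of walls.

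Next I would define the candidate subset $B \subseteq \bdT X$ to be the Tits-closure of the union of all round circles (isometrically embedded unit circles, i.e.\ circles of length $2\pi$ in the Tits metric, the prospective apartments) that are compatible with this wall family, and prove that (i) $B$ is Tits-closed and Tits-convex, and (ii) $B$ is building-like: any two of its points lie on a common round circle inside $B$, and these circles satisfy the combinatorial compatibility needed either to identify $B$ with a spherical building or to exhibit a spherical join splitting. One-dimensionality is what makes (ii) tractable: $B$ is a metric graph, a round circle is just an embedded cycle of length $2\pi$, and the building-versus-join dichotomy reduces to comparing girth with Tits diameter along $B$---a thick generalized $m$-gon has girth exactly twice its diameter, and a failure of this relation, in the presence of the homogeneity forced by cocompactness of the $\Gamma$-action on $X$, produces a $\pi$-suspension, that is, a spherical join. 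Here I would also invoke Lytchak's rigidity theorems for spherical buildings and spherical joins realized as convex subsets of $\CAT(1)$ spaces.

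Finally I would promote $B$ to all of $\bdT X$. Being nonempty, Tits-closed, Tits-convex, $\Gamma$-invariant, and of full dimension one, a hypothetical point $\xi \in \bdT X \setminus B$ would have to attach to $B$ along a Tits geodesic of length $\pi$ (using that the boundary of a cocompact $\CAT(0)$ space has no ``dead ends'' together with $\diam(\bdT X) = \pi$); then $\Gamma$-invariance of $B$ and coarse density of the $\Gamma$-orbits in $X$ would force the attaching locus to fill $B$ out, a contradiction---unless $\xi$ lies in a part of $\bdT X$ detached from $B$, in which case $\bdT X$ itself splits as a spherical join and we are again done. This last step is also where the stated substitute for the diameter hypothesis enters: with $\diam(\bdT X)=\pi$ removed one needs an evenness (parity) condition on the relevant invariant of the action to guarantee that the walls glue up coherently rather than leaving an obstruction. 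Once $B = \bdT X$, the structure found in the previous step is exactly the conclusion: $\bdT X$ is a spherical building or a spherical join.

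I expect the hard part to be the second step---producing the building-like subset, and above all verifying the apartment and incidence axioms starting only from the existence of \emph{some} cone-closed $\Gamma$-invariant $Z$. Turning a topological invariance condition into the rigid Tits-metric combinatorics of a building is the crux; it should require a careful study of how geodesics of $X$ accumulate onto $Z$ through the geodesic flow and the regular set, and the delicate point will be ensuring that the extracted walls are numerous and mutually coherent enough to cover $\bdT X$ by apartments, rather than producing only some sub-configuration. The hypothesis $\diam(\bdT X) = \pi$, and its replacement by the parity condition, should be precisely what forces this coherence.
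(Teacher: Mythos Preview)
Your proposal is not a proof but a research plan, and its central mechanism differs substantially from the paper's. The paper does not attempt to extract ``walls'' from an arbitrary closed invariant set $Z$ via separation of geodesic endpoints, nor does it appeal to a girth-versus-diameter dichotomy on a metric graph. Instead, it passes immediately from $Z$ to a \emph{minimal} closed $\Gamma$-invariant set $M\subsetneq\bd X$ and intersects $M$ with a \emph{folded round sphere} $K$ (a round circle onto which some limit operator $T^\omega$ collapses all of $\bd X$ isometrically). The integer $\ell=|M\cap K|$ is the organizing invariant: it is finite (\lemref{infinite case}), and when $M$ is chosen to minimize $\ell$ the set $M\cap K$ is uniformly spaced (\lemref{uniformly spaced}). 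The proof is then a case split on $\ell$. For $\ell\le 2$ the splitting follows from \thmref{radius rigidity}. For $\ell$ even, $M$ itself is involutive (\lemref{even involutives}); one then checks, using incompressible arcs and pulling, that $\bdT X$ is geodesically complete, and applies Lytchak's theorem on proper closed involutive subsets (\thmref{lytchak rigidity}). For $\ell$ odd, the paper constructs explicitly the set $\W$ of all Tits geodesics joining points of $M$, shows it is a union of ``$\ell$-click'' circles with branching only at $M\cup\A(M)$, proves convexity by a direct distance estimate, and then uses $\diam(\bdT X)=\pi$ to rule out the residual $\R$-trees and conclude $\W=\bdT X$.

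The concrete gaps in your plan are these. First, ``walls determined by geodesics whose endpoints are separated by $Z$'' is not a definition: in dimension one a wall is just a pair of antipodes, and you give no mechanism for selecting a $\Gamma$-invariant family of such pairs from $Z$ alone. The paper's mechanism---minimal sets, folded spheres, the Centers Lemma, $\pi$-convergence, and pulling---is what produces the rigid combinatorics (uniform spacing, incompressibility of short arcs, the $\ell$-click structure), and none of that is present in your outline. Second, the rigidity input you invoke is not the one the paper uses: the paper needs Lytchak's theorem about involutive subsets of a \emph{geodesically complete} $\CAT(1)$ space, and establishing geodesic completeness of $\bdT X$ is a nontrivial step (done in the even case via incompressible arcs and Ballmann--Buyalo). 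Your sketch does not address this. Third, your endgame---ruling out $\xi\in\bdT X\setminus B$ by a ``no dead ends'' argument---does not by itself exclude the possibility of short $\R$-trees hanging off $B$; in the paper this is exactly what happens in the odd case before imposing $\diam(\bdT X)=\pi$, and the parity condition you allude to is precisely the parity of $\ell$, which you never introduce.
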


By a theorem of Leeb \cite{leeb}, we obtain rigidity for the original $\CAT(0)$ space.

\begin{maincorollary}[\corref{dimension 1, diameter pi, geodesically complete}]	\label{MC2}
Let $\Gamma$ be a group acting geometrically on a proper, geodesically complete $\CAT(0)$ space $X$ with $\dim(\bdT X) = 1$.
If $\diam (\bdT X) = \pi$ and $\Gamma$ acts minimally on $\bdc X$, then $X$ is a Euclidean building, higher rank symmetric space, or a nontrivial product. \end{maincorollary}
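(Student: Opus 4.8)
The plan is to obtain this corollary by feeding the boundary conclusion of \thmref{MC1} into two further inputs: Leeb's recognition theorem for symmetric spaces and Euclidean buildings \cite{leeb}, and the fact that, for a proper, geodesically complete $\CAT(0)$ space under a geometric action, a spherical join decomposition of the Tits boundary is induced by an isometric product decomposition of the space itself. Geodesic completeness is precisely the hypothesis---absent from \thmref{MC1}---that makes these inputs available. Thus the first step is to apply \thmref{MC1}, whose hypotheses hold verbatim, and record that $\bdT X$ is a spherical building or a nontrivial spherical join.

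Suppose first that $\bdT X$ is a spherical building. Decomposing it into its irreducible join factors, we may assume it is irreducible, since otherwise it is a nontrivial spherical join, treated below. An irreducible spherical building of dimension $1$ has rank $2$. If it is thick, Leeb's theorem \cite{leeb} applies directly to the proper, geodesically complete space $X$ whose Tits boundary is an irreducible thick spherical building of rank at least $2$, and yields that $X$ is a higher rank symmetric space or a Euclidean building. If the building is non-thick, then in rank $2$ it must be thin, hence a single apartment---a circle of circumference $2\pi$---and the flat plane and flat torus theorems, together with cocompactness and geodesic completeness, force $X$ to be isometric to $\R^2$, which is in particular a Euclidean building.

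It remains to treat the case that $\bdT X$ is a nontrivial spherical join; write $\bdT X = B_1 \ast B_2$ with both $B_i$ nonempty (a reducible building also lands here). Invoking the splitting principle above, the join decomposition is realized by an isometric product $X = X_1 \times X_2$ with $\bdT X_i = B_i$; since both $B_i$ are nonempty, neither $X_i$ is a point, so the product is nontrivial. This exhausts the cases and proves the corollary.

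The step I expect to be the main obstacle is the last one: upgrading a join splitting of $\bdT X$ to an honest isometric product splitting of $X$. For a general proper $\CAT(0)$ space a join at infinity need not arise from a product, so one genuinely needs geodesic completeness together with cocompactness, and one must further check that the factors $X_i$ are again proper $\CAT(0)$ spaces carrying geometric actions of finite-index subgroups of $\Gamma$, so that the conclusion is neither vacuous nor lost when the argument is iterated. A secondary technical point is the non-thick building case, where one must recognize every proper, geodesically complete $\CAT(0)$ space with a circular Tits boundary admitting a geometric action as the Euclidean plane.
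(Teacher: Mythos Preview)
Your proposal is correct and follows essentially the same route as the paper: apply \thmref{MC1} to get that $\bdT X$ is a spherical building or a spherical join, then invoke Leeb's theorem (\thmref{leeb}) for the irreducible building case and the join-to-product splitting (cited in the paper from \cite[Theorem II.9.24]{bridson}) for the join case. The paper's own proof is a single line, ``Applying \thmref{leeb}, we obtain the following,'' relying on the splitting fact already recorded earlier; your extra care around the non-thick building edge case and the hypotheses needed for the product splitting is reasonable but not strictly necessary here, since a thin rank-$2$ building is a circle, hence already a spherical join $S^0 * S^0$, and the paper's version of Leeb's theorem only asks for a non-discrete irreducible building, which any $1$-dimensional irreducible building is.
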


In fact, we prove the following result, stronger than \thmref{MC1}.

\begin{maintheorem}[\thmref{main theorem}]	\label{MT}
Let $\Gamma$ be a group acting geometrically on a proper $\CAT(0)$ space $X$.
Assume $\dim(\bdT X) = 1$ and $M \subsetneq \bdc X$ is a proper minimal subset of $\bdc X$.  Let $K \subset \bdT X$ be a folded round sphere, and let $\ell = \abs{M \cap K}$.  Then $1 \le \ell < \infty$.  Suppose that $M$ is chosen to minimize $\ell$, among all minimal sets $M \subset \bdc X$.  If $\ell = 1$, $\bdT X$ splits as a suspension.  If $\ell = 2$, $\bdT X$ splits as a spherical join.  If $\ell \ge 4$ is even, or if $\ell \ge 3$ is odd and $\diam (\bdT X) = \pi$, then $\bdT X$ either is a spherical building or splits as a spherical join. \end{maintheorem}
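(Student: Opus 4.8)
The plan is to study the proper minimal set $M$ through its traces on folded round spheres, to build out of these traces a Tits-closed, Tits-convex, ``building-like'' subset of $\bdT X$, and then to recognize the full boundary via Leeb's characterization of spherical buildings and joins.

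\textbf{Finiteness of $\ell$.} Since $M \subsetneq \bdc X$, the action on $\bdc X$ is not minimal, so by the equivalence recalled in the introduction there is no rank one axis; in particular $X$ has no rank one isometry. Using this together with cocompactness and properness, one shows that $M$ must meet every folded round sphere $K$ (the flat half-plane or flat plane producing $K$ cannot be ``avoided'' by the $\Gamma$-orbit of a ray in a class of $M$), giving $\ell \ge 1$. For $\ell < \infty$: because $\dim \bdT X = 1$, $K$ is the image of a circle of circumference $2\pi$; if $M \cap K$ had a Tits-accumulation point, minimality of $M$ together with the $\Gamma$-dynamics on $\bdc X$ would force $M$ to contain a nondegenerate Tits arc, and then, propagating that arc under $\Gamma$ and taking Tits-closures in the one-dimensional boundary, force $M = \bdc X$, contradicting properness. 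Hence $M \cap K$ is finite. From now on fix an $M$ with $\ell$ minimal.

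\textbf{The configuration on $K$ and the small cases.} Next I would extract the combinatorics: using minimality of $\ell$ and invariance of $M$, show the $\ell$ points of $M \cap K$ cut $K$ into $\ell$ arcs of equal Tits-length $2\pi/\ell$, that the local ``fold'' of $K$ across any point of $M \cap K$ preserves $M \cap K$, and that the midpoints of ``opposite'' arcs are Tits-antipodal whenever an opposite arc exists --- automatic when $\ell$ is even, and supplied by the hypothesis $\diam \bdT X = \pi$ when $\ell$ is odd. When $\ell = 1$, the unique point of $M \cap K$ behaves like a $\Gamma$-fixed point and its Tits-antipode in $K$ is independent of $K$; this yields two points of $\bdT X$ at mutual Tits-distance $\pi$ with everything else at distance $\pi/2$ from both, i.e. $\bdT X$ is a suspension. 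When $\ell = 2$, the two points of $M \cap K$ are Tits-antipodal for every $K$; the resulting $\Gamma$-invariant family of antipodal pairs organizes $\bdT X$ into two ``hemisphere'' factors, exhibiting it as a nontrivial spherical join.

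\textbf{The general case $\ell \ge 3$ (the crux).} Let $B$ be the union of all folded round spheres meeting $M$ --- equivalently, the Tits-closed Tits-convex subset generated by $M$. The heart of the proof is to show that $B$, with apartments the folded round spheres and with chambers the arcs cut off by the points of $M$ (when $\ell$ is even) or the half-arcs cut off by those points together with their Tits-antipodes (when $\ell$ is odd), satisfies the axioms of a one-dimensional spherical building of type $I_2(m)$, with $m = \ell/2$ in the even case and $m = \ell$ in the odd case. This requires: (i) that any two points of $B$ lie on a common folded round sphere, proved using geodesics in the one-dimensional $\CAT(1)$ space $\bdT X$ and the arc-length computation above; (ii) that the folds at points of $M$ integrate to reflections generating a dihedral action of order $2m$ on each apartment --- this is where even $\ell$ suffices unconditionally while odd $\ell$ forces the use of $\diam \bdT X = \pi$ to locate the missing chamber walls as $\pi$-antipodes; and (iii) thickness, obtained from the $\Gamma$-orbit of an apartment through a fixed chamber. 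Step (ii), the parity dichotomy, is the main obstacle, with the verification of the apartment and exchange axioms close behind.

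\textbf{From $B$ to $\bdT X$.} Finally I would promote the structure: minimality of $M$ and cocompactness force $B$ to be Tits-dense, and $B$ is Tits-closed, so $B = \bdT X$ when $B$ is thick; if $B$ fails thickness --- every wall lying in a unique apartment --- the same analysis instead exhibits $\bdT X$ as a nontrivial spherical join. Leeb's theorem then upgrades ``building-like'' to ``spherical building.'' The case $\diam \bdT X > \pi$ never survives, since a spherical building or join has Tits-diameter at most $\pi$; this is precisely why no diameter hypothesis is needed when $\ell$ is even, the even parity itself forcing $\diam \bdT X = \pi$ along the way.
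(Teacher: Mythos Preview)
Your overall strategy --- building a convex, building-like subcomplex from the traces of $M$ on folded spheres --- matches the paper's approach in the odd case, but several steps are either wrong or skip the real difficulty.

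First, the even case in the paper proceeds by an entirely different and much shorter route that you miss: when $\ell$ is even and $M$ is uniformly spaced, $M$ itself is \emph{involutive} (every Tits-antipode of a point of $M$ again lies in $M$), and one shows directly that $\bdT X$ is geodesically complete using incompressible arcs of length $\pi/\ell$; then Lytchak's rigidity theorem for proper closed involutive subsets of a geodesically complete finite-dimensional $\CAT(1)$ space immediately yields that $\bdT X$ is a spherical building or a join. No building axioms are verified at all. Your direct approach via apartments and reflections might be workable in principle, but the two steps you flag as the crux --- that any two points of $B$ lie on a common folded round sphere, and that ``folds at points of $M$ integrate to reflections generating a dihedral action'' --- are both genuinely hard, and you give no mechanism for either.

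Second, your invocation of ``Leeb's theorem'' is a confusion: Leeb's theorem takes as \emph{input} that $\bdT X$ is already an irreducible spherical building and concludes that $X$ is a Euclidean building or symmetric space. It does not recognize $\bdT X$ as a building. The recognition tools the paper actually uses are Lytchak's involutive-set theorem (even case) and the Charney--Lytchak characterization of one-dimensional spherical buildings (odd case, once $\bdT X = \W$).

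Third, your claim that ``minimality of $M$ and cocompactness force $B$ to be Tits-dense'' is precisely what fails when $\ell$ is odd and $\diam(\bdT X) > \pi$: the paper's convex subcomplex $\W$ can be a \emph{proper} subset of $\bdT X$, with $\R$-trees of radius up to $\pi/\ell$ hanging off at points of $M$. The diameter hypothesis is needed to rule out these trees and conclude $\bdT X = \W$, not merely to ``locate missing chamber walls'' as you suggest. Your sketch does not account for this obstruction, and your final paragraph's assertion that even parity ``itself forces $\diam \bdT X = \pi$ along the way'' is true but is a consequence of the involutive-set argument, not an input to it.

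Finally, smaller issues: your finiteness argument (accumulation point $\Rightarrow$ arc $\Rightarrow$ $M = \bdc X$) is not valid as stated, since a closed subset of a circle can be infinite without containing an arc; the paper instead passes to a minimal subset of $\Centers^{\bd X}(M)$ and uses a counting argument on a folded circle. And for $\ell \le 2$ the paper simply applies the radius-rigidity splitting theorem (if $M \cap K$ lies in a proper subsphere of $K$ then $\bdT X$ is a join) rather than arguing dynamically about fixed points.
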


The proof of \thmref{MT} uses the Centers Lemma (\lemref{centers lemma}) from \cite{ricks-radius} along with a rigidity result (\thmref{lytchak rigidity}) of Lytchak \cite{lytchak05} about involutive sets in $\bdT X$.
The case $\ell$ is odd is the most complicated, and our proof involves constructing a large Tits-closed, convex subset of $\bdT X$, which has many properties of a spherical building.
A detailed description of this subset is given in \lemref{odd case}.

We arrive at the following comprehensive list of options for $\ell = \ell(\Gamma)$.

\begin{maintheorem}[\thmref{table}]	\label{MC3}
Let $\Gamma$ be a group acting geometrically on a proper $\CAT(0)$ space $X$ with $\dim(\bdT X) = 1$.
Let $K \subset \bdT X$ be a folded round sphere.
For each finite-index subgroup $\Gamma_0$ of $\Gamma$, let
$\ell(\Gamma_0) = \inf \abs{M \cap K}$,
where the infimum is taken over minimal nonempty, closed, $\Gamma_0$-invariant subsets $M$ of $\bdc X$.

The following table summarizes the complete situation (here $\min \ell(\Gamma_0)$ is taken over all finite-index subgroups $\Gamma_0$ of $\Gamma$):
\textnormal{
\begin{center}
\begin{tabular}{l l l}
\hline
$\bd X$ & possible $\ell(\Gamma)$ & $\min \ell(\Gamma_0)$ \\
\hline
circle & $1$, $2$, $3$, $4$, or $6$ & $1$ \\
suspension but not a circle & $1$ or $2$ & $1$ \\
spherical join but not a suspension & $2$ or $4$ & $2$ \\
irreducible spherical building & integers $\ge 3$ & $\ge 3$ \\
minimal & $\infty$ & $\infty$ \\
$\pi < \diam (\bdT X) \le \pi + \frac{\pi}{\ell(\Gamma)}$ & odd integers $\ge 3$ & odd $\ge 3$ \\
\hline
\end{tabular}
\end{center}} \end{maintheorem}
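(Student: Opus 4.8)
The plan is to read the table off \thmref{main theorem}, using two soft inputs --- that the spherical-join / building type of $\bdT X$ is unchanged when one passes to a finite-index subgroup, and that (cf.~\cite{ricks-radius}) minimality of a geometric action on $\bdc X$ likewise passes to finite-index subgroups --- together with a few classical facts: Kleiner's correspondence between $\dim\bdT X$ and flat ranks, the de~Rham--style splitting of $\CAT(0)$ spaces with reducible Tits boundary, minimality of a non-elementary hyperbolic group on its Gromov boundary, Bieberbach's theorem, and the crystallographic restriction in the plane. First I would settle the bookkeeping. Since $\dim\bdT X=1$ there is a $2$-flat, so a folded round sphere $K$ exists; being the folding of a round circle, it has infinitely many points. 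Hence $\ell(\Gamma_0)=\infty$ exactly when $\Gamma_0$ acts minimally on $\bdc X$ (the only minimal set is then $\bdc X$, and $\abs{\bdc X\cap K}=\abs K=\infty$), which disposes of the ``minimal'' row and its finite-index column. When $\Gamma_0$ is not minimal, some proper minimal $M$ exists and \thmref{main theorem}, applied to an $\ell(\Gamma_0)$-minimizing $M$, yields $1\le\ell(\Gamma_0)<\infty$ together with the structural trichotomy; so it remains to pin down the admissible finite values of $\ell$ for each shape of $\bdT X$ and to compute $\min_{\Gamma_0}\ell(\Gamma_0)$.

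If $\bdT X$ is an irreducible spherical building it is neither a suspension nor a spherical join, so the $\ell=1$ and $\ell=2$ clauses of \thmref{main theorem} are excluded; it has Tits diameter $\pi$, so the $\diam>\pi$ clause is excluded; and it carries a proper minimal set (its chamber set at infinity), so $\ell(\Gamma)$ is a finite integer $\ge 3$, with the same bound for every $\Gamma_0$. If $\bdT X=A*B$ is a spherical join that is not a suspension, then $\dim A=\dim B=0$, and neither factor being a single $S^0$, each is $0$-dimensional with at least three ideal points and hence (as the boundary of a non-elementary hyperbolic group) uncountable; by the splitting theorem, after passing to a finite-index subgroup $X$ splits as $X_A\times X_B$ and the group as $\Gamma_A\times\Gamma_B$, with each $X_i$ $\delta$-hyperbolic and each $\Gamma_i$ non-elementary hyperbolic, hence minimal on $\bdc X_i$. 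Therefore the minimal sets of the finite-index action on $\bdc X=\bdc X_A*\bdc X_B$ are exactly $A$, $B$, and --- if $\Gamma$ interchanges the factors --- $A\cup B$; since a one-dimensional round sphere in $A*B$ is a $4$-cycle with two poles on each side, these meet $K$ in $2$ or $4$ points, so $\ell(\Gamma)\in\{2,4\}$ and $\min_{\Gamma_0}\ell(\Gamma_0)=2$. The suspension-but-not-a-circle case, $\bdT X=S^0*B$ with $\abs B\ge 3$ (again forcing $B$ uncountable), runs the same way with $X=\R\times X_B$: the two ends of the $\R$-factor are fixed by a finite-index subgroup but may be swapped by $\Gamma$, giving $\ell(\Gamma)\in\{1,2\}$ and $\min_{\Gamma_0}\ell(\Gamma_0)=1$.

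The circle case is where genuinely new input is needed and will, I expect, be the main obstacle, since a circle is simultaneously a suspension, a spherical join, and of diameter $\pi$, so \thmref{main theorem} imposes nothing on $\ell$. The key step is to show that a one-dimensional Tits boundary that is a topological circle must be a round circle of circumference $2\pi$ --- this follows because a $2$-flat gives a Tits-isometric round $S^1\subset\bdT X$, while a non-constant loop in a circle of circumference $C$ has length $\ge C$, forcing $C=2\pi$ --- and then that a round-circle Tits boundary forces $\Gamma$ to be virtually $\Z^2$ (via the Flat Torus Theorem applied to a maximal flat, together with the rank bound). Then $\Gamma$ acts on $\bdc X=S^1$ through its finite point group $P\subset O(2)$, which stabilizes a lattice, so by the crystallographic restriction $P$ is cyclic or dihedral built from rotations of order $k\in\{1,2,3,4,6\}$ only; the minimal sets in $S^1$ are the $P$-orbits, the smallest of which has size exactly $k$, whence $\ell(\Gamma)=k\in\{1,2,3,4,6\}$, while passing to the translation subgroup gives $\min_{\Gamma_0}\ell(\Gamma_0)=1$. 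Finally, the row $\pi<\diam(\bdT X)\le\pi+\pi/\ell(\Gamma)$ is precisely the leftover clause of \thmref{main theorem}: a suspension, spherical join, or spherical building all have diameter $\pi$, so $\diam>\pi$ forces $\ell$ to be the odd integer $\ge 3$ of that clause, the diameter bound being the angular-defect estimate built into the building-like set of \lemref{odd case}; as none of this sees finite index, $\min_{\Gamma_0}\ell(\Gamma_0)$ is again an odd integer $\ge 3$. Collecting the six cases yields the table.
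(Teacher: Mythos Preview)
Your approach is essentially correct and considerably more detailed than the paper's own argument, which consists only of the short paragraph preceding the theorem statement: the paper merely observes that in the spherical-join and suspension cases one may pass to an index-$\le 2$ subgroup that does not permute the factors, thereby producing a closed invariant set meeting $K$ in at most $2$ (resp.\ $1$) points, and then declares the table proved by reading off \thmref{main theorem}. In particular, the paper offers no justification at all for the restriction $\ell(\Gamma)\in\{1,2,3,4,6\}$ in the circle row; your invocation of Bieberbach and the crystallographic restriction is exactly the right ingredient there and genuinely fills a gap the paper leaves open. (Your route to ``$\Gamma$ is virtually $\Z^2$'' via the Flat Torus Theorem is a bit oblique; it is cleaner to observe that $S^1=S^0*S^0$ is itself a spherical join, split $X$ accordingly, and note that each factor is two-ended, hence virtually $\Z$.)

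One small inaccuracy worth flagging: in the spherical-join and suspension rows you assert that the minimal closed invariant sets are \emph{exactly} the join factors $A$, $B$ (or the suspension poles), possibly together with $A\cup B$. This is false---for each $t\in(0,\pi/2)$ the ``latitude'' set $M_t=\{(a,b,t):a\in A,\ b\in B\}$ is also a closed minimal set for the product action, and there are analogous level sets in the suspension case. This does not damage your conclusions, since $\abs{M_t\cap K}=4$ (resp.\ $2$) so the infimum defining $\ell$ is still attained at a factor; but the argument ruling out $\ell(\Gamma)=3$ in the join-not-suspension row should go through a complete enumeration of minimal sets (factors, their swap-union, and the latitude sets together with their swap-unions), all of which meet $K$ in an even number of points. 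With that correction, your proof stands and is strictly more complete than the paper's.
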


Note that no example of the last case ($\pi < \diam (\bdT X) \le \pi + \frac{\pi}{\ell(\Gamma)}$) is known, and (at least if $X$ is geodesically complete) would provide a counterexample to the Rank Rigidity Conjecture stated above.
Also note that the bound $\diam (\bdT X) \le \pi + \frac{\pi}{3}$ coincides with Guralnik and Swenson's bound \cite{gs} for $\dim (\bdT X) = 1$.

\section{The Spaces}

\label{spaces}

\subsection{CAT(1) and CAT(0) Spaces}

A metric space $X$ is called \defn{proper} if closed balls in $X$ are compact.

A \defn{geodesic} in a metric space $X$ is an isometric embedding $\R \to X$; a \defn{geodesic ray} is an isometric embedding $[0,\infty) \to X$; and a \defn{geodesic arc} (or \defn{geodesic segment}) is an isometric embedding $[a,b] \to X$ for some $a < b$ in $\R$.
A metric space $X$ is called \defn{geodesic} if every pair of distinct points $x,y \in X$ is connected by a geodesic in $X$; if every such geodesic is unique, $X$ is \defn{uniquely geodesic}.
A metric space is \defn{$\pi$-geodesic} if every pair of distinct points with distance $< \pi$ is connected by a geodesic.
When the geodesic arc between $x,y \in X$ is unique, we will denote it by $[x,y]$.

A metric space is \defn{geodesically complete} if every geodesic arc can be extended to a locally isometric embedding $\R \to X$.

A \defn{$\CAT(0)$} space is a uniquely geodesic metric space such that every geodesic triangle $\triangle (x,y,z)$ is thinner than the corresponding comparison triangle $\cl \triangle (x,y,z)$ in Euclidean $\R^2$.
More precisely, let $x,y,z \in X$ and find $\bar x, \bar y, \bar z \in \R^2$ such that $d(\bar x, \bar y) = d(x,y)$, $d(\bar x, \bar z) = d(x,z)$, and $d(\bar y, \bar z) = d(y,z)$; then $\cl \triangle (x,y,z) = \triangle (\bar x, \bar y, \bar z)$.
Then $\triangle (x,y,z)$ is thinner than $\cl \triangle (x,y,z)$ if for every $p \in [x,y]$ and $q \in [x,z]$, the corresponding points $\bar p \in [\bar x, \bar y]$ and $\bar q \in [\bar x, \bar z]$ with $d(\bar p, \bar x) = d(p,x)$ and $d(\bar q, \bar x) = d(q,x)$.
So $X$ is $\CAT(0)$ if for every $x,y,z \in X$, the triangle $\triangle (x,y,z)$ is thinner than $\cl \triangle (x,y,z)$.

A \defn{$\CAT(1)$} space is a $\pi$-geodesic metric space such that every geodesic triangle with perimeter $< 2\pi$ is thinner than the corresponding comparison triangle in $S^2$, the Euclidean $2$-sphere with the standard metric of constant curvature $1$.

The following facts about $\CAT(1)$ spaces are standard (\cite{bridson} is a nice reference).

\begin{lemma}			\label{unique geodesics}
Every pair of distinct points $x,y$ of distance $< \pi$ in a $\CAT(1)$ space $Y$ is joined by a unique geodesic arc $[x,y]$ in $Y$. \end{lemma}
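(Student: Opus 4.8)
The plan is to obtain existence for free from the definition and to reduce uniqueness to the observation that a suitable comparison triangle degenerates.

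Existence is immediate: a $\CAT(1)$ space is by definition $\pi$-geodesic, so the points $x,y$ with $d(x,y) =: \ell < \pi$ are joined by at least one geodesic arc. For uniqueness, I would suppose $\gamma_1,\gamma_2\colon[0,\ell]\to Y$ are geodesic arcs from $x$ to $y$, fix $t\in(0,\ell)$, and set $p=\gamma_1(t)$. Form the geodesic triangle $\triangle(x,y,p)$ whose sides are $\gamma_1|_{[0,t]}$ (from $x$ to $p$), $\gamma_1|_{[t,\ell]}$ (from $p$ to $y$), and $\gamma_2$ (from $x$ to $y$). Its perimeter is $t+(\ell-t)+\ell=2\ell<2\pi$, so the $\CAT(1)$ condition applies.

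The key point is that the comparison triangle $\bar\triangle\subset S^2$ has side lengths $t$, $\ell-t$, and $\ell$, and since $t+(\ell-t)=\ell$ it is degenerate: the vertices $\bar x,\bar p,\bar y$ lie on a common minimizing geodesic segment of $S^2$ (which exists because $\ell<\pi$), with $\bar p$ the point at distance $t$ from $\bar x$ along $[\bar x,\bar y]$. Now $q:=\gamma_2(t)$ lies on the side $[x,y]=\gamma_2$, so its comparison point $\bar q\in[\bar x,\bar y]$ satisfies $d(\bar x,\bar q)=d(x,q)=t$, whence $\bar q=\bar p$. Applying the thinness inequality at the vertex $x$, with the two sides $[x,p]$ and $[x,y]$ emanating from it, gives $d(\gamma_1(t),\gamma_2(t))=d(p,q)\le d_{S^2}(\bar p,\bar q)=0$. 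Thus $\gamma_1(t)=\gamma_2(t)$ for every $t\in(0,\ell)$, and trivially at the endpoints, so $\gamma_1=\gamma_2$.

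I expect the only step needing a little care is justifying that this degenerate configuration is a legitimate $S^2$-comparison triangle and correctly locating the comparison points of $p$ and $q$ on it; once the triangle is seen to collapse onto a single segment, the $\CAT(1)$ inequality finishes the argument directly, with no induction or limiting argument required.
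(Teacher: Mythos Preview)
Your argument is correct and is the standard proof. Note that the paper itself does not supply a proof of this lemma; it is stated among ``standard facts about $\CAT(1)$ spaces'' with a reference to Bridson--Haefliger, so there is no paper proof to compare against beyond observing that your degenerate-comparison-triangle argument is exactly the one found in that reference.
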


\begin{lemma}			\label{local geodesics}
Every locally geodesic arc of length $\le \pi$ in a $\CAT(1)$ space is geodesic. \end{lemma}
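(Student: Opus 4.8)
The plan is the standard connectedness-plus-comparison argument. Write $c\colon[0,\ell]\to Y$ for the arc, parametrized by arc length, with $\ell\le\pi$; the goal is $d(c(0),c(\ell))=\ell$. Consider
\[
A=\setp{t\in[0,\ell]}{d(c(0),c(t))=t}.
\]
Since $c$ has unit speed, $d(c(0),c(t))\le t$ for every $t$, so $A$ is the zero set of the continuous nonnegative function $t\mapsto t-d(c(0),c(t))$ and is therefore closed; and $t\in A$ forces $c\res{[0,s]}$ to be a geodesic for every $s\le t$, so $A$ is an initial segment of $[0,\ell]$, and it is nonempty---indeed $[0,\epsilon]\subseteq A$ for small $\epsilon>0$---because $c$ is locally geodesic. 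Writing $t_0=\sup A$, we thus have $t_0\in A$, $A=[0,t_0]$, and $t_0>0$; it remains only to show $t_0=\ell$.

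Suppose instead $t_0<\ell$, so in particular $t_0<\pi$. Choose $\epsilon>0$ with $\epsilon<\min\set{t_0,\ \ell-t_0,\ \pi-t_0}$ and small enough that $c\res{[t_0-\epsilon,t_0+\epsilon]}$ is a geodesic, and set $p=c(0)$, $q'=c(t_0-\epsilon)$, $q=c(t_0)$, $r=c(t_0+\epsilon)$. Then $d(p,q')=t_0-\epsilon$, $d(p,q)=t_0$, and $q$ is the midpoint of the geodesic $[q',r]$, so $d(q',q)=d(q,r)=\epsilon$. Assume for contradiction that $d(p,r)<t_0+\epsilon$. Since $d(p,r)<t_0+\epsilon<\pi$, \lemref{unique geodesics} provides the geodesic $[p,r]$, and $\triangle(p,q',r)$ has perimeter $<2(t_0+\epsilon)<2\pi$, so it has a comparison triangle $\cl\triangle(\bar p,\bar q',\bar r)$ in $S^2$. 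Because $d(\bar p,\bar r)=d(p,r)<t_0+\epsilon=d(p,q')+d(q',r)$, this comparison triangle is nondegenerate at $\bar q'$, so its angle $\theta$ there satisfies $\theta<\pi$. Letting $\bar q\in[\bar q',\bar r]$ be the comparison point of $q$ (so $d(\bar q',\bar q)=\epsilon$), the spherical law of cosines gives
\[
\cos d(\bar p,\bar q)=\cos(t_0-\epsilon)\cos\epsilon+\sin(t_0-\epsilon)\sin\epsilon\cos\theta>\cos t_0,
\]
the strict inequality holding because $\sin(t_0-\epsilon)\sin\epsilon>0$ and $\theta<\pi$ (the case $\theta=\pi$ returns exactly $\cos t_0$). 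Hence $d(\bar p,\bar q)<t_0$; but the $\CAT(1)$ inequality gives $d(p,q)\le d(\bar p,\bar q)$, so $t_0<t_0$, a contradiction. Therefore $d(p,r)=t_0+\epsilon$, i.e.\ $t_0+\epsilon\in A$, contradicting $t_0=\sup A$. So $A=[0,\ell]$ and $c$ is a geodesic.

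I do not anticipate a real obstacle: this is a routine comparison argument, essentially the $\CAT(\kappa)$ version found in \cite{bridson}. The one point deserving attention is the endpoint case $\ell=\pi$, but it causes no trouble, since the extension step only ever promotes parameters strictly below $\pi$ into $A$, and $t_0=\sup A<\ell\le\pi$ already forces $t_0<\pi$ and, after shrinking $\epsilon$, keeps the perimeter of $\triangle(p,q',r)$ strictly below $2\pi$. One could alternatively run the argument through the first variation formula---$c$ passes straight through $c(t_0)$, so the geodesic from $c(t_0)$ to $c(0)$ leaves $c(t_0)$ at angle $\pi$ from the forward direction of $c$, whence the right-hand derivative of $t\mapsto d(c(0),c(t))$ at $t_0$ equals $1$---but the law-of-cosines computation above is the most self-contained route.
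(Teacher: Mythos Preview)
Your proof is correct. The paper does not actually prove this lemma; it is stated there as one of several standard facts about $\CAT(1)$ spaces, with \cite{bridson} given as a reference. Your argument is the standard connectedness-plus-comparison proof (essentially that of \cite[Chapter~II.1]{bridson}): show that the set of parameters up to which $c$ is globally geodesic is open and closed in $[0,\ell]$, the key step being that a strict inequality $d(p,r)<t_0+\epsilon$ forces the comparison angle at $\bar q'$ to be $<\pi$, whence the law of cosines and the $\CAT(1)$ inequality give $d(p,q)<t_0$, a contradiction. The minor bookkeeping---$\epsilon<t_0$ to ensure $\sin(t_0-\epsilon)>0$, and $\epsilon<\pi-t_0$ to keep the perimeter below $2\pi$---is handled correctly, and your remark about the endpoint case $\ell=\pi$ is apt. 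There is nothing to add.
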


A subset $C$ of a $\CAT(1)$ space is called \defn{convex} if every geodesic in $Y$ joining pairs of points in $C$ lies completely in $C$.

\begin{lemma}			\label{convex balls}
Every metric ball of radius $< \frac{\pi}{2}$ in a $\CAT(1)$ space is convex. \end{lemma}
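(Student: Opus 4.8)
The plan is to reduce \lemref{convex balls} to the analogous statement in the model space $S^2$ via the $\CAT(1)$ comparison inequality, and then to verify that model statement by a short argument in $\R^3$.

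First I would fix a center $c$ and a radius $r<\frac{\pi}{2}$ and write $B$ for the ball $B(c,r)$; the argument is identical whether $B$ is open or closed, so I leave this unspecified. Given $x,y\in B$, we have $d(x,y)\le d(x,c)+d(c,y)<2r<\pi$, so by \lemref{unique geodesics} there is a unique geodesic arc $[x,y]$; moreover any geodesic arc joining $x$ to $y$ is an isometric embedding of an interval of length $d(x,y)$, hence minimizing, hence equal to $[x,y]$. So it suffices to show $[x,y]\subseteq B$. Set $r'=\max\{d(c,x),d(c,y)\}$, so $r'<\frac{\pi}{2}$, with $r'<r$ when $B$ is open and $r'\le r$ when $B$ is closed; I will show $d(c,p)\le r'$ for every $p\in[x,y]$, which places $p$ in $B$ in either case.

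To prove this, apply the $\CAT(1)$ axiom to the geodesic triangle $\triangle(c,x,y)$: each of its sides has length $<\pi$ and its perimeter is $\le 4r'<2\pi$, so there is a comparison triangle $\triangle(\bar c,\bar x,\bar y)$ in $S^2$, and for $p\in[x,y]$ with comparison point $\bar p\in[\bar x,\bar y]$ the thinness condition gives $d(c,p)\le d_{S^2}(\bar c,\bar p)$. It therefore remains to check: in $S^2$, if $\bar x$ and $\bar y$ lie in the closed ball of radius $r'<\frac{\pi}{2}$ about $\bar c$, then so does every point of the geodesic between them (which is unique and minimizing, since $d_{S^2}(\bar x,\bar y)=d(x,y)<\pi$). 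For this I would realize $S^2$ as the unit sphere in $\R^3$ with $\langle u,v\rangle=\cos d_{S^2}(u,v)$, so the ball is $\{v\in S^2:\langle v,\bar c\rangle\ge\cos r'\}$ with $\cos r'>0$, and the minimizing geodesic from $\bar x$ to $\bar y$ is the radial projection $z\mapsto z/\abs{z}$ of the Euclidean chord $[\bar x,\bar y]$ (this chord lies in $\Span\{\bar x,\bar y\}$ and sweeps the short great-circle arc, as each of its points is a nonnegative combination of $\bar x,\bar y$). For $z$ on this chord, convexity of the unit ball gives $\abs{z}\le 1$, and convexity of the half-space $\{\langle\,\cdot\,,\bar c\rangle\ge\cos r'\}$ gives $\langle z,\bar c\rangle\ge\cos r'>0$; hence $z\ne 0$ and $\langle z/\abs{z},\bar c\rangle=\langle z,\bar c\rangle/\abs{z}\ge\langle z,\bar c\rangle\ge\cos r'$, so $z/\abs{z}$ lies in the ball, as needed.

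I do not expect a genuine obstacle; the only points needing a little care are confirming that $\triangle(c,x,y)$ is admissible for the $\CAT(1)$ axiom (perimeter $<2\pi$, each side $<\pi$) and the elementary identification of the minimizing arc in $S^2$ with the radial projection of the chord. The latter can be replaced, if preferred, by the classical computation that $t\mapsto\langle\bar c,\gamma(t)\rangle$ along a unit-speed great circle $\gamma$ equals $R\cos(t-t_0)$, for which a value below $\cos r'$ at an interior point of an arc of length $<\pi$ whose endpoint values are both $\ge\cos r'$ would force the arc length to exceed $\pi$.
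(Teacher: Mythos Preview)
The paper does not prove \lemref{convex balls}; it is listed among the ``standard'' facts about $\CAT(1)$ spaces with a reference to \cite{bridson}, and no argument is given. Your proof is correct and is precisely the standard comparison argument one finds in that reference: reduce to the comparison triangle in $S^2$ (legitimate because the perimeter is $\le 4r'<2\pi$), then use that spherical caps of radius $<\frac{\pi}{2}$ are geodesically convex. There is nothing to compare against in the paper itself.
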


\subsection{Angles and Boundaries}

Let $Y$ be a $\CAT(1)$ space $Y$ and $p \in Y$.  Given $x,y \in Y \setminus \set{p}$, the angle $\angle_p (x,y)$ is the limit of the corresponding angle in the comparison triangle $\cl\triangle(p,x',y')$ in $S^2$ (or $\R^2$, the limits are equal) as $x',y' \to p$ with $x' \in (p,x]$ and $y' \in (p,y]$.

Let $X$ be a $\CAT(0)$ space and fix $x_0 \in X$.
The \defn{boundary}%
, written $\bd X$, of $X$ is the set of all geodesic rays based at $x_0$.
The boundary comes naturally equipped with two topologies.
The \defn{cone topology} on $\bd X$ is the compact-open topology; for $X$ proper, this is compact metrizable.
Another topology on $\bd X$ comes from the \defn{angle metric}, defined by $\angle (p,q) = \sup_{x \in X} \angle_x (p,q)$ for all $p,q \in \bd X$.
This metric induces a finer topology than the cone topology.

The angle metric on $\bd X$ is complete and $\CAT(1)$, but distances are bounded above by $\pi$.
The \defn{Tits metric} is the path metric $\dT$ on $\bd X$ coming from the angle metric; the Tits metric is also complete $\CAT(1)$, and we have the formula $\angle (p,q) = \min \set{\pi, \dT(p, q)}$ for all $p,q \in \bd X$.

An important fact is that the Tits metric is lower semicontinuous with respect to the cone topology; that is, if $p_n \to p \in \bd X$ and $q_n \to q \in \bd X$ under the cone topology, then $\dT(p,q) \le \liminf \dT (p_n, q_n)$.

We adopt the following.

\begin{convention}
When discussing the boundary of a $\CAT(0)$ space, topological properties such as \emph{closed} will refer by default to the cone topology, whereas metric properties will refer to the Tits metric.
Occasionally, to emphasize the cone topology, we write $\bdc X$; for the Tits metric, $\bdT X$; otherwise simply $\bd X$.
\end{convention}

\subsection{Dimension}

Let $Y$ be a $\CAT(1)$ space.
For each $p \in Y$, the \defn{link} $\Lk (p)$ of $p$ in $Y$ is the completion of the set of geodesic germs from $p$ in $Y$, endowed with the angle metric $\angle_p$.
The space $\Lk (p)$ is also $\CAT(1)$.

Following Kleiner \cite{kleiner}, the \defn{geometric dimension} $\dim Y$ of a $\CAT(1)$ space $Y$ is zero if $Y$ is discrete, one if the link of every point in $Y$ is discrete (i.e. geometric dimension zero), two if the link of every point in $Y$ is geometric dimension at most one, and so forth, with $\dim(Y) = \infty$ for the remaining spaces $Y$.
Our main focus will be on $\CAT(1)$ spaces of geometric dimension one.

\begin{convention}
For a $\CAT(1)$ space $Y$, we will write \defn{dimension} to mean the geometric dimension of $Y$, also denoted $\dim Y$.
For a CAT($0$) space $X$, we will write $\dim(\bdT X)$ to mean the geometric dimension of the Tits boundary of $X$.
\end{convention}

If $Y$ is a $\CAT(1)$ space with $\dim Y = 1$ then every metric ball of radius $< \pi$ is an $\R$-tree.
It follows that every connected $\CAT(1)$ space $Y$ such that $\dim Y = 1$ and $H_1 (Y) = 0$ (i.e. zero first Betti number) is an $\R$-tree.
We also obtain the following.

\begin{lemma}			\label{locally injective paths}
Let $Y$ be a $\CAT(1)$ space with $\dim(Y) = 1$.  Every locally injective path in $Y$ can be reparametrized to be locally geodesic. \end{lemma}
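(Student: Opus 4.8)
The plan is to show that the arc-length reparametrization of $c$ does the job, the point being that small metric balls in $Y$ are $\R$-trees and that an injective path in an $\R$-tree is forced to be a reparametrized geodesic. Write $c \colon I \to Y$ for the given locally injective path, where $I \subseteq \R$ is an interval (if the domain is a circle, the same argument applies with arc length measured around the loop), and fix $t_0 \in I$.

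First I would pin down the local structure. By continuity of $c$, local injectivity, and \lemref{convex balls}, each $t \in I$ has an open neighborhood $J_t \subseteq I$ such that $c|_{J_t}$ is injective and $c(J_t)$ lies in a convex metric ball $B$ of radius $< \frac{\pi}{2}$; since $\dim Y = 1$, such a ball $B$ is an $\R$-tree. For any compact $[\alpha,\beta] \subseteq J_t$, the map $c|_{[\alpha,\beta]}$ is a continuous injection of a compact interval, hence a homeomorphism onto its image, which is an arc in the $\R$-tree $B$ with endpoints $c(\alpha)$ and $c(\beta)$; in an $\R$-tree this arc can only be the geodesic segment $[c(\alpha),c(\beta)]$, and by \lemref{unique geodesics} this is also the geodesic in $Y$. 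Thus $c|_{[\alpha,\beta]}$ is a monotone homeomorphism onto $[c(\alpha),c(\beta)]$. Two consequences: the length of $c$ on $[\alpha,\beta]$ equals $d(c(\alpha),c(\beta)) < \pi$ (the images of partition points occur in geodesic order, so consecutive distances telescope), so by a covering argument $c$ is rectifiable on every compact subinterval of $I$; and distinct parameters in $[\alpha,\beta]$ have images at positive distance.

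Next I would introduce the arc-length function $s \colon I \to \R$, with $s(t) = \length(c|_{[t_0,t]})$ for $t \ge t_0$ and $s(t) = -\length(c|_{[t,t_0]})$ for $t \le t_0$, so that $s(v) - s(u) = \length(c|_{[u,v]})$ for all $u < v$ in $I$. This increment equals $d(c(u),c(v))$ once $u,v$ are close, so $s$ is continuous; and it is \emph{strictly} increasing, since local injectivity makes $c$ nonconstant, hence of positive length, on every nondegenerate subinterval. Therefore $s$ is a homeomorphism of $I$ onto an interval $I' = s(I)$, and $\tilde c = c \circ s^{-1}$ is a reparametrization of $c$. To check $\tilde c$ is locally geodesic, take $u = s(t) \in I'$ and choose $[\alpha,\beta] \subseteq J_t$ with $\alpha < t < \beta$: the restriction $\tilde c|_{[s(\alpha),s(\beta)]}$ is a monotone, arc-length parametrization of the geodesic segment $[c(\alpha),c(\beta)]$, hence an isometric embedding, so $\tilde c$ restricts to a genuine geodesic arc on the neighborhood $(s(\alpha),s(\beta))$ of $u$.

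I expect the only real subtlety to be the globalization: checking that $s$ is genuinely a homeomorphism (continuity together with the strict monotonicity extracted from local injectivity) and that the locally monotone arc-length parametrization of $[c(\alpha),c(\beta)]$ is an isometric embedding \emph{into $Y$} rather than merely into the ambient ball --- which is where convexity of the ball (\lemref{convex balls}) and uniqueness of geodesics (\lemref{unique geodesics}) combine to identify the $\R$-tree geodesic with the $Y$-geodesic. The underlying fact that an injective path in an $\R$-tree is the geodesic between its endpoints is standard, but it is the crux of the argument, and the one place where the hypothesis $\dim Y = 1$ enters essentially (via the $\R$-tree structure of small balls); \lemref{local geodesics} is not needed here.
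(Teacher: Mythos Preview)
Your argument is correct and is exactly the implicit proof the paper has in mind: the paper does not spell out a proof of this lemma, but states it immediately after observing that every metric ball of radius $<\pi$ in $Y$ is an $\R$-tree, and your proposal simply unpacks that observation---an injective path in an $\R$-tree is a reparametrized geodesic, and the arc-length reparametrization globalizes these local pieces. The only cosmetic difference is that you work with balls of radius $<\tfrac{\pi}{2}$ via \lemref{convex balls}, whereas the paper already grants that balls of radius $<\pi$ are $\R$-trees; either suffices.
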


Combining this with \lemref{local geodesics}, we obtain the following result.

\begin{corollary}		\label{short paths}
Let $A$ be any subset of a $\CAT(1)$ space $Y$ with $\dim(Y) = 1$.  Then every path in $A$ of length $\le \pi$ can be straightened to a geodesic arc in $A$. \end{corollary}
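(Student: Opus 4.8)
The plan is to replace $c$ by a length-minimizing path with the same image, to observe that such a path is locally injective, and then to apply \lemref{locally injective paths} and \lemref{local geodesics}. Write $p=c(0)$ and $q=c(1)$, and set $K=c([0,1])\subseteq A$, a compact connected subset of $Y$; we may assume $p\neq q$, as otherwise there is nothing to prove. Let $L=\length(c)\le\pi$.

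First I would produce a path of minimal length inside $K$. Let $\ell_0$ be the infimum of the lengths of paths in $K$ from $p$ to $q$, so $0<\ell_0\le L\le\pi$. Choose a minimizing sequence of such paths, and reparametrize each proportionally to arc length on $[0,1]$; discarding finitely many terms, we obtain a uniformly Lipschitz family of maps $[0,1]\to K$, and since $K$ is compact, Arzel\`a--Ascoli yields a subsequence converging uniformly to a path $\gamma\colon[0,1]\to K$ from $p$ to $q$. Because length is lower semicontinuous under uniform convergence, $\length(\gamma)\le\ell_0$, and the definition of $\ell_0$ forces $\length(\gamma)=\ell_0$.

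Next I would verify that $\gamma$, reparametrized by arc length as $\bar\gamma\colon[0,\ell_0]\to K$, is locally injective. If instead $\bar\gamma(s)=\bar\gamma(t)$ for some $s<t$, then replacing $\bar\gamma|_{[s,t]}$ by the constant path at that point gives a path in $K$ from $p$ to $q$ of length $\ell_0-(t-s)<\ell_0$, contradicting minimality; hence $\bar\gamma$ is in fact injective. By \lemref{locally injective paths}, $\bar\gamma$ may be reparametrized to a locally geodesic path $\sigma$ in $Y$; this reparametrization alters neither the image (still contained in $K\subseteq A$) nor the length (still $\le\pi$) and keeps the endpoints $p,q$. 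By \lemref{local geodesics}, $\sigma$ is then a geodesic arc, so it is the desired straightening of $c$ within $A$.

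The one genuinely analytic ingredient here is the existence of the minimizing path $\gamma$ via the compactness of $K$ and lower semicontinuity of length; this is the step I expect to require the most care, while everything after it is an immediate consequence of the two cited lemmas. A routine caveat, which one disposes of at the outset, is the degenerate case $p=q$ and, relatedly, the convention that a geodesic arc has distinct endpoints.
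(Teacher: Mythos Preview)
Your proposal is correct and follows the paper's approach of combining \lemref{locally injective paths} with \lemref{local geodesics}; the paper's proof is the single sentence preceding the corollary, and your length-minimization argument supplies the omitted step of passing from an arbitrary path in $A$ to a locally injective one with image still in $A$ and length still $\le \pi$.
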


Kleiner showed \cite[Theorem C]{kleiner} that if $X$ is a proper $\CAT(0)$ space admitting a cocompact action by isometries, then $\dim(\bdT X) < \infty$.

\begin{definition}
Let $Y$ be a $\CAT(1)$ space.
Call a subset $K \subset Y$ a \defn{sphere} if it is isometric (under the Tits metric) to a standard Euclidean sphere of radius one, endowed with the angle metric.  Call a sphere $K \subset Y$ \defn{round} if $\dim K = \dim Y$. \end{definition}

Kleiner proved \cite[Theorem C]{kleiner} that round spheres exist in the Tits boundary $\bdT X$ of $X$ whenever $X$ is proper $\CAT(0)$ admitting a cocompact action by isometries.
Bennett, Mooney, and Spatzier \cite[Corollary 2.5]{bms} proved the following.

\begin{lemma}			\label{density of round spheres}
Let $\G$ be a group acting geometrically on a proper $\CAT(0)$ space $X$, and let $K \subset \bdT X$ be a round sphere.  Then $\G K$ is dense in $\bdc X$. \end{lemma}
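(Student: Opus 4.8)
The plan is to deduce the lemma from the assertion that the cone-closure $\overline{\Gamma K}$ equals all of $\bdc X$; as this set is automatically closed and $\Gamma$-invariant, it suffices to put an arbitrary $\xi \in \bd X$ into it. The first ingredient is structural. Since $K$ is a round sphere of dimension $n = \dim(\bdT X)$, it is Tits-convex: two of its points at Tits distance $< \pi$ are joined inside $K$ by its round-metric geodesic, which is a path of the same length in $\bdT X$, hence the unique geodesic of $\bdT X$ between them by \lemref{unique geodesics}. Since $n$ is also the maximal dimension of a flat in $X$, the work of Kleiner \cite{kleiner} and Leeb \cite{leeb} identifies $K$ as the Tits boundary of an $(n{+}1)$-dimensional flat $F \subseteq X$ --- concretely, $F$ is the union of the geodesic lines of $X$ whose endpoint pairs are antipodal pairs of $K$. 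Fix a basepoint $o \in F$, so that $K = \bd F$ and $\bd(\gamma F) = \gamma K$ for every $\gamma \in \Gamma$.

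Now fix $\xi \in \bd X$ and let $\rho$ be the geodesic ray from $o$ to $\xi$. Cocompactness makes $\Gamma o$ --- hence the union $\Gamma F$ of all translates of $F$ --- $D$-dense in $X$ for some $D$. The key claim is a two-point shadowing statement: for every $T > 0$ there is $\gamma_T \in \Gamma$ with $d(o, \gamma_T F) \le D$ and $d(\rho(T), \gamma_T F) \le D$ simultaneously. Granting it, let $\bar o_T$ and $\overline{\rho(T)}$ be the nearest-point projections of $o$ and $\rho(T)$ onto the convex set $\gamma_T F$; by convexity of the $\CAT(0)$ metric the geodesic segment $[\bar o_T, \overline{\rho(T)}] \subseteq \gamma_T F$ stays within $O(D)$ of $\rho([0,T])$, and extending it past $\overline{\rho(T)}$ inside the flat $\gamma_T F$ produces a ray $\sigma_T$ with $\sigma_T(0) = \bar o_T$, with $d(\sigma_T(t), \rho(t)) = O(D)$ for $t \in [0,T]$, and with endpoint $\eta_T := \sigma_T(\infty) \in \gamma_T K$. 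Re-basing $\sigma_T$ at $o$ gives a ray $c_T$ from $o$ to $\eta_T$ with $d(c_T(t), \sigma_T(t)) \le D$ for all $t$; hence $t \mapsto d(c_T(t), \rho(t))$ is a convex function that vanishes at $0$ and is $O(D)$ at $t = T$, so it is at most a fixed multiple of $(t/T)\,D$ throughout $[0,T]$. Thus $\eta_T$ lies in ever-smaller cone-neighborhoods of $\xi$ as $T \to \infty$, which gives $\xi \in \overline{\Gamma K}$.

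The main obstacle is the two-point shadowing claim: producing, for each large $T$, a single translate of $F$ that comes within $D$ of the two far-apart points $o$ and $\rho(T)$ at once. Coarse density of $\Gamma F$ supplies translates close to each of the two points separately --- indeed $F$ itself contains $o$ --- and the real content is to ``transport'' such a flat along $\rho$. Because the family $\{\gamma F : \gamma \in \Gamma\}$ need not be locally finite, one cannot simply pigeonhole among finitely many translates covering the segment $\rho([0,T])$; instead one argues continuously along $\rho$, exploiting that the translates of $F$ coming within $D$ of a given point occur in essentially every direction, so that one can repeatedly pass to one whose nearest-point projection of $\rho$ keeps pace. Carrying this out carefully, and checking that the relevant limiting configurations remain genuine top-dimensional flats, is where essentially all the difficulty of the lemma resides.
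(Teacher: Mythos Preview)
The paper does not prove this lemma; it is quoted as \cite[Corollary 2.5]{bms}. So there is no ``paper's proof'' to compare against, only the original source.

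As for your attempt: you have correctly isolated the heart of the matter and then explicitly declined to prove it. Your ``two-point shadowing'' claim---that for each $T$ there is a single translate $\gamma_T F$ within $D$ of both $o$ and $\rho(T)$---is the entire content of the lemma, and your final paragraph openly concedes that ``carrying this out carefully \dots\ is where essentially all the difficulty of the lemma resides.'' The sketch you offer in its place (``argue continuously along $\rho$, exploiting that the translates of $F$ coming within $D$ of a given point occur in essentially every direction'') is not an argument: nothing you have said explains why translates near a point should point in all directions, nor how one ``passes'' from one translate to another while keeping both endpoints controlled. Coarse density of $\Gamma F$ is a one-point statement and does not by itself upgrade to a two-point statement with the same constant $D$; in general one expects at best a larger, $T$-independent constant, and establishing even that requires a genuine compactness/limiting argument (e.g.\ extracting a limit flat through a subsequential limit of $\gamma_n^{-1}\rho$ and then pulling back), which you have not supplied.

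A smaller issue: your identification of $K$ with $\partial F$ for a genuine flat $F\subset X$ does follow from Leeb's work in the cocompact setting, but your description of $F$ as ``the union of the geodesic lines of $X$ whose endpoint pairs are antipodal pairs of $K$'' is not a definition of a flat---that union can be much larger than any single flat (think of $\mathbb{H}^2\times\mathbb{R}$). You should simply cite Leeb for the existence of $F$ rather than attempt an ad hoc construction.

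In short: the outline is reasonable, but what you have written is a description of what needs to be proved, not a proof.
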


\subsection{Involutive sets}

Let $Y$ be a $\CAT(1)$ space.  For $p \in Y$, let $\A(p)$ be the set of \defn{antipodes} $\setp{q \in Y}{\dT(p, q) \ge \pi}$ of $p$ in $Y$.  For $C \subseteq Y$, let $\A(C) = \bigcup_{p \in C} \A(p)$.

\begin{definition}
A set $C \subseteq \bdT X$ is \defn{involutive} if $\A(C) \subseteq C$. \end{definition}

The following rigidity theorem is due to Lytchak.

\begin{theorem} [Main Theorem of \cite{lytchak05}]		\label{lytchak rigidity}
Let $Y$ be a finite-dimensional, geodesically complete $\CAT(1)$ space.
Suppose $A$ is a proper, closed, involutive subset of $Y$.
If $Y$ is geodesically complete, then it as a spherical building or spherical join. \end{theorem}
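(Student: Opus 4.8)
To prove the theorem, the plan is to argue by induction on $n = \dim Y$, passing from $Y$ to the links of its points and then reassembling the global structure. The leverage comes from the fact that an antipode-closed, proper, closed subset is a very rigid thing to carry: it rules out the ``generic'' geodesically complete $\CAT(1)$ spaces that are neither spherical buildings nor spherical joins. Two elementary observations set this up. Fixing a point $\xi \in Y \setminus A$, involutivity forces $\dT(a, \xi) < \pi$ for every $a \in A$ (otherwise $\xi \in \A(A) \s A$), so $A$ has diameter strictly less than $2\pi$ and is in particular bounded, even though general geodesically complete $\CAT(1)$ spaces need not be; and geodesic completeness of $Y$ guarantees that every point of $A$ has an antipode, which necessarily lies in $A$. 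These two facts alone exclude geodesic lines, $\R$-trees, and the like.

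The base cases are $n \le 1$. When $n = 0$ the space $Y$ is discrete, hence $\pi$-geodesic with no nontrivial geodesic arcs, so any two of its points are at distance $\ge \pi$; then any point outside $A$ would be an antipode of every point of $A$, forcing $A = Y$, so in fact no proper nonempty closed involutive subset exists and there is nothing to prove. When $n = 1$, every ball of radius $< \pi$ in $Y$ is an $\R$-tree, and a direct analysis of the resulting metric-graph structure --- in which, by the $n = 0$ case, the germ of $A$ at each of its points is either empty or all of the link --- together with the boundedness of $A$ and the abundance of its antipodes, shows that $Y$ is a spherical building or spherical join. For the inductive step I would assume $n \ge 2$ and that $Y$ is not already a nontrivial spherical join, fix $p \in A$, and pass to $\Lk(p)$, a geodesically complete $\CAT(1)$ space of dimension $n - 1$. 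The germ of $A$ at $p$ cuts out a subset $A_p \s \Lk(p)$; using that antipodal directions at $p$ are realized by local geodesics through $p$, together with geodesic completeness of $Y$, one checks that $A_p$ is closed and involutive in $\Lk(p)$. If $A_p$ is proper and nonempty for some such $p$, the inductive hypothesis makes $\Lk(p)$ a spherical building or join, and sliding $p$ along geodesics spreads this to an open family of links. If instead $A_p$ is empty or all of $\Lk(p)$ for every $p \in A$, then near each of its points $A$ is either isolated or open in $Y$, and I would argue that this forces $Y$ to split as a nontrivial join, contrary to assumption.

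It remains to globalize the local building-like structure. Using geodesic completeness to extend geodesics, I would assemble a system of maximal round spheres, the ``apartments,'' together with their codimension-one round subspheres, the ``walls,'' verify the apartment axioms --- any two points lie in a common apartment, and two apartments meeting along a wall are identified by a reflection fixing that wall --- and then invoke a local-to-global characterization of spherical buildings to conclude that $Y$ is a spherical building; a nontrivial join decomposition of $Y$ instead arises exactly when these round spheres fail to be consistently full-dimensional.

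The hard part will be this globalization, and within it the construction of the wall system and the verification of the gluing (Moufang-type) identification of adjacent apartments purely from the data that $A$ is closed, proper, and antipode-closed. Equally delicate is drawing the precise boundary between the two conclusions --- recognizing when the locally building-like structure assembles into an irreducible spherical building rather than forcing a nontrivial spherical join --- and, throughout, leaning on the boundedness of $A$ and on the abundance of antipodes inside it to exclude the primitive alternative that is neither a building nor a join.
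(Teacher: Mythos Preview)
The paper does not prove this theorem. It is quoted verbatim as the ``Main Theorem of \cite{lytchak05}'' and used as a black box (in \lemref{even case} and \lemref{odd case}); there is no proof in the paper to compare your proposal against.

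As for the proposal itself: the broad architecture you outline---induction on $\dim Y$, passing to links $\Lk(p)$ and showing the germ $A_p$ is again closed and involutive, then a local-to-global step assembling apartments---is indeed the shape of Lytchak's original argument. But what you have written is a strategy, not a proof, and you say as much: the $n=1$ case is dismissed as ``a direct analysis,'' the dichotomy on whether $A_p$ is proper is not resolved, and the entire globalization step is deferred with the honest admission that it is ``the hard part.'' These are exactly the places where the real work lies in \cite{lytchak05}, and nothing in the present paper supplies them. If your goal was to reconstruct Lytchak's proof, you would need to actually carry out the link analysis and the building recognition; if your goal was to supply what the paper needs, you should simply cite \cite{lytchak05} as the paper does.
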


By Kleiner \cite{kleiner}, the Tits boundary of $X$ is finite-dimensional because $\Gamma$ acts cocompactly on $X$; hence the above theorem applies to $Y = \bdT X$, where $A \subset \bdT X$ is closed under the Tits metric.
However, it is unknown what conditions to place on $X$ to ensure its Tits boundary $\bdT X$ is geodesically complete.

It is well-known (see \cite[Theorem II.9.24]{bridson}) that if $X$ is a complete, geodesically complete $\CAT(0)$ space, and the Tits boundary of $X$ is a spherical join, then $X$ splits as a product.
On the other hand, if the Tits boundary of $X$ is a spherical building, we have the following rigidity theorem due to Leeb.

\begin{theorem}	[\cite{leeb}]		\label{leeb}
Let $X$ be a geodesically complete, proper $\CAT(0)$ space.
If $\bdT X$ is a non-discrete irreducible spherical building under the Tits metric, then $X$ is either a Euclidean building or a higher rank symmetric space.
\end{theorem}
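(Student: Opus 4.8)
The plan is to reconstruct the metric geometry of $X$ from the combinatorial geometry of $B := \bdT X$. Write $r - 1 = \dim B$, so that $B$ is an irreducible, thick spherical building of rank $r \ge 2$ (rank $1$ being ruled out by non-discreteness); note that irreducibility of $B$ already forbids $X$ from splitting as a product, since the Tits boundary of a product is a spherical join. The goal is to endow $X$ with an atlas of maximal flats, each tiled by Weyl chambers and walls according to the Coxeter complex of $B$, so that these ``Weyl flats'' satisfy the exchange-type axioms characterizing Euclidean buildings in the metric sense of Kleiner and Leeb; one then splits into two cases according to whether the resulting apartment system is locally finite --- forcing $X$ to be a thick, irreducible, higher rank Euclidean building --- or a continuum --- forcing $X$ to be homogeneous and hence, via its (necessarily crystallographic) restricted root system, a higher rank Riemannian symmetric space of noncompact type. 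Geodesic completeness is essential throughout: without it the purely asymptotic invariant $B$ does not determine $X$, as one sees already by passing to proper convex subsets of a symmetric space.

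The first step is a flat sphere theorem adapted to this setting: every apartment $\sigma \cong S^{r-1}$ of $B$ bounds an $r$-flat $F \subset X$, and conversely --- since $r - 1 = \dim B$ is the top dimension --- every $r$-flat of $X$ has boundary an apartment of $B$. Geodesic completeness enters by permitting the extension of partially defined flats, and the flat strip theorem shows that two $r$-flats sharing a boundary apartment are parallel and cobound a flat slab. Pulling back the Coxeter complex structure of $\sigma$ along the geodesic cone from an interior point tiles each such $F$ into Weyl chambers and walls. I would then translate the building axioms of $B$ --- any two chambers lie in a common apartment, apartments admit retractions --- into the statements that any two points of $X$ lie in a common $r$-flat and that any two Weyl flats are glued along a wall-bounded convex piece; together with the two flat theorems this is exactly the package needed for the Euclidean-building axioms. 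Organizing the verification requires understanding the ``singular'' flats of $X$ --- those contained in intersections of several maximal flats --- and their parallel sets, which split off the singular flat and leave a cross-section that is itself a geodesically complete $\CAT(0)$ space, of strictly smaller rank, whose Tits boundary is the building appearing as the link of the corresponding face of $B$; this is the natural place for an inductive step on $r$, though the low-rank cases must be secured separately (a rank-$1$ $\CAT(0)$ space has building boundary for trivial reasons, so one has to extract the extra rigidity from the structure $X$ imposes on the cross-section).

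With a Euclidean-building structure on $X$ in hand, the dichotomy is read off the local picture at a point $x$: either the walls of the Weyl flats through $x$ sweep out a continuum of directions in $\Sigma_x X$, so the apartment system is non-discrete, forcing continuous families of flats and homogeneity of $X$ --- recognizable, e.g. by verifying that the geodesic symmetries are isometries, as a higher rank symmetric space --- or else the walls are locally finite, the apartment system is discrete, and $X$ is a thick, irreducible, higher rank Euclidean building; the structure theory of buildings and of symmetric spaces then pins down the model, and in particular forces $B$ to have crystallographic Coxeter type (so that, when $r = 2$, $B$ is a generalized $m$-gon with $m \in \{3,4,6\}$). I expect the main obstacle to be precisely the compatibility and descent of the second paragraph: proving that the $r$-flats delivered by the flat sphere theorem assemble into a single coherent apartment system, and that geodesic completeness together with the building structure passes to all the cross-sections in play. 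This is exactly where one must bridge the gap between the coarse asymptotic data carried by $B$ and the fine local metric geometry of $X$, and it is the technical heart of the whole argument.
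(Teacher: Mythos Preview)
The paper does not prove this theorem at all: it is quoted as a result of Leeb \cite{leeb} and used as a black box (to deduce \corref{dimension 1, diameter pi, geodesically complete} from \thmref{dimension 1, diameter pi, boundary statement}). There is therefore nothing in the paper to compare your proposal against.

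For what it is worth, your outline is broadly faithful to the strategy of Leeb's original argument: fill each apartment of $\bdT X$ by a top-dimensional flat in $X$, pull back the Coxeter structure to tile each flat by Weyl chambers, verify the Kleiner--Leeb axioms for a Euclidean building, and then split on whether the apartment system is discrete (Euclidean building) or not (symmetric space). You correctly identify the technical crux as showing that the flats assemble into a coherent system and that geodesic completeness descends to cross-sections of parallel sets. But as written this is a plan rather than a proof, and filling in the details---particularly the flat-sphere theorem in the generality needed, the exchange/retraction properties, and the recognition of the symmetric-space case---is substantial work that occupies most of Leeb's paper. If the intent was merely to indicate why the cited theorem is plausible, your sketch does that; if the intent was to supply a self-contained proof for inclusion here, it is far from complete, and in any case the present paper neither needs nor attempts such a proof.
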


\section{The Group Action}

\begin{standing hypothesis}
For the rest of the paper, let $\G$ be a group acting geometrically (that is: properly discontinuously, cocompactly, and by isometries) on a proper $\CAT(0)$ space $X$.
\end{standing hypothesis}

\subsection{Limit Operators}

The following construction comes from Guralnik and Swenson \cite{gs}:  Let $G$ be a discrete group acting on a compact Hausdorff space $Z$.  Denote by $\beta G$ the Stone--\VV{C}ech compactification of $G$.
For each $z \in Z$, extend the orbit map $\rho_z \colon g \mapsto g z$ and, for $\w \in \beta G$, define $T^{\w} \colon Z \to Z$ by $T^{\w} z = (\beta \rho_z)(\w)$.
Thus, for fixed $z \in Z$, the map $\w \mapsto T^{\w} z$ is a continuous map of $\beta G$ into $Z$ (although $z \mapsto T^{\w} z$ may not be continuous for fixed $\w \in \beta G$).

The family $\set{T^{\w}}_{\w \in \BG}$ of operators is closed under composition.
The inverse map $g \mapsto g^{-1}$ on $G$ extends to a continuous involution $S \colon \beta G \to \beta G$; however, $T^{\w} T^{S\w}$ usually only equals the identity for $\w \in G$.

Now let $\G$ be a discrete group acting properly discontinuously and by isometries on a proper CAT($0$) space $X$ (\emph{proper} as a metric space).  Since $\cl X = X \cup \bdc X$ is compact Hausdorff, the above construction gives us a family of operators $T^{\w}$ on $\cl X$.  Guralnik and Swenson observe that since every $\g \in \G$ acts by isometries on $\bdT X$, every $T^{\w}$ is in fact $1$-Lipschitz on $\bdT X$ by semicontinuity of $\dT$.

\subsection{Folding}

\begin{definition}
Let $K \subset \bdT X$ be a sphere, and suppose $\w \in \BG$ has the property that $T^{\w} (\bd X) = T^{\w} K$ and $T^{\w} \res{K}$ is an isometry.  We say that $\w$ \defn{folds} $K$ onto $T^{\w} K$ (or, $\w$ is a \defn{$K$-folding}), and we call $T^{\w} K$ a \defn{folded} sphere. \end{definition}

Combining results of Guralnik and Swenson \cite[Lemma 3.25]{gs} and Leeb \cite[Proposition 2.1]{leeb}, we obtain the following.

\begin{corollary}
Let $\G$ be a group acting geometrically on a proper $\CAT(0)$ space $X$.
Let $K \subseteq \bdT X$ be a round sphere.  Then some $\w \in \BG$ folds $K$. \end{corollary}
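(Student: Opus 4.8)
The strategy is to obtain the folding in two stages: first produce a limit operator $T^{\w}$ that collapses all of $\bdc X$ onto the image of $K$, with that image again a round sphere; then invoke rigidity of non-expanding surjections of spheres to see that $T^{\w}$ is isometric on $K$. Recall the facts already available: $\cl X$ is compact Hausdorff, each $T^{\w}$ ($\w \in \BG$) is $1$-Lipschitz on $\bdT X$ by semicontinuity of $\dT$, the family $\set{T^{\w}}_{\w \in \BG}$ is closed under composition, $\dim \bdT X < \infty$ by Kleiner, and $\dim K = \dim \bdT X$ since $K$ is round. Crucially, $\G K$ is dense in $\bdc X$ by \lemref{density of round spheres}, and this density is exactly what lets one funnel the whole boundary into a single isometric copy of $K$. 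The precise statement I would quote is \cite[Lemma~3.25]{gs}: there exists $\w \in \BG$ with $T^{\w}(\bd X) = T^{\w} K$ and with $T^{\w} K$ still of full geometric dimension $\dim \bdT X$. Heuristically one finds $\w$ by choosing $T^{\w}(\bd X)$ as small as possible, using that the translates $gK$ are dense and that $\set{T^{\w}}$ is a compact right-topological semigroup (so one may, for instance, extract an idempotent $T^{\w} = T^{\w}\circ T^{\w}$, which gives a $1$-Lipschitz retraction of $\bdT X$ onto $T^{\w}(\bd X)$); forcing the minimal image to be a \emph{round} sphere, and a $T^{\w}$-translate of $K$, is the technical core of that lemma.

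Given such a $\w$, the restriction $T^{\w}\res{K}\colon K \to T^{\w} K$ is a $1$-Lipschitz surjection from the round sphere $K$ onto a subset of $\bdT X$ of the same geometric dimension, and \cite[Proposition~2.1]{leeb} forces it to be an isometry; in particular it is injective, so $T^{\w} K$ is indeed a round sphere. Morally the rigidity holds because a $1$-Lipschitz surjection cannot decrease top-dimensional Hausdorff measure and hence decreases no distances; equivalently, each great semicircle of $K$ has length $\pi$ and is carried to a path of length $\le \pi$ between points at Tits distance $\pi$, hence to a geodesic of the same length, and such semicircles exhaust $K$. Together with $T^{\w}(\bd X) = T^{\w} K$, this is exactly the statement that $\w$ folds $K$ onto the folded sphere $T^{\w} K$.

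I expect the main obstacle to be Stage~1. Since $z \mapsto T^{\w} z$ need not be continuous, one cannot merely push the dense orbit $\G K$ forward and pass to closures; the selection of $\w$ must instead exploit the semigroup structure of $\set{T^{\w}}$ and a careful minimization that simultaneously keeps the image a round sphere identified with a copy of $K$, which is precisely the work done in \cite[Lemma~3.25]{gs}. Once such an image is in hand, Stage~2 is a formal consequence of \cite[Proposition~2.1]{leeb} and needs no further input specific to $\bdT X$.
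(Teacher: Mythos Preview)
Your proposal is correct and is exactly the paper's approach: the corollary is stated in the paper as an immediate combination of \cite[Lemma~3.25]{gs} (your Stage~1) and \cite[Proposition~2.1]{leeb} (your Stage~2), with no further argument given. Your heuristic discussion of idempotents and Hausdorff measure is extra commentary, but the logical skeleton---use Guralnik--Swenson to get $T^{\w}(\bd X) = T^{\w} K$ with $T^{\w} K$ top-dimensional, then use Leeb's rigidity to force $T^{\w}\res{K}$ to be an isometry---matches the paper verbatim.
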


\subsection{$\pi$-convergence}

\begin{definition}
Let $\w \in \BG \setminus \G$.  The \defn{attracting point} $\w^+ = T^{\w}(x) \in \bd X$ and \defn{repelling point} $\w^- = T^{S \w}(x) \in \bd X$ of $\w$ do not depend on $x \in X$. \end{definition}

Papasoglu and Swenson's $\pi$-convergence theorem \cite[Lemma 19]{ps} has a form in terms of $\BG$:

\begin{theorem} [Theorem 3.3 of \cite{gs}]
Let $\G$ be a group acting properly discontinuously by isometries on a \textnormal{CAT($0$)} space $X$, and let $\w \in \BG \setminus \G$.  Then
\[\angle (p, \w^-) + \angle (T^{\w} p, \w^+) \le \pi\]
for all $p \in \bd X$. \end{theorem}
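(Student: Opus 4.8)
The plan is to recognize this as the $\BG$-translation of Papasoglu and Swenson's $\pi$-convergence lemma \cite[Lemma 19]{ps} and to reduce to it by passing to a net. Since $\w \in \BG \setminus \G$ and $\cl X$ is compact Hausdorff, there is a net $(\g_i)$ in $\G$ with $\g_i \to \w$ in $\BG$; then $\g_i z \to T^{\w} z$ in the cone topology for every $z \in \cl X$, so in particular $\g_i x \to \w^+$, $\g_i^{-1} x \to \w^-$, and $\g_i p \to T^{\w} p$ for all $p \in \bd X$. Proper discontinuity gives $d(x, \g_i^{\pm 1} x) \to \infty$ along the net, so these are honest boundary limits. (If one insists on sequences, a diagonal argument over a countable dense subset of $\cl X$ yields a sequence in $\G$ that realizes $T^{\w}$ on that set, which is all the estimate needs; alternatively, the comparison argument of \cite[Lemma 19]{ps} goes through verbatim for nets.)

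The geometric core I would prove directly from $\CAT(0)$ comparison. Fix $o \in X$ and a point $c$ on the ray $[o, p)$. The $\CAT(0)$ angle-sum inequality for the triangle $\triangle(o, \g_i^{-1} o, c)$ reads
\[ \angle_o(\g_i^{-1} o, c) + \angle_{\g_i^{-1} o}(o, c) + \angle_c(o, \g_i^{-1} o) \le \pi . \]
Let $c \to p$ along $[o,p)$: the first term is constantly $\angle_o(\g_i^{-1} o, p)$, the second tends to $\angle_{\g_i^{-1} o}(o, p)$ (geodesics vary continuously in a proper $\CAT(0)$ space), and the third tends to $0$ because $o$ and $\g_i^{-1} o$ lie in a fixed bounded set while $c \to \infty$. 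Since $\g_i$ is an isometry, $\angle_{\g_i^{-1} o}(o, p) = \angle_o(\g_i o, \g_i p)$, whence $\angle_o(\g_i^{-1} o, p) + \angle_o(\g_i o, \g_i p) \le \pi$. Now let $i \to \infty$ along the net: as $d(o, \g_i^{\pm1} o) \to \infty$, the initial directions at $o$ of $[o, \g_i^{-1}o]$, $[o, \g_i o]$, $[o, \g_i p)$ converge in $\Lk(o)$ to those of $[o, \w^-)$, $[o, \w^+)$, $[o, T^{\w}p)$, so the angles at $o$ pass to the limit, giving
\[ \angle_o(p, \w^-) + \angle_o(T^{\w} p, \w^+) \le \pi \qquad \text{for every } o \in X . \]

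The remaining step---upgrading the basepoint angles $\angle_o$ to the angle metric $\angle = \sup_{o} \angle_o$---is the point I expect to be the main obstacle, since the supremum cannot be distributed across the sum in the last display. To close it I would invoke \cite[Lemma 19]{ps}, whose proof controls the Tits angle through the linear-divergence formula $\tfrac1t\, d(\sigma_p(t), \sigma_q(t)) \to 2\sin\!\big(\tfrac12 \angle(p,q)\big)$ together with the flat strip theorem; alternatively, one argues along a family of basepoints $o_t$ escaping to infinity toward $p$ (resp.\ toward $\w^-$), along which $\angle_{o_t}(p, \w^-) \to \angle(p, \w^-)$, and then uses the complementary bound $\angle_{o_t}(T^{\w}p, \w^+) \le \pi - \angle_{o_t}(p, \w^-)$ together with the fact (noted above) that $T^{\w}$ is $1$-Lipschitz on $\bdT X$ to pin down $\angle(T^{\w}p, \w^+)$. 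Apart from this Tits-angle passage, the argument is routine $\CAT(0)$ comparison geometry plus bookkeeping of cone-topology limits.
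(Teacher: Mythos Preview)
The paper does not prove this statement; it is quoted as Theorem~3.3 of \cite{gs} and introduced as the $\BG$-reformulation of Papasoglu--Swenson's $\pi$-convergence lemma \cite[Lemma~19]{ps}. Your opening plan---reduce to \cite[Lemma~19]{ps} by passing from $\w$ to an approximating net in $\G$---is exactly what that citation amounts to, and it is correct. (You can simplify the diagonal step: since $\cl X^{3}$ is compact metrizable, any net whose coordinates converge to $(\w^{+},\w^{-},T^{\w}p)$ admits a \emph{sequence} with the same three limits, and \cite{ps} applies directly to that sequence.)

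Your attempt at a self-contained comparison argument, however, has a genuine gap precisely where you flag it. The triangle argument cleanly yields
\[
\angle_{o}(p,\w^{-}) + \angle_{o}(T^{\w}p,\w^{+}) \le \pi \qquad\text{for every } o\in X,
\]
but this does \emph{not} imply the theorem, since $\angle = \sup_{o}\angle_{o}$ and the two suprema may be approached at unrelated basepoints. Your proposed fixes do not close this: sending $o_{t}$ out toward $p$ makes $\angle_{o_{t}}(p,\w^{-})\to\angle(p,\w^{-})$, but you then only get $\liminf_{t}\angle_{o_{t}}(T^{\w}p,\w^{+})\le \pi-\angle(p,\w^{-})$, which says nothing about $\angle(T^{\w}p,\w^{+})$; and the $1$-Lipschitzness of $T^{\w}$ on $\bdT X$ is a consequence of the theorem (via semicontinuity), not a tool that bounds $\angle(T^{\w}p,\w^{+})$ from above. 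The actual content of \cite[Lemma~19]{ps} lies in replacing vertex angles by \emph{comparison} angles (equivalently, the asymptotic formula $\tfrac{1}{t}d(\sigma_{p}(t),\sigma_{q}(t))\to 2\sin\tfrac{\angle(p,q)}{2}$) before passing to the limit, which is exactly the step your sketch defers back to \cite{ps}. So as a proof plan the first paragraph suffices; the rest is a helpful warm-up but not an independent argument.
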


\begin{corollary} [cf.\ Lemma 3.6 of \cite{gs}]		\label{antipodal pi-convergence}
Let $\w \in \BG \setminus \G$.  If $p \in \A(\w^-)$ then $T^{\w} p = \w^+$. \end{corollary}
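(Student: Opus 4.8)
The plan is to deduce this immediately from the $\pi$-convergence theorem (Theorem 3.3 of \cite{gs}) together with the relation between the angle metric and the Tits metric. First I would recall that $\angle(p,q) = \min\set{\pi, \dT(p,q)}$ for all $p, q \in \bd X$, so the hypothesis $p \in \A(\w^-)$—which by definition means $\dT(p, \w^-) \ge \pi$—is equivalent to $\angle(p, \w^-) = \pi$.

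Next I would apply Theorem 3.3 of \cite{gs}, which gives $\angle(p, \w^-) + \angle(T^{\w} p, \w^+) \le \pi$ (valid since $\w \in \BG \setminus \G$). Substituting $\angle(p, \w^-) = \pi$ yields $\pi + \angle(T^{\w} p, \w^+) \le \pi$, hence $\angle(T^{\w} p, \w^+) = 0$. Since the angle metric is a genuine metric on $\bd X$, this forces $T^{\w} p = \w^+$, as claimed.

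Honestly there is no serious obstacle here: the only point requiring a moment's care is invoking the identity $\angle = \min\set{\pi, \dT}$ to translate "antipode" (a Tits-metric condition) into the angle-metric statement $\angle(p,\w^-) = \pi$ that feeds directly into the $\pi$-convergence inequality; after that the conclusion is a one-line arithmetic consequence.
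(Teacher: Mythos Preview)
Your proof is correct and matches the paper's intended approach: the paper states this as an immediate corollary of the $\pi$-convergence inequality (Theorem~3.3 of \cite{gs}) without giving a separate proof, and your argument is exactly the one-line deduction that justifies this.
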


\begin{definition}
Points $p,q \in \bd X$ are called \defn{$\G$-dual} if there exists some $\w \in \BG \setminus \G$ such that $\w^+ = p$ and $\w^- = q$.  For $p \in \bd X$, let $\D(p)$ denote the set of points $q \in \bd X$ such that $p,q$ are $\G$-dual. \end{definition}

\begin{corollary} [cf.\ Lemma 1.5 of \cite{bb}] 		\label{antipodal pi-convergence 2}
Let $p, q \in \bdT X$ be antipodal.  Then $\D(q) \s \cl{\G p}$ and $\D(p) \s \cl{\G q}$. \end{corollary}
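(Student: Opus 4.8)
The plan is to use the previous corollary (\corref{antipodal pi-convergence}) together with a density argument coming from the dynamics of the operators $T^\w$. Fix antipodal points $p, q \in \bdT X$, so that $\dT(p,q) \ge \pi$, hence $\angle(p,q) = \pi$ and $q \in \A(p)$, $p \in \A(q)$. By symmetry it suffices to prove $\D(q) \s \cl{\G p}$; the other containment follows by swapping the roles of $p$ and $q$.

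So let $r \in \D(q)$ be arbitrary; by definition there is some $\w \in \BG \setminus \G$ with $\w^+ = r$ and $\w^- = q$. Since $p \in \A(q) = \A(\w^-)$, \corref{antipodal pi-convergence} gives $T^\w p = \w^+ = r$. Now recall that $T^\w p = (\beta \rho_p)(\w)$, where $\rho_p \colon \G \to \cl X$ is the orbit map $\g \mapsto \g p$; the point $\w$ lies in the closure of $\G$ inside $\BG$, and $\beta\rho_p$ is continuous, so $T^\w p = (\beta\rho_p)(\w)$ lies in the closure of $(\beta\rho_p)(\G) = \rho_p(\G) = \G p$ inside $\cl X$. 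Thus $r = T^\w p \in \cl{\G p}$. Since $r \in \D(q)$ was arbitrary, $\D(q) \s \cl{\G p}$, as desired. (One should remark that $\cl{\G p}$ here denotes closure in $\cl X = X \cup \bdc X$, but since $p \in \bd X$ and $\G$ acts by isometries fixing $\bd X$ setwise, the orbit $\G p$ lies in $\bd X$; for a sequence $\g_n p$ converging in $\cl X$ the limit lies in $\bd X$ unless $\g_n p$ escapes to... in fact $\g_n p \in \bd X$ always, and $\bd X$ is closed in $\cl X$, so the closure is taken in $\bdc X$.)

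I do not expect a serious obstacle here: the corollary is essentially a repackaging of \corref{antipodal pi-convergence} through the universal property of the Stone--\v{C}ech compactification, which guarantees exactly that $T^\w p$ is a limit of points $\g p$ with $\g \in \G$. The only point requiring a little care is making sure the closure is interpreted consistently (cone topology on $\bdc X$, which is the restriction of the topology on $\cl X$) and that $\w \notin \G$ is genuinely available — but that is built into the definition of $\D(q)$. If one wanted to avoid even mentioning $\cl X$, one could instead observe directly: pick a net $\g_\alpha \to \w$ in $\BG$ with $\g_\alpha \in \G$; then $\g_\alpha p = T^{\g_\alpha} p \to T^\w p = r$ in the cone topology, exhibiting $r$ as a cone-limit of the orbit net $\G p$.
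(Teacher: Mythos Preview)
Your argument is correct and is exactly the intended one: the paper states this result without proof, as an immediate corollary of \corref{antipodal pi-convergence}, and you have supplied precisely that deduction (apply $T^{\w}$ to the antipode and use that $T^{\w}p$ is a cone-limit of orbit points by the Stone--\v{C}ech construction). One small remark: when you write ``$r\in\D(q)$ means there is $\w$ with $\w^{+}=r$, $\w^{-}=q$,'' you are implicitly using that $\G$-duality is symmetric; this is true because the involution $S$ on $\BG$ swaps attracting and repelling points, but it is worth a half-sentence since the paper's definition is phrased asymmetrically.
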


\subsection{Pulling}

A sequence $(x_n)$ in $X$ is said to \defn{radially converge} to $p \in \bd X$ if $x_n \to p$ and, for some (any) fixed geodesic ray $[y, p)$ in $X$, there exists $R > 0$ such that every $x_n$ is distance $\le R$ to the image of $[y, p)$ in $X$.

\begin{definition}
$\w \in \BG$ is said to \defn{pull from} $p \in \bd X$ if there exists a sequence $(\g_n)$ in $\G \subset \BG$ such that $(\g_n)$ accumulates on $S\w \in \BG$ and $(\g_n x)$ radially converges to $p \in \bd X$ for some (any) $x \in X$.
(In particular, $p = \w^-$.)
\end{definition}

\begin{remark}
Guralnik and Swenson \cite{gs} give an equivalent definition of pulling.
\end{remark}

Note by cocompactness we can pull from each $p \in \bd X$.
Also we have (from \cite{gs}):

\begin{proposition}		\label{pulling}
Let $\w \in \BG$ pull from $p \in \bd X$, and let $K \subset \bdT X$ be a round sphere.
\begin{enumerate}
\item  $\angle(T^{\w} p, T^{\w} q) = \angle(p, q)$ for all $q \in \bd X$, and $T^{\w}$ maps $\bd X$ into the suspension $\susp (T^{\w} p, \w^+) = \setp{n \in \bd X}{\dT(T^{\w} p, n) + \dT(n, \w^+) = \pi}$.
\item  If $p \in K$, then $T^{\w} \res{K}$ is an isometry.
In particular, $T^{\w} K$ is a round sphere, and $T^{\w}$ maps the antipode of $p$ on $K$ to the antipode of $T^{\w} p$ on $T^{\w} K$, which equals $\w^+$.
\end{enumerate}
\end{proposition}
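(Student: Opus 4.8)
The result is due to Guralnik and Swenson \cite{gs}; the plan is to run the $\pi$-convergence theorem for the upper bounds and a limiting argument along the radial ray for the matching lower bounds. Unwinding the definition of pulling, fix a net $(\g_n)$ in $\G$ with $\g_n \to S\w$ in $\BG$ and with $(\g_n x_0)$ converging radially to $p$ along the ray $\xi = [x_0, p)$, with radial constant $R$; here $x_0$ is the basepoint. Applying the continuous involution $S$ gives $\g_n^{-1} \to \w$, so $T^\w z = \lim_n \g_n^{-1}z$ for all $z \in \cl X$; in particular $\w^- = p$ and $\w^+ = \lim_n \g_n^{-1}x_0$, and proper discontinuity gives $d(x_0, \g_n^{-1}x_0) \to \infty$, so $\w \in \BG \setminus \G$ and the $\pi$-convergence theorem and \corref{antipodal pi-convergence} apply to $\w$. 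Now choose $t_n \to \infty$ with $d(\g_n x_0, \xi(t_n)) \le R$ and set $z_n = \g_n^{-1}\xi(t_n)$, a point on the ray $\g_n^{-1}\xi = [\g_n^{-1}x_0, \g_n^{-1}p)$ within distance $R$ of $x_0$. Reparametrizing $\g_n^{-1}\xi$ so that $z_n$ sits at parameter $0$ and applying Arzel\`a--Ascoli (using properness of $X$), a subnet converges to a geodesic line $\sigma\colon\R\to X$ with $\sigma(0) = \lim_n z_n$, $\sigma(-\infty) = \w^+$, and $\sigma(+\infty) = T^\w p$; in particular $\angle_{\sigma(0)}(\w^+, T^\w p) = \pi$.

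For part (1): since each $\g_n$ acts isometrically on $\bdT X$, the operator $T^\w$ is $1$-Lipschitz for $\dT$, so $\angle_{\sigma(0)}(T^\w p, T^\w q) \le \angle(T^\w p, T^\w q) \le \angle(p,q)$; and the $\pi$-convergence theorem gives $\angle(T^\w q, \w^+) \le \pi - \angle(p,q)$. For the reverse bounds, transport angles by $\g_n$: $\angle_{z_n}(\g_n^{-1}x_0, \g_n^{-1}q) = \angle_{\xi(t_n)}(x_0, q) \to \pi - \angle(p,q)$ by the standard behavior of angles along a ray (cf.\ \cite[II.9]{bridson}), so lower semicontinuity of the angle yields $\angle_{\sigma(0)}(\w^+, T^\w q) \le \pi - \angle(p,q)$. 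Combined with the triangle inequality at $\sigma(0)$,
\[
\pi = \angle_{\sigma(0)}(\w^+, T^\w p) \le \angle_{\sigma(0)}(\w^+, T^\w q) + \angle_{\sigma(0)}(T^\w q, T^\w p) \le (\pi - \angle(p,q)) + \angle(p,q) = \pi,
\]
so equality holds throughout; hence $\angle(T^\w p, T^\w q) \ge \angle_{\sigma(0)}(T^\w p, T^\w q) = \angle(p,q)$, giving $\angle(T^\w p, T^\w q) = \angle(p,q)$, and likewise $\angle(T^\w q, \w^+) = \pi - \angle(p,q)$, for every $q \in \bd X$. Feeding this back --- using \corref{antipodal pi-convergence} to handle $q \in \A(p)$, where $T^\w q = \w^+$ --- gives $\dT(T^\w p, T^\w q) + \dT(T^\w q, \w^+) = \pi$, i.e.\ $T^\w(\bd X) \subseteq \susp(T^\w p, \w^+)$. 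I expect the main obstacle to lie precisely here: verifying that the reparametrized rays converge in the cone topology to a bi-infinite geodesic with endpoints exactly $\w^+$ and $T^\w p$, and noting that mere lower semicontinuity of angles suffices --- which works only because the estimate is run at the single point $\sigma(0)$, where the anchoring identity $\angle_{\sigma(0)}(\w^+, T^\w p) = \pi$ is a genuine equality and the two remaining angle bounds are exactly complementary.

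For part (2): assume $p \in K$ and let $p^* \in K$ be the antipode of $p$, so $\dT(p, p^*) = \pi$, i.e.\ $p^* \in \A(\w^-)$; by \corref{antipodal pi-convergence}, $T^\w p^* = \w^+$, whence $\dT(T^\w p, \w^+) = \dT(T^\w p, T^\w p^*) \le \dT(p, p^*) = \pi$ by $1$-Lipschitzness, while $\dT(T^\w p, \w^+) \ge \pi$ from the geodesic line $\sigma$; so $\dT(T^\w p, \w^+) = \pi$. For any $q \in K$ we have $\dT(p,q) + \dT(q, p^*) = \pi$ on the round sphere $K$, so applying $1$-Lipschitzness twice and then the triangle inequality,
\[
\pi = \dT(p,q) + \dT(q, p^*) \ge \dT(T^\w p, T^\w q) + \dT(T^\w q, \w^+) \ge \dT(T^\w p, \w^+) = \pi,
\]
forcing $\dT(T^\w p, T^\w q) = \dT(p, q)$ for all $q \in K$. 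Thus $T^\w\res{K}$ is a distance-nonincreasing map of the round sphere $K$ into the $\CAT(1)$ space $\bdT X$, of the same dimension, which preserves the distance $\pi$ between the antipodal pair $\{p, p^*\}$; by rigidity of round spheres under such maps (cf.\ \cite[Proposition~2.1]{leeb}) it is an isometric embedding. Consequently $T^\w K$ is a sphere with $\dim(T^\w K) = \dim K = \dim\bdT X$, hence round, and $T^\w$ carries the antipode $p^*$ of $p$ on $K$ to $\w^+ = T^\w p^*$, which is the antipode of $T^\w p$ on $T^\w K$.
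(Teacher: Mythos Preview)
The paper does not prove this proposition; it is quoted directly from Guralnik--Swenson \cite{gs}, with only the attribution ``Also we have (from \cite{gs}):'' preceding the statement. So there is no proof in the paper to compare your attempt against.

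Your reconstruction follows the Guralnik--Swenson strategy and is essentially correct, but two small slips deserve mention. First, in part~(1) you invoke ``lower semicontinuity of the angle'' to obtain $\angle_{\sigma(0)}(\w^+, T^\w q) \le \pi - \angle(p,q)$; but the Alexandrov angle is \emph{upper} semicontinuous in its arguments, which would yield the opposite inequality. This does not actually break the argument: the bound you need already follows from the $\pi$-convergence inequality $\angle(T^\w q, \w^+) \le \pi - \angle(p,q)$ that you stated two lines earlier, since $\angle_{\sigma(0)} \le \angle$. So your displayed chain of equalities is valid, just with a cleaner justification than the one you wrote. Second, in part~(2) the displayed inequality chain only forces the \emph{sum} $\dT(T^\w p, T^\w q) + \dT(T^\w q, \w^+)$ to equal $\pi$, not the individual identity $\dT(T^\w p, T^\w q) = \dT(p,q)$; the latter follows instead from part~(1) when $\angle(p,q) < \pi$, together with the antipodal case $q = p^*$ you already handled. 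With these two clarifications the argument goes through.
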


\subsection{Splittings and the Centers Lemma}

The next two results come from \cite{ricks-radius}.
The first is part of a more general splitting theorem \cite[Theorem C]{ricks-radius}.

\begin{theorem}			\label{radius rigidity}
Let $\G$ be a group acting geometrically on a proper $\CAT(0)$ space $X$.
Let $K \subset \bd X$ be a round sphere and $A \subset \bd X$ a nonempty closed, $\G$-invariant set.  If $\bd X$ does not split as a spherical join, then $A \cap K$ cannot lie completely in a proper subsphere of $K$. \end{theorem}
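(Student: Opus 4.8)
The plan is to argue by contradiction: suppose $\bd X$ does not split as a spherical join, yet $A \cap K$ lies in some proper subsphere $K' \subsetneq K$. The idea is that $K'$ being a proper subsphere means there is a pair of antipodal points $\xi_+, \xi_- \in K$ lying on the geodesic "axis" perpendicular to $K'$, i.e. every point of $K'$ is at distance exactly $\frac{\pi}{2}$ from both $\xi_+$ and $\xi_-$. I would first use \lemref{density of round spheres} to see that $\G K$ is dense in $\bdc X$, so $A$, being closed and $\G$-invariant, either is empty (excluded) or meets every translate $\g K$ in a way controlled by the hypothesis; in particular $\g(A \cap K) = A \cap \g K$ lies in the proper subsphere $\g K'$ for every $\g \in \G$. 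Since $A$ is closed, $A \supseteq \cl{\G(A \cap K)}$, and density of $\G K$ should force $A$ to be the closure of a union of proper subspheres.

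The heart of the argument is to produce, from this configuration, a genuine join decomposition of $\bdT X$. Here I would invoke the duality machinery: pick $p \in A \cap K \subseteq K'$ and pick an antipode $q$ of $p$ on $K$. By \propref{pulling}(2), there is $\w \in \BG$ pulling from $p$ with $T^{\w}\res{K}$ an isometry carrying $q$ to $\w^+$, and $T^{\w}$ maps $\bd X$ into the suspension $\susp(T^{\w}p, \w^+)$. The point $\w^+$ is then a point of $A$ (as $A$ is $\G$-invariant and closed, hence $T^{\w}$-invariant in the appropriate sense) that is antipodal to $T^{\w}p \in A$. Iterating this, and using \corref{antipodal pi-convergence 2} to get $\D(q) \subseteq \cl{\G p}$, I would build up a pair of Tits-closed subsets $A_+, A_-$ of $\bd X$, each contained in $A$, with every point of $A_+$ antipodal to every point of $A_-$; then the density of $\G K$ together with the fact that $A \cap \g K$ always lies in a proper subsphere $\g K'$ (whose polar axis supplies two such "opposite" points) should show $\bd X = A_+ * A_-$ in the Tits metric — a spherical join — contradicting our assumption.

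More concretely, the mechanism for the join: if $A \cap K \subseteq K'$ for a subsphere of codimension $\ge 1$, then the two poles $\xi_{\pm} \in K$ of $K'$ satisfy $\dT(\xi_{\pm}, x) = \frac{\pi}{2}$ for all $x \in K'$. Translating by $\G$ and passing to closures via \lemref{density of round spheres}, one gets that a $\frac{\pi}{2}$-equidistant relation propagates across all of $\bd X$; combined with the $\CAT(1)$ structure of $\bdT X$ (every ball of radius $<\pi$ being contractible, indeed an $\R$-tree if $\dim = 1$, though we are not assuming that here) and the standard characterization that a $\CAT(1)$ space all of whose points split into two $\frac{\pi}{2}$-equidistant halves is a spherical join, this yields the splitting. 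I would lean on the join-detection criterion implicit in the earlier discussion (the suspension description in \propref{pulling}(1)) to make "$A$ is squeezed into subspheres" force "$\bd X$ suspends."

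The step I expect to be the main obstacle is showing that the local/partial orthogonality coming from a single subsphere $K'$ actually globalizes to a join decomposition of the \emph{entire} Tits boundary, rather than just of $\cl{\G K}$ or of $A$ itself. The subtlety is that $\G K$ is dense only in the cone topology, while joins are a Tits-metric phenomenon and $\dT$ is merely lower semicontinuous; so pushing the equidistance relation through a cone-topology limit requires care — one must use the $\pi$-convergence estimates (the displayed inequality $\angle(p,\w^-) + \angle(T^{\w}p, \w^+) \le \pi$) to control angles in the limit rather than relying on continuity. I would handle this by working with antipodal pairs throughout, since antipodality ($\dT \ge \pi$) is exactly the relation that behaves well under $T^{\w}$ by \corref{antipodal pi-convergence}, and then converting the resulting rich family of antipodal pairs into a join via Lytchak-type rigidity or the suspension structure already extracted in \propref{pulling}.
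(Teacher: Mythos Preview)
The paper does not prove \thmref{radius rigidity}; it is quoted from \cite[Theorem~C]{ricks-radius} and used as a black box. So there is no in-paper argument to compare your proposal against.

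Evaluated on its own, the proposal has a real gap precisely where you flag ``the main obstacle.'' You want to upgrade the local relation $\dT(\xi_\pm,x)=\tfrac{\pi}{2}$, valid for $x$ in one fixed equator $K'\subset K$, to a global join $\bdT X = B_+ * B_-$; but density of $\G K$ is only in the cone topology and $\dT$ is merely lower semicontinuous there, so an \emph{equality} like $\dT=\tfrac{\pi}{2}$ does not pass to cone limits. Your proposed repair via \corref{antipodal pi-convergence} and \propref{pulling} controls antipodality ($\dT\ge\pi$) and places each $T^\w(\bd X)$ inside \emph{some} suspension, but supplies no mechanism to make those suspensions coincide, nor to show that every point of $\bdT X$ lies on a Tits segment between the two putative join factors. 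There is also a smaller unjustified step: you assert $\w^+\in A$ from $T^\w$-invariance of $A$, but $\w^+=T^\w q$ with $q$ the $K$-antipode of $p$, and the hypothesis $A\cap K\subseteq K'$ gives only $q\in K'$, not $q\in A$. The route taken in \cite{ricks-radius} is different in spirit: one observes that $A\cap K$ lying in a proper subsphere forces $\radius^K(A\cap K)\le\tfrac{\pi}{2}$, and then exploits the nonempty closed $\G$-invariant set of circumcenters together with splitting criteria for $X$, rather than attempting to assemble the join combinatorially on the boundary from translates of $K'$.
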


Let $(Y,d)$ be any metric space, and let $y \in Y$ and $A \s Y$.
Let $d(y, A) = \inf_{a \in A} d(y, a)$ denote the \defn{metric distance} from $y$ to $A$, and $\Hd(y, A) = \sup_{a \in A} d(y, a)$ the \defn{Hausdorff distance} from $y$ to $A$.
The \defn{radius} (or \defn{circumradius}) of a set $A \s Y$ is $\radius^Y (A) = \inf_{y \in Y} \Hd(y, A)$.
The set of \defn{centers} (or \defn{circumcenters}) of $A$ in $Y$ is $\Centers^Y (A) = \{y \in Y : \Hd(y, A) = \radius^Y (A)\}$.
Notice that $\radius^Y (A)$ and $\Centers^Y (A)$ depend on the ambient space $Y$.

\begin{lemma} [Centers Lemma]	\label{centers lemma}
Let $\G$ be a group acting geometrically on a proper $\CAT(0)$ space $X$.
Let $A$ be a nonempty closed, $\G$-invariant subset of $\bd X$.  For any folded sphere $K \subset \bd X$, the set $K \cap \Centers^{\bd X} (A)$ is a nonempty subset of $\Centers^{K} (A \cap K)$.  Moreover,
\[\radius^{\bd X} (A) = \radius^{K} (A \cap K) = \pi - \max_{K} \dT(\x, A \cap K) = \pi - \sup_{\bd X} \dT(\x, A),
\]
and the supremum is realized. \end{lemma}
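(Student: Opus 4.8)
The plan is to prove the Centers Lemma by first establishing the chain of equalities for the radii and then using the $\pi$-convergence machinery to locate centers inside a folded sphere. I would start from the observation that for any $p \in \bd X$ and any antipode $q \in \A(p)$ we have $\dT(p, q) \ge \pi$, so $\Hd(p, A) \ge \pi - \dT(p, A)$ is not the right inequality to chase directly; instead I would exploit the following: since $A$ is $\G$-invariant and closed (in the cone topology), and since $\G$ acts cocompactly, every point of $\bd X$ has a ``dual'' obtained by pulling. Concretely, fix a point $p \in \bd X$ realizing (approximately) $\sup_{\bd X} \dT(\x, A)$, say $\dT(p, A)$ is close to this supremum. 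Pick $\w \in \BG \setminus \G$ pulling from some point so that $\w^-$ can be taken with $\dT(p, \cdot)$ large; more precisely, I would use \corref{antipodal pi-convergence 2} together with density of $\G K$ (\lemref{density of round spheres}) to arrange that the relevant extremal data lives on the folded sphere $K$.

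The first key step is the identity $\radius^{\bd X}(A) = \pi - \sup_{\bd X} \dT(\x, A)$ with the supremum realized. For ``$\le$'': given any $y \in \bd X$ and any $a \in A$, the triangle inequality in the $\CAT(1)$ Tits metric together with the fact that $\Hd(y,A) = \sup_{a \in A} \dT(y,a)$ means I need a point $y$ that is within $\pi - \delta$ of all of $A$. Take $y = \w^+$ for a suitable $\w$ pulling from a point $p$ far from $A$: by \propref{pulling}(1), $T^\w$ maps $\bd X$ into the suspension $\susp(T^\w p, \w^+)$, so for every $a \in A$ we have $\dT(T^\w a, \w^+) = \pi - \dT(T^\w p, T^\w a) = \pi - \dT(p, a) \le \pi - \dT(p, A)$; since $A$ is $\G$-invariant and $T^\w$ is a limit of group elements, $T^\w a \in A$ (using closedness of $A$ in the cone topology and continuity of $\w \mapsto T^\w z$), hence $\w^+$ witnesses $\radius^{\bd X}(A) \le \pi - \dT(p,A)$. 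Letting $\dT(p,A) \to \sup_{\bd X}\dT(\x,A)$ gives ``$\le$''; for the reverse and for realization, I would run the same argument with $p$ chosen on $K$ (possible by density of $\G K$ and lower semicontinuity of $\dT$) to see the supremum is attained and that the center $\w^+$ can be placed on $K$.

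The second key step is showing $K \cap \Centers^{\bd X}(A)$ is nonempty and contained in $\Centers^K(A \cap K)$, and the equalities $\radius^{\bd X}(A) = \radius^K(A \cap K) = \pi - \max_K \dT(\x, A \cap K)$. The inclusion $K \cap \Centers^{\bd X}(A) \subseteq \Centers^K(A \cap K)$ is almost formal: if $y \in K$ has $\Hd(y, A) = \radius^{\bd X}(A)$, then $\Hd^K(y, A\cap K) \le \Hd(y, A) = \radius^{\bd X}(A) \le \radius^K(A \cap K)$, where the last inequality is the trivial direction (restricting to a subspace can only increase circumradius... wait, it decreases the set, so I must be careful) — here I would use that $K$ is a round sphere and that $A \cap K$, being the intersection with a folded sphere, controls $A$ via the folding map: applying the $K$-folding $\w_0$, every $a \in A$ maps into $T^{\w_0}K = K$ with $T^{\w_0}a \in A$, so $\dT(y, A \cap K) \ge \dT(T^{\w_0}y', A)$ for appropriate preimages, yielding $\radius^K(A\cap K) \ge \radius^{\bd X}(A)$. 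Combined with the easy reverse inequality this forces equality and forces $y \in \Centers^K(A \cap K)$. Nonemptiness of $K \cap \Centers^{\bd X}(A)$ then follows from the realization in step one, since we arranged the realizing center to lie on $K$.

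The main obstacle I anticipate is the interchange of the folding map with the suprema and the verification that $T^\w a \in A$ — this requires care because $z \mapsto T^\w z$ is not continuous for fixed $\w$, only $\w \mapsto T^\w z$ is. I would resolve this by approximating $\w$ by group elements $\g_n$ (using that $\G$ is dense in $\BG$), noting $\g_n a \in A$ since $A$ is $\G$-invariant, and then passing to a cone-topology limit using that $A$ is cone-closed; the Tits-metric estimates survive because $\dT$ is lower semicontinuous with respect to the cone topology and each $T^\w$ is $1$-Lipschitz on $\bdT X$. The second delicate point is making sure the extremal point $p$ can be taken on $K$ simultaneously with the center landing on $K$; here \lemref{density of round spheres} (density of $\G K$) plus lower semicontinuity of $\dT$ let me approximate, and the fact that $K$ is compact lets me extract a genuine maximizer, giving the ``supremum is realized'' clause.
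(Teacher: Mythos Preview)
The paper's proof is quite different from what you propose: it simply cites \cite[Lemma 21]{ricks-radius}, where the identical statement is proved with the angle metric $\angle$ in place of $\dT$.  Since $\angle = \dT$ on the sphere $K$, three of the four quantities agree immediately, and the only thing left is to show $\sup_{\bd X}\dT(\x,A)\le\pi$ (so that $\dT=\angle$ on the remaining term as well).  That inequality is dispatched by a short contradiction using \thmref{radius rigidity}: if $\dT(p,A)>\pi$ for some $p$, then $\angle(p,A)=\pi$, so the angle-metric version of the lemma gives $\radius^K(A\cap K)=0$, hence $A\cap K$ is a single point, hence $\bd X$ is a spherical join, hence $\diam(\bdT X)=\pi$, contradicting $\dT(p,A)>\pi$.

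Your first key step contains a genuine gap.  You argue that for each $a\in A$ one has $\dT(T^\w a,\w^+)\le\pi-\dT(p,A)$ and $T^\w a\in A$, and then conclude that ``$\w^+$ witnesses $\radius^{\bd X}(A)\le\pi-\dT(p,A)$''.  But $\Hd(\w^+,A)=\sup_{a\in A}\dT(\w^+,a)$, and you have only bounded $\dT(\w^+,T^\w a)$, not $\dT(\w^+,a)$; since $T^\w A$ may be a proper subset of $A$, this does not control the full Hausdorff distance.  The correct move is to apply $\pi$-convergence with $S\w$: for each $a\in A$ one gets $\angle(a,\w^+)+\angle(T^{S\w}a,\w^-)\le\pi$, and since $T^{S\w}a\in A$ and $\w^-=p$ this yields $\angle(a,\w^+)\le\pi-\angle(p,A)$, which is what is actually needed.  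Notice, however, that this argument is inherently about $\angle$, not $\dT$: \propref{pulling} only asserts $\angle(T^\w p,T^\w q)=\angle(p,q)$, not the Tits-metric equality you wrote, and nothing in your outline rules out $\dT(p,A)>\pi$.  That possibility is exactly the nontrivial content of the lemma over the cited angle-metric version, and the paper handles it not with $\pi$-convergence but with the splitting theorem (\thmref{radius rigidity}); your proposal never engages with this point.
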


\begin{proof}
\lemref{centers lemma} is stated in \cite[Lemma 21]{ricks-radius} with $\angle$ in place of $\dT$.
Since $\angle = \dT$ on $K$, it remains only to show $\sup_{\bd X} \dT(\x, A) \le \pi$.
So suppose, by way of contradiction, that $\dT(p, A) > \pi$ for some $p \in \bd X$.
Then $\angle (p, A) = \pi$, so by Lemma 21 of \cite{ricks-radius}, $\radius^{K} (A \cap K) = 0$.
Thus $A \cap K$ is a single point;
by \thmref{radius rigidity}, $\bd X$ splits as a join.
Hence $\diam (\bdT X) = \pi$, which contradicts $\dT(p, A) > \pi$.
\end{proof}

\subsection{Minimal sets}

\begin{definition}
Call $M \s \bd X$ \defn{minimal} if it is a minimal nonempty, closed, $\G$-invariant subset of $\bd X$. \end{definition}

Note that if $M$ is a minimal set, then $\BG$ acts transitively on $M$.

\begin{lemma}
Let $M,N$ be minimal sets.  For every $m \in M$, there is some $n \in N$ such that $\dT(m, n) = \dT(M, N) := \inf \setp{\dT(p, q)}{p \in M \text{ and } q \in N}$. \end{lemma}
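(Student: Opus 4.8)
The plan is to prove that for each $m \in M$ the infimum $\inf_{n \in N}\dT(m,n)$ is actually \emph{attained}, that the resulting function $m \mapsto \dT(m,N)$ is \emph{constant} on $M$, and that its constant value is $\dT(M,N)$. The statement is then immediate: the minimizing $n$ for a given $m$ works.

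First I would fix $m \in M$ and show the infimum is realized. Since $X$ is proper, $\bdc X$ is compact, so the cone-closed set $N$ (closed and $\G$-invariant by the definition of a minimal set, with ``closed'' meaning cone-closed) is compact. Because $\dT$ is lower semicontinuous for the cone topology, the function $n \mapsto \dT(m,n)$ is lower semicontinuous on the compact space $N$ (apply semicontinuity to the constant sequence at $m$), hence attains its infimum; write $\dT(m,N)$ for this minimum, and note $\inf_{m \in M}\dT(m,N) = \dT(M,N)$. I would also record the observation that $T^{\w}N \subseteq N$ for every $\w \in \BG$: if $n \in N$ and $(\g_i)$ is a net in $\G$ converging to $\w$ in $\BG$, then $\g_i n \to T^{\w}n$ in $\bdc X$, while each $\g_i n \in N$ and $N$ is cone-closed, so $T^{\w}n \in N$.

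Next I would use minimality to get constancy. Given $m, m' \in M$, minimality of $M$ gives that $\BG$ acts transitively on $M$, so choose $\w \in \BG$ with $T^{\w}m = m'$. Pick $n \in N$ with $\dT(m,n) = \dT(m,N)$. Since $T^{\w}$ is $1$-Lipschitz on $\bdT X$ and $T^{\w}n \in N$ by the previous observation, we get $\dT(m',N) \le \dT(T^{\w}m, T^{\w}n) \le \dT(m,n) = \dT(m,N)$. Swapping the roles of $m$ and $m'$ yields the reverse inequality, so $m \mapsto \dT(m,N)$ is constant on $M$, and its value is $\inf_{m}\dT(m,N) = \dT(M,N)$. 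Hence the $n$ realizing $\dT(m,N)$ satisfies $\dT(m,n) = \dT(M,N)$.

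There is no deep obstacle here; the one point requiring care is that $\dT$ is only \emph{lower} semicontinuous (not continuous) in the cone topology. This is exactly what lets the minimum in the first step exist even though $\bdT X$ need not be Tits-compact, and it is also why the $1$-Lipschitz estimate in the second step must be read in the direction $\dT(T^{\w}m, T^{\w}n) \le \dT(m,n)$. No $\pi$-convergence or structure theory is needed.
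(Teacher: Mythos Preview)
Your proof is correct and uses essentially the same ingredients as the paper's: compactness of $N$ in the cone topology, lower semicontinuity of $\dT$, transitivity of the $\BG$-action on a minimal set, the observation $T^{\w}N\subseteq N$, and the $1$-Lipschitz property of $T^{\w}$. The only difference is organizational: the paper first picks a global minimizing pair $(m',n')\in M\times N$ and then translates $m'$ to $m$ via some $\w\in\BG$, setting $n=T^{\w}n'$; you instead first realize $\dT(m,N)$ for each $m$ and then show this quantity is constant in $m$.
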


\begin{proof}
Let $M,N$ be minimal and $m \in M$.
Since $M,N$ are closed, by lower semicontinuity of the Tits distance there exist $m' \in M$ and $n' \in N$ such that $\dT(m, n) = \dT(M, N)$.
Because $\BG$ acts transitively on $M$, there is some $\w \in \BG$ such that $T^{\w} m' = m$.
Let $n = T^{\w} n'$.  By $1$-Lipschitzness of $T^{\w}$, $\dT(m, n) = \dT(M, N)$.
\end{proof}

\begin{lemma} [cf. Corollary 1.6 of \cite{bb}]		\label{antipodal to minimal}
Let $M$ be minimal.
If $m \in M$ and $p \in \A(m)$, then $\D(p) = M$ and $\D(m) = \cl{\G p}$. \end{lemma}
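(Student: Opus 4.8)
The plan is to prove $\D(p)=M$ by showing that $\D(p)$ is a nonempty, closed, $\G$-invariant subset of the minimal set $M$, and then to deduce the statement about $\D(m)$ from the symmetry of $\G$-duality.

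The two ``easy'' containments come directly from \corref{antipodal pi-convergence 2}: since $p\in\A(m)$ the points $p$ and $m$ are antipodal, so $\D(p)\subseteq\cl{\G m}$ and $\D(m)\subseteq\cl{\G p}$; and $\cl{\G m}=M$ because $M$ is minimal. Thus $\D(p)\subseteq M$ from the start, and it suffices to prove the reverse inclusion.

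Next I would establish two structural properties of $\D(q)$ for an arbitrary $q\in\bd X$. First, $\D(q)$ is nonempty: by cocompactness we may pull from $q$, obtaining $\w\in\BG\setminus\G$ with $\w^-=q$; then $S\w\in\BG\setminus\G$ satisfies $(S\w)^+=\w^-=q$, so $\w^+\in\D(q)$. Second, $\D(q)$ is closed and $\G$-invariant. Closedness uses compactness of $\BG$ and continuity of $\w\mapsto T^{\w}x$ and of $S$: given a sequence in $\D(q)$ converging to a point, pass to a subnet along which the witnessing elements of $\BG$ converge, and take the limit $\w$; then $\w^+=q$ while $\w^-$ is the limit point, and $\w\notin\G$ since $T^{\w}x\in\bd X$. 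Invariance holds because, for $\w$ with $\w^+=q$, $\w^-=q'$, and $\g\in\G$, the element $\w\g\in\BG\setminus\G$ has $(\w\g)^+=\w^+=q$ (the attracting point does not depend on the point of $X$) and $(\w\g)^-=\g^{-1}q'$, using $S(\w\g)=\g^{-1}S\w$; hence $\G$ permutes $\D(q)$. I expect this invariance/closedness step to be the main technical point --- routine, but dependent on the one-sided continuity of multiplication in the semigroup $\BG$ and on how $S$ interacts with right translation by $\G$.

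Granting this, $\D(p)$ is a nonempty, closed, $\G$-invariant subset of the minimal set $M$, so $\D(p)=M$. In particular $m\in\D(p)$, that is, $p$ and $m$ are $\G$-dual; applying $S$ to a witnessing $\w$ shows $m$ and $p$ are $\G$-dual, so $p\in\D(m)$. Since $\D(m)$ is closed and $\G$-invariant, $\cl{\G p}\subseteq\D(m)$, and combined with $\D(m)\subseteq\cl{\G p}$ this gives $\D(m)=\cl{\G p}$.
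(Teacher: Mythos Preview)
Your proof is correct and follows exactly the same route as the paper's: use \corref{antipodal pi-convergence 2} to get $\D(p)\subseteq M$ and $\D(m)\subseteq\cl{\G p}$, invoke minimality of $M$ to upgrade the first containment to equality, and then use that $m\in\D(p)$ implies $p\in\D(m)$ to obtain $\cl{\G p}\subseteq\D(m)$. The only difference is that the paper's proof is terse and takes for granted that $\D(q)$ is nonempty, closed, and $\G$-invariant, whereas you spell these facts out; your verification of them is sound.
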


\begin{proof}
Let $m \in M$ and $p \in \A(m)$.  Then $\D(p) \s \cl{\G m} = M$ and $\D(m) \s \cl{\G p}$ by \corref{antipodal pi-convergence 2}.  But then $\D(p) = M$ by minimality of $M$, hence $p$ and $m$ are $\G$-dual, and therefore $\cl{\G p} \s \D(m)$.
\end{proof}

\begin{lemma}			\label{closed involutive pair}
Let $M,N$ be minimal.
If $N \cap \A(M)$ is not empty, then $\A(M) \s N$ and $\A(N) \s M$.
In particular, $M \cup N$ is a closed, involutive subset of $\bd X$. \end{lemma}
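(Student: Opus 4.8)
The plan is to run everything through \lemref{antipodal to minimal} and \corref{antipodal pi-convergence}, using only the extra observation that every closed, $\G$-invariant subset of $\bdc X$ is carried into itself by each limit operator $T^{\w}$.

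First I would unpack the hypothesis. Saying $N \cap \A(M) \neq \emptyset$ means there are $m_0 \in M$ and $n_0 \in N$ with $\dT(m_0, n_0) \ge \pi$; equivalently $n_0 \in \A(m_0)$ and $m_0 \in \A(n_0)$. This condition is symmetric in $M$ and $N$, so it suffices to prove $\A(M) \s N$, since exchanging the roles of $M$ and $N$ then yields $\A(N) \s M$ for free, and the ``in particular'' clause follows at once: $M \cup N$ is closed as a union of two closed sets, and $\A(M \cup N) = \A(M) \cup \A(N) \s N \cup M = M \cup N$.

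To show $\A(M) \s N$, fix an arbitrary $p \in \A(M)$, say $p \in \A(m_1)$ with $m_1 \in M$. By \lemref{antipodal to minimal} applied to $M$, $m_1$, and $p$, we get $\D(p) = M$; in particular $m_0 \in \D(p)$, so $p$ and $m_0$ are $\G$-dual, i.e.\ there is $\w \in \BG \setminus \G$ with $\w^+ = p$ and $\w^- = m_0$. Since $n_0 \in \A(m_0) = \A(\w^-)$, \corref{antipodal pi-convergence} gives $T^{\w} n_0 = \w^+ = p$. But $n_0$ lies in the closed, $\G$-invariant set $N$, and $T^{\w} n_0 = (\beta\rho_{n_0})(\w)$ is a limit of points $\g n_0 \in N$ along a net $\g \to \w$ in $\G \subset \BG$; as $N$ is compact this forces $p = T^{\w} n_0 \in N$. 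Hence $\A(M) \s N$, and by symmetry $\A(N) \s M$.

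The only point that deserves a line of justification is the invariance $T^{\w} N \s N$, but this is precisely the (already-invoked) fact that $\BG$ acts on closed $\G$-invariant subsets of the compact space $\bdc X$; granting that, there is no real obstacle, since the remainder is a direct chain of the cited results rather than a genuine argument.
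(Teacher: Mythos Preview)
Your argument is correct and follows essentially the same route as the paper. The paper's proof applies \lemref{antipodal to minimal} twice---once to $(m_0,n_0)$ to get $\D(m_0)=\cl{\G n_0}=N$, and once to $(m_1,p)$ to get $\D(p)=M$, then uses symmetry of $\D$ to conclude $p\in\D(m_0)=N$---whereas you invoke \lemref{antipodal to minimal} once and then unroll the second application by hand via \corref{antipodal pi-convergence} and the $\BG$-invariance of $N$; the content is the same.
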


\begin{proof}
Suppose $m \in M$ and $n \in N \cap \A(m)$.  If $p \in \A(M)$ then $\cl{\G p} = \D(m) = N$ by \lemref{antipodal to minimal}, hence $p \in N$.
\end{proof}

\begin{remark}
If $M,N$ are proper subsets of $\bd X$, it follows that $M \cup N$ is proper, too, as $\dT (M,N) > 0$ or $M = N$.  However, in order to apply \thmref{lytchak rigidity} we need to know that $\bd X$ is geodesically complete.
\end{remark}

\section{One-Dimensional Boundary}

\begin{lemma}			\label{infinite case}
Suppose $\dim(\bdT X) = 1$.  Let $K \s \bd X$ be a round sphere in $\bd X$ and $M$ be a minimal set.  If $M \cap K$ is infinite then $M = \bd X$. \end{lemma}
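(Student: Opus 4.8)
The plan is to show that the hypothesis forces $\bd X$ to be connected, one-dimensional, and ``full'' in a way that makes it coincide with a round sphere's orbit closure, then invoke density (\lemref{density of round spheres}) and minimality. Here is the outline I would follow.

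First I would analyze the structure of $M \cap K$ inside the round sphere $K$. Since $\dim(\bdT X) = 1$ and $K$ is round, $K$ is a circle (a $1$-sphere with the angle metric), so $M \cap K$ is a closed, infinite subset of a circle of circumference $2\pi$. A key point is that $M$ is $\BG$-invariant and $\BG$ acts transitively on $M$; combined with the folding corollary, some $\w \in \BG$ folds $K$, and via \lemref{centers lemma} and \thmref{radius rigidity} one controls how $M \cap K$ sits in $K$. The first substantive step is to show that $M \cap K$ cannot be nowhere dense in the circle $K$: if it were a closed infinite nowhere-dense set, I would use foldings together with \corref{antipodal pi-convergence} (antipodes of $\w^-$ are sent to $\w^+$) to produce, from accumulation points of $M \cap K$, too much structure --- specifically, I would aim to contradict one-dimensionality of $\bdT X$ or produce an antipode relation forcing $M \cap K$ to contain an arc. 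So the conclusion of this step is that $M \cap K$ contains a nondegenerate arc $\alpha \subset K$.

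Next, having an arc $\alpha \subseteq M \cap K$, I would bootstrap to all of $K$. Using \corref{short paths}, the arc $\alpha$ is a geodesic arc in $M$; pulling (\propref{pulling}) and folding move $\alpha$ around $K$ isometrically while staying in the closed $\G$-invariant set $M$, and since $\BG$ acts transitively on the closed set $M$, translates of $\alpha$ cover $K$. Hence $K \subseteq M$. Then \lemref{density of round spheres} gives that $\G K$ is dense in $\bdc X$; but $K \subseteq M$ and $M$ is closed and $\G$-invariant, so $\cl{\G K} \subseteq M$, whence $M = \bd X$.

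The main obstacle is the first step: ruling out the possibility that $M \cap K$ is an infinite closed set with empty interior in the circle $K$ (e.g. a Cantor-like set). This is where the one-dimensionality of $\bdT X$ and the rigidity inputs must do real work --- I expect to argue that an accumulation point $p$ of $M \cap K$, together with a folding $\w$ with $\w^- $ suitably placed, forces via \corref{antipodal pi-convergence 2} and \lemref{antipodal to minimal} that $\D(p)$ or $\A(p)$ meets $K$ in a set whose preimage under the folding is an arc, contradicting nowhere-density; alternatively one shows $M$ would contain a tripod or a point of valence $\ge 3$ in its link, violating $\dim(\bdT X)=1$. Getting this dichotomy clean is the crux; the rest is propagation via the transitive $\BG$-action and density of round sphere orbits.
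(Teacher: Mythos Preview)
Your proposal has a genuine gap at exactly the point you flag as the ``main obstacle'': you never actually rule out the possibility that $M \cap K$ is an infinite closed nowhere-dense subset of the circle. The ideas you sketch for this step --- $\pi$-convergence of antipodes, tripods, link valence --- are not made precise, and it is not clear any of them succeed. A Cantor set in $K$ does not obviously create a tripod in $\bdT X$, and the dual/antipode relations in \corref{antipodal pi-convergence 2} and \lemref{antipodal to minimal} do not by themselves force an arc into $M \cap K$. The second half of your outline (once an arc is found, propagate to $K \subseteq M$ and then invoke \lemref{density of round spheres}) is fine, but it rests on an unestablished first step.

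The paper sidesteps this arc-versus-Cantor dichotomy entirely with a different mechanism. Fold $K$ to a circle $L = T^{\w} K$ and suppose for contradiction $M \cap L \neq L$. Then only finitely many points of $L$ maximize $\dT(\x, M \cap L)$, so by the Centers Lemma $\Centers^{\bd X}(M) \cap L$ is finite; choose a \emph{second} minimal set $N \subseteq \Centers^{\bd X}(M)$, so $N \cap L$ is finite. Now for each of the infinitely many $m \in M \cap K$ there is $n \in N$ with $\dT(m,n) = \dT(M,N)$; applying $T^{\w}$ (isometric on $K$, sending $\bd X$ into $L$) yields infinitely many distinct $T^{\w} m \in M \cap L$, each at distance exactly $\dT(M,N)$ from some point of $N \cap L$. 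But a fixed point on a circle has at most two points at a given distance, forcing $N \cap L$ to be infinite --- a contradiction. The introduction of the auxiliary minimal set $N$ inside the centers, together with this pigeonhole count on the folded circle, is the idea you are missing.
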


\begin{proof}
Let $\w \in \BG$ be a $K$-folding, and let $L = T^{\w} K$.
We claim $M \cap L = L$.
So suppose, by way of contradiction, that $M \cap L \neq L$.
Let $N \s \Centers^{\bd X} (M)$ be minimal.
There are only finitely many $p \in L$ that maximize $\dT(\x, M \cap L)$, hence $\Centers^{\bd X} (M) \cap L$ is finite, and thus $N \cap L$ is finite, too.
Now for each $m \in M \cap K$ there exists $n \in N$ such that $\dT(m, n) = \dT(M, N)$.  Applying the map $T^{\w}$, we find $T^{\w} m \in M \cap L$, $T^{\w} n \in N \cap L$, and $\dT(T^{\w} m, T^{\w} n) = \dT(M, N)$.
Now $T^{\w} \res{K}$ is injective, hence there are infinitely many points $T^{\w} m \in M \cap L$.
Each $n' \in N \cap L$ can have $\dT(T^{\w} m, n') = \dT(M, N)$ for at most two $T^{\w} m \in M \cap L$, thus there are infinitely many points in $N \cap L$.
This contradicts finiteness of $N \cap L$; we therefore conclude that $M \cap L = L$, and thus $M = \bd X$ by \lemref{density of round spheres}.
\end{proof}

\begin{standing hypothesis}
From now on, assume $\dim(\bdT X) = 1$ and $\bd X$ is not minimal.  Let $K \s \bd X$ be a folded round circle and $M \s \bd X$ minimal.
\end{standing hypothesis}

\begin{definition}
Let $A \subset K$ be nonempty.  We say that $A$ is \defn{uniformly spaced around $K$} if $K \setminus A$ is the union of open arcs of the same length.  For $A \subset \bd X$, we say $A$ is \defn{uniformly spaced around $K$} if $A \cap K$ is uniformly spaced. \end{definition}

\begin{remark}
If $A \subset K$ is uniformly spaced with $\ell = \abs{A} > 1$, then one can write $A = \set{a_1, a_2, \dotsc, a_\ell}$ where every distance $\dT(a_i, a_{i+1}) = \frac{2\pi}{\ell}$ (taking indices mod $\ell$). \end{remark}

\begin{lemma}			\label{uniformly spaced}
If $M$ minimizes $\abs{M \cap K}$ among minimal sets $M \s \bd X$, then $M$ is uniformly spaced around $K$. \end{lemma}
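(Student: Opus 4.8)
The plan is to pit the minimality of $\ell = \abs{M \cap K}$ against the (necessarily few) circumcenters of $M$, via \lemref{centers lemma}.

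First I would check $1 \le \ell < \infty$ and reduce to the case $\ell \ge 2$. Finiteness is \lemref{infinite case}: if $M \cap K$ were infinite then $M = \bd X$, contradicting that $M$ is minimal while $\bd X$ is not. For $\ell \ge 1$, note that since $K$ is a folded sphere we have $T^{\w}(\bd X) = K$ for some $\w \in \BG$, and as $M$ is nonempty, closed, and $\G$-invariant, $\emptyset \ne T^{\w} M \s M \cap K$. If $\ell = 1$ then $K \setminus (M \cap K)$ is a single open arc and $M$ is trivially uniformly spaced, so assume $\ell \ge 2$. Write $M \cap K = \set{a_1, \dots, a_\ell}$ in cyclic order, and let $g_i$ be the length of the open arc component of $K \setminus (M \cap K)$ running from $a_i$ to $a_{i+1}$ (indices mod $\ell$), so $\sum_i g_i = 2\pi$. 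Put $g = \max_i g_i$; the conclusion is exactly that every $g_i$ equals $g$ (equivalently $g = 2\pi / \ell$), i.e.\ that no gap is strictly sub-maximal.

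The geometric input is an elementary computation inside the circle $K$: one checks that $\radius^K(M \cap K) = \pi - g/2$ and that every element of $\Centers^K(M \cap K)$ is antipodal in $K$ to the midpoint of some \emph{maximal} gap (one with $g_i = g$). Indeed, if $\Hd(y, M \cap K) = \pi - g/2$, then no $a_j$ is antipodal to $y$, so the antipode $y^\ast$ of $y$ lies in some gap $(a_i, a_{i+1})$; using $\dT(y, z) + \dT(y^\ast, z) = \pi$ for all $z \in K$, the bounds $\dT(y, a_i) , \dT(y, a_{i+1}) \le \pi - g/2$ become $\dT(y^\ast, a_i), \dT(y^\ast, a_{i+1}) \ge g/2$, and a brief case check (the only subtlety being that a gap may exceed $\pi$, so $M \cap K$ could lie in an arc shorter than a semicircle) forces $g_i = g$ with $y^\ast$ the midpoint of that gap. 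In particular $\abs{\Centers^K(M \cap K)}$ is at most the number of maximal gaps, hence at most $\ell$, with equality only if every gap is maximal.

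Finally I would feed this into the Centers Lemma. The set $B = \Centers^{\bd X}(M)$ is nonempty by \lemref{centers lemma}, is $\G$-invariant, and is closed, because $y \mapsto \Hd(y, M) = \sup_{m \in M} \dT(y, m)$ is lower semicontinuous for the cone topology (a supremum of the lower semicontinuous functions $\dT(\x, m)$) and $B = \setp{y}{\Hd(y, M) \le \radius^{\bd X}(M)}$. Choose a minimal set $N \s B$. By \lemref{centers lemma} applied to the folded sphere $K$ and the $\G$-invariant set $M$, $N \cap K \s B \cap K \s \Centers^K(M \cap K)$, so $\abs{N \cap K}$ is at most the number of maximal gaps of $M \cap K$. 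But $N$ is a minimal set and $\ell$ is minimal among minimal sets, so $\abs{N \cap K} \ge \ell$. Therefore the number of maximal gaps is at least $\ell$, hence equals $\ell$, so all $\ell$ gaps coincide and $M$ is uniformly spaced around $K$. The one step requiring genuine care is the circle computation of the previous paragraph; the rest is bookkeeping around \lemref{centers lemma}, and the single real idea is to apply the minimality of $\ell$ to the auxiliary minimal set $N$ hiding inside the small circumcenter set.
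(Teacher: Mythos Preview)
Your proof is correct and follows essentially the same route as the paper's: bound $\Centers^K(M\cap K)$ by the number of maximal gaps via the circle computation, invoke the Centers Lemma to place $\Centers^{\bd X}(M)\cap K$ inside that set, and then use the minimality of $\ell$ on a minimal subset of the centers to force all gaps to be maximal. The paper compresses this into three sentences and leaves the passage to a minimal subset (and the closedness of the centers set) implicit; you have spelled these steps out, including the edge case $\ell=1$ and the lower semicontinuity argument for closedness, which is a genuine improvement in clarity.
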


\begin{proof}
Let $A = \Centers^{\bd X} M$.  By \lemref{centers lemma}, every point of $A \cap K$ is antipodal to a point on $K$ of maximum distance to $M \cap K$.  Thus $\abs{A \cap K} \le \abs{M \cap K}$, with equality if and only if $M \cap K$ is uniformly spaced.
\end{proof}

The following result is an immediate corollary of \lemref{closed involutive pair}.

\begin{lemma}		\label{even involutives}
If $\abs{M \cap K}$ is even, and $M$ is uniformly spaced around $K$, then $\A(M) \s M$.  In other words, $M$ is a closed, involutive subset of $\bd X$. \end{lemma}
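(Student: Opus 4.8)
The plan is to observe that the hypotheses force $M$ to contain a pair of antipodal points of $K$, after which \lemref{closed involutive pair}, applied with $N = M$, does all the work; this is why the result is an immediate corollary.

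First I would unpack the hypothesis. Since $M$ is uniformly spaced around $K$, the set $M \cap K$ is nonempty, so $\ell := \abs{M \cap K}$ is a positive even integer; in particular $\ell \ge 2$. By the remark following the definition of "uniformly spaced," we may enumerate $M \cap K = \set{a_1, a_2, \dotsc, a_\ell}$ with $\dT(a_i, a_{i+1}) = \frac{2\pi}{\ell}$ for all $i$ (indices mod $\ell$). Because $\ell$ is even, $\frac{\ell}{2}$ is an integer, and travelling $\frac{\ell}{2}$ steps around the circle $K$ starting at $a_i$ reaches $a_{i + \ell/2}$ at Tits distance $\frac{\ell}{2} \cdot \frac{2\pi}{\ell} = \pi$. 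Since $K$ is a round circle (isometric to the standard unit circle), a pair of points at Tits distance $\pi$ is genuinely antipodal, so $\dT(a_i, a_{i + \ell/2}) \ge \pi$, i.e.\ $a_{i + \ell/2} \in \A(a_i)$.

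Next I would assemble this into the conclusion. Take $m = a_1 \in M$ and $n = a_{1 + \ell/2} \in M$; then $n \in \A(m) \s \A(M)$, so $n \in M \cap \A(M)$, and in particular $M \cap \A(M) \neq \emptyset$. Applying \lemref{closed involutive pair} with $N = M$ gives $\A(M) \s M$, so $M$ is involutive. As $M$ is a minimal set it is closed by definition, so $M$ is a closed, involutive subset of $\bd X$, as claimed.

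There is no real obstacle here: once the antipodal pair $\set{a_i, a_{i + \ell/2}} \s M \cap K$ is exhibited, the statement is literally an instance of \lemref{closed involutive pair}. The only points requiring (minimal) care are that the assumption "uniformly spaced around $K$" carries the nonemptiness of $M \cap K$ needed to run the enumeration, and that Tits distance exactly $\pi$ on a round circle really does place $a_{i + \ell/2}$ in $\A(a_i)$ (which it does, since $\A(p) = \setp{q}{\dT(p,q) \ge \pi}$).
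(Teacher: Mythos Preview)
Your proof is correct and is exactly the argument the paper has in mind: exhibit an antipodal pair in $M \cap K$ from the even uniform spacing, then invoke \lemref{closed involutive pair} with $N = M$. The paper states only that the lemma is an immediate corollary of \lemref{closed involutive pair}, and your write-up supplies precisely the one-line verification that $M \cap \A(M) \neq \emptyset$ needed to apply it.
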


\begin{definition}
A set $A \s \bd X$ with at least two elements is called \defn{incompressible} if $\dT(T^{\w} p, T^{\w} q) = \dT(p, q)$ for every $p,q \in A$ and $\w \in \BG$. \end{definition}

Note that if a pair $\set{p,q}$ is incompressible, then so is the whole geodesic arc $[p,q]$.

\begin{lemma}			\label{even case}
If $M$ minimizes $\abs{M \cap K}$ among minimal sets $M \s \bd X$, and $\abs{M \cap K}$ is even, then $\bd X$ either is a spherical building or splits as a spherical join. \end{lemma}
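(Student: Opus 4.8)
The plan is to recognize $M$ itself as a proper, closed, involutive subset of $\bd X$ and then invoke Lytchak's rigidity theorem (\thmref{lytchak rigidity}); the one real difficulty is supplying the geodesic-completeness hypothesis that theorem requires.

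First I would dispatch the routine reductions. Since $\bd X$ is not minimal while $M$ is minimal, $M$ is a proper subset of $\bd X$; by \lemref{infinite case} the quantity $\ell := \abs{M \cap K}$ is finite (otherwise $M = \bd X$), and $\ell \geq 2$ since it is even and positive ($M \cap K \neq \emptyset$ by \lemref{centers lemma}). By \lemref{uniformly spaced}, $M$ is uniformly spaced around $K$, so \lemref{even involutives} makes $M$ a closed, involutive subset of $\bd X$. If $\bd X$ splits as a spherical join there is nothing to prove, so assume it does not; then \thmref{radius rigidity} applied to the nonempty closed $\G$-invariant set $M$ forbids $M \cap K$ from lying in a proper subsphere of the circle $K$. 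As a proper subsphere of $K$ contains at most two points, this forces $\ell \geq 3$, hence $\ell \geq 4$. (This already settles the subcase $\ell = 2$: there $M \cap K$ is an antipodal pair, a $0$-sphere in $K$, so by \thmref{radius rigidity} $\bd X$ does split as a spherical join.)

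It remains to treat the case where $\bd X$ does not split as a spherical join, so $\ell \geq 4$, and show that $\bd X$ is then a spherical building. The target is \thmref{lytchak rigidity} applied with $Y = \bdT X$ and $A = M$: the space $\bdT X$ is finite-dimensional by Kleiner, and $M$ is closed, involutive, and proper, so the theorem applies as soon as $\bdT X$ is known to be geodesically complete. Establishing that completeness is the crux, and I expect it to be the main obstacle: it is precisely the gap flagged in the remark after \lemref{closed involutive pair}. Here I would exploit $\dim(\bdT X) = 1$, under which $\bdT X$ is geodesically complete exactly when it has no \emph{leaves} --- points $p$ whose link $\Lk(p)$, a $0$-dimensional $\CAT(1)$ space, is a single point (when $\Lk(p)$ has two or more points they lie at angle $\geq \pi$, so every geodesic arc ending at $p$ prolongs). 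No point of the round circle $K$ is a leaf of $\bdT X$, since $\Lk_{\bdT X}(p) \supseteq \Lk_K(p)$ is a $0$-sphere; the leaf set is $\G$-invariant and $\G K$ is dense in $\bdc X$ by \lemref{density of round spheres}, so any leaf must appear as a cone-topology limit of non-leaves. Ruling that out --- using the folding isometries of $K$, the uniform spacing of $M$ on each round circle, \corref{short paths}, and \thmref{radius rigidity} (the closure of the leaf set is a closed $\G$-invariant set, so it cannot meet $K$ in a proper subsphere without $\bd X$ splitting as a join) --- is the substantive part. Granting it, \thmref{lytchak rigidity} gives that $\bdT X$ is a spherical building or a spherical join, and the latter has been excluded, so $\bd X$ is a spherical building.
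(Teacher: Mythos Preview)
Your overall architecture matches the paper's: reduce to showing that $\bdT X$ is geodesically complete, observe that $M$ is proper, closed, and involutive (via \lemref{uniformly spaced} and \lemref{even involutives}), and then invoke \thmref{lytchak rigidity}. The routine reductions and the separate handling of $\ell = 2$ via \thmref{radius rigidity} are fine.

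The gap is exactly where you flag it: the geodesic-completeness step is not actually carried out, and the tools you list do not assemble into a proof. Your leaf-set argument goes in the wrong direction. You note that the cone-closure $A$ of the leaf set is closed and $\G$-invariant, and then cite \thmref{radius rigidity} to say that $A \cap K$ cannot lie in a proper subsphere of $K$. But that conclusion does not help you: it tells you $A \cap K$ is \emph{large} (at least three points, not contained in an antipodal pair), whereas you need $A$ to be \emph{empty}. Nothing you have written forces $A \cap K$ to be small, so no contradiction arises. Likewise, the observation that any leaf is a cone-limit of non-leaves (points of $\G K$) carries no information, since being a leaf is a Tits-local property and the cone topology does not see it.

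The paper closes this gap by an entirely different mechanism that you do not mention: it introduces a \emph{second} minimal set $N$ inside $\Centers^{\bd X}(M)$, uses the minimality of $\ell$ to force $N \cap K$ to consist of all the midpoints of the arcs of $K \setminus M$, and hence $\dT(M,N) = \pi/\ell$. Adjacent pairs $m \in M \cap K$, $n \in N \cap K$ then realise $\dT(M,N)$ and are therefore incompressible under every $T^{\w}$. This tiles $K$, and then (by \lemref{density of round spheres} and incompressibility) all of $\bd X$, by arcs $[m,n]$ of length $\pi/\ell$; pulling from a hypothetical branch point in the interior of such an arc would compress it, so branching occurs only at the endpoints. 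Geodesic completeness then follows by extending any such $[m,n]$ past its endpoints using an antipode of $n$ on a round circle (Balser--Lytchak) together with the bound $\dT(m,p) \le \pi$ from Ballmann--Buyalo (absence of rank-one axes). The incompressible-arc structure, and the auxiliary minimal set $N$, are the missing ideas.
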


\begin{proof}
Suppose $\abs{M \cap K}$ is even, and $M$ is uniformly spaced around $K$.  Let $C = \Centers^{\bd X} (M)$.  By our hypotheses on $M$, every $c \in C \cap K$ lies on the midpoint of an open arc of $K \setminus A$.  But $N \s C$ for some minimal set $N$, and now $N \cap K$ must contain every such midpoint by choice of $M$.  Thus $\dT(M,N) = \frac{\pi}{\ell}$ for $\ell = \abs{M \cap K}$.  Hence every pair of adjacent $m \in M \cap K$ and $n \in N \cap K$ has $\dT(m,n) = \dT(M,N)$, and is therefore incompressible.  Thus $K$ is covered by incompressible closed arcs $[m,n]$ of length $\frac{\pi}{\ell}$ with endpoints $m \in M$ and $n \in N$.  Therefore, $\bd X$ is also covered by such arcs by \lemref{density of round spheres} and the definition of incompressible.

Now for an incompressible arc $[m,n]$, the interior $(m,n)$ is open in $\bd X$---i.e. there is no branching off $(m,n)$---because otherwise we could pull from some $p \in \bd X$ on such a branch, and $[m,n]$ would get compressed when mapped into the $1$-dimensional suspension $\susp (T^\w p, q) \subset \bd X$.

We now show that $\bd X$ is geodesically complete.  Let $[m,n]$ be an incompressible arc.  By Balser and Lytchak \cite[Lemma 3.1]{bl-centers}, there is some $p \in K \cap \A(n)$.  Since $M \neq \bd X$, $X$ does not have a rank one axis, and therefore $\dT(m,p) \le \pi$ by Ballmann and Buyalo \cite[Proposition 1.10]{bb}; hence the arc $[m,n]$ must extend past $m$ in $\bd X$.  Similarly, $[m,n]$ must extend past $n$ in $\bd X$.  It follows that every geodesic segment in $\bd X$ is extendible in both directions, proving our claim.

By \corref{even involutives}, we see that $M$ is a proper, Tits-closed, involutive subset of $\bd X$.  Therefore, $\bd X$ is a spherical building or spherical join by \thmref{lytchak rigidity}.
\end{proof}

\begin{lemma}			\label{odd case}
Suppose $\dim(\bdT X) = 1$ and $M \subsetneq \bd X$ is a proper minimal subset of $\bd X$.  Let $K \subset \bd X$ be a folded round sphere, and let $\ell = \abs{M \cap K}$.  Suppose that $M$ is chosen to minimize $\ell$, among all minimal sets $M \subset \bd X$, and suppose $\ell \ge 3$ is odd.  Then there exists some $\W \subseteq \bd X$ such that all the following hold:
\begin{enumerate}
\item			\label{odd:convex}
$\W$ is complete and convex under the Tits metric on $\bd X$.
\item			\label{odd:union}
$\W$ is a (not disjoint) union of round circles.
\item			\label{odd:branching}
Branching in $\W$ is restricted to $2 \ell$ points on each circle, uniformly spaced around the circle; these $2 \ell$ points are precisely the points of $M \cap \A(M)$ on the circle.
\item			\label{odd:trees}
$\bd X$ is the union of $\W$ with a (possibly empty) collection of isometrically embedded $\R$-trees.  Each tree $T$ intersects $\W$ at a single point $m = m(T) \in M$, and $\dT(m,p) \le \frac{\pi}{\ell}$ for all $p \in T$.
\end{enumerate}
Furthermore, if $\diam (\bdT X) = \pi$ then $\bd X$ is a spherical building. \end{lemma}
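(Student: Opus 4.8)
The plan is to use the hypothesis $\diam(\bdT X) = \pi$ to collapse the auxiliary trees of \itemref{odd:trees}, upgrade the resulting space to a geodesically complete one, and then conclude with Lytchak's rigidity theorem. First I would show that $\bd X = \W$. Suppose not; by \itemref{odd:trees} there is a tree $T$ with root $m = m(T) \in M$ and a point $\xi \in T$ with $\dT(m, \xi) > 0$. By \itemref{odd:union} the point $m$ lies on a round circle $C \subseteq \W$; let $q \in C$ be its antipode on $C$, so $\dT(m, q) = \pi$. Since $\dT(m,p) \le \tfrac{\pi}{\ell} \le \tfrac{\pi}{3}$ for all $p \in T$, we have $\diam T \le \tfrac{2\pi}{3} < \pi$, so $q \notin T$; and since $T$ is attached to the rest of $\bd X$ only at $m$ (the tree is pendant, by the pulling argument used for incompressible arcs in the proof of \lemref{even case}), every path from $\xi$ to $q$ must pass through $m$. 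Hence $\dT(\xi, q) = \dT(\xi, m) + \dT(m, q) > \pi$, contradicting $\diam(\bdT X) = \pi$. So every tree is trivial and $\bd X = \W$.

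Once $\bd X = \W$, the space is a union of round circles, each of which, being isometric to a circle of circumference $2\pi$, is a closed geodesic in $\bdT X$. Since $\dim(\bdT X) = 1$ the link at every point is discrete, and by \itemref{odd:union}--\itemref{odd:branching} every direction at every point is tangent to a round circle through that point, whose opposite direction (at angle $\pi$, since the circle is a geodesic) extends the incoming geodesic. Thus $\bdT X$ is geodesically complete. Next I would produce a proper, Tits-closed, involutive subset $A \subsetneq \bd X$, following the pattern of \lemref{even case}: taking a minimal set $N$ with $N \cap \A(M) \ne \emptyset$, so that \lemref{closed involutive pair} gives $\A(M) \s N$ and $\A(N) \s M$, the set $A := M \cup N$ is closed and involutive, and it is proper because it meets each round circle of $\bd X$ in only finitely many points while $\bd X$ is $1$-dimensional. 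Applying \thmref{lytchak rigidity} to the finite-dimensional, geodesically complete $\CAT(1)$ space $\bdT X$ then gives that $\bd X$ is a spherical building or splits as a spherical join.

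Finally I would rule out the spherical-join alternative using $\ell \ge 3$: a $1$-dimensional spherical join is a complete bipartite graph with all edges of length $\tfrac{\pi}{2}$, whose isometrically embedded round circles are $4$-cycles and hence carry at most four branch points, whereas by \itemref{odd:branching} every round circle of $\bd X = \W$ carries exactly $2\ell \ge 6$ of them. Hence $\bd X$ is a spherical building, as claimed. The clean part of the argument is the reduction $\bd X = \W$, which is exactly where $\diam(\bdT X) = \pi$ enters; the main obstacle is the middle step—pinning down the correct minimal set $N$ (note that for odd $\ell$ the circumcenter set $\Centers^{\bd X}(M)$ behaves differently than in the even case, so one cannot simply reuse that construction) and confirming that $M \cup N$ is genuinely proper and involutive, and that the union of round circles $\bd X$ really is geodesically complete in the strong sense \thmref{lytchak rigidity} demands.
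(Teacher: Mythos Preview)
Your proposal addresses only the final ``Furthermore'' clause and treats items \itemref{odd:convex}--\itemref{odd:trees} as already established. But those four items are the substance of the lemma: the paper's proof constructs $\W$ explicitly (as the union of all geodesic arcs joining points of $M$), introduces the notion of an ``$\ell$-click circle'', and proves a sequence of claims establishing \itemref{odd:union} and \itemref{odd:branching}, then \itemref{odd:convex}, then \itemref{odd:trees}. None of that work appears in your plan. In particular, the fact you invoke---that each tree $T$ meets the rest of $\bd X$ only at its root $m(T)$---is part of what \itemref{odd:trees} asserts and must itself be proved; it does not follow from the incompressible-arc argument in \lemref{even case}, which relies on having two distinct minimal sets $M,N$ at distance $\pi/\ell$, a configuration that is unavailable precisely because $\ell$ is odd.

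Even restricted to the ``Furthermore'', your route is different from and harder than the paper's. Once $\bd X = \W$, the paper simply cites the Charney--Lytchak metric characterization of spherical buildings, which applies directly to the structure described in \itemref{odd:union}--\itemref{odd:branching}, and concludes in one line. Your plan via \thmref{lytchak rigidity} requires producing a minimal $N$ with $N \cap \A(M) \neq \emptyset$; you acknowledge this as the main obstacle and offer no construction (and, as you note, the circumcenter trick from the even case does not carry over). You then need to exclude the join alternative separately, and that step also has a gap: \itemref{odd:branching} says branching is \emph{restricted to} $2\ell$ specified points on each circle, not that all $2\ell$ are genuine branch points, so the comparison with the four vertices of a $4$-cycle in a bipartite join does not by itself give a contradiction.
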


\begin{proof}
By \lemref{uniformly spaced}, $M$ is uniformly spaced around $K$ with distance $\frac{2\pi}{\ell}$ between closest points of $M \cap K$.  Since $\radius^K (M) = \pi - \frac{\pi}{\ell}$ by \lemref{centers lemma}, we see that $\abs{M \cap L} = \ell$ for any folded round circle $L \subset \bd X$.  Thus, pulling from $p \in K$ we see that
\(\Hd(p, M) = \Hd(p, M \cap K)\)
for all $p \in K$.  In particular, the set $\A(M) \cap K$ consists precisely of those points $p \in K$ such that $\dT(p, M \cap K) = \frac{\pi}{\ell}$.

Call any round circle $L \subset \bd X$ a \defn{$\ell$-click circle} if (a) it is the union of $2\ell$ geodesic arcs of length $\frac{\pi}{\ell}$ and of the form $[p,m]$, where $m \in M$ and $p \in \A(M)$, and (b) $\abs{M \cap L} = \abs{\A(M) \cap L} = \ell$.  We have just shown that every folded round circle $L \subset \bd X$ is $\ell$-click.
There are more examples.

\begin{claim}		\label{A(M) implies M}
Let $L$ be a round circle in $\bd X$.
Suppose $A \s \A(M) \cap L$ is uniformly spaced around $L$ with $\abs{A} = \ell$, and $M \cap L$ is nonempty.
Then $L$ is $\ell$-click. \end{claim}

\begin{proof}
Let $A,L$ be as in the hypothesis of the claim, and let $m \in M \cap L$.
Let $o \in L$ be the midpoint of a geodesic arc $[p,q]$ of length $\frac{2\pi}{\ell}$ with $p,q \in A$.
Let $\mu \in \BG$ pull from $o$, let $\nu \in \BG$ fold $T^{\mu} L$; since $\setp{T^{\w}}{\w \in \BG}$ is closed under composition, we may find $\w \in \BG$ such that $T^\w = T^\nu \circ T^\mu$.
Then $T^{\w} L$ is a folded round circle, so it must be $\ell$-click.
But $m \in M \cap L$, so $T^{\w} m \in M \cap T^{\w} L$, and therefore we must have $T^{\w} o \in M \cap T^{\w} L$ because $T^{\w} L$ is $\ell$-click.
But by construction, $(T^{\omega})^{-1} (T^{\w} o) = \set{o}$.
Hence $o \in M$.
Thus $\A(A) \cap L \subset M$, and in particular $\abs{M \cap L} \ge \ell$.
Yet any folding of $L$ contains only $\ell$ points of $M$, so $\abs{M \cap L} = \ell$ and therefore $M \cap L = \A(A) \cap L$.
Likewise, pulling from any $p \in \A(M) \cap L$ and folding yields a folded round circle $T^{\w} L$ such that $\A(T^{\w} p) \in M \cap T^{\w} L$ in addition to every $T^{\w} m' \in M \cap T^{\w} L$ for $m' \in M \cap L$, hence $\A(M) \cap L = A$.
Thus $L$ is $\ell$-click, as claimed.
\end{proof}

Let $\W \s \bd X$ be the union of all geodesic arcs joining points in $M$.
We will show that $\W$ is a union of $\ell$-click circles.

\begin{claim}		\label{W is an lclick union}
$\W$ is a union of $\ell$-click circles, with branching in $\W$ only occurring at points of $M \cup \A(M)$. \end{claim}

\begin{proof}
First, let $L \subset \bd X$ be any $\ell$-click circle and let $m \in M \setminus L$.  Let $x \in L$ be the closest point on $L$ to $m$; since $\radius^{\bd X} (M) < \pi$, we know $\dT(m,x) < \pi$ by \lemref{centers lemma}.  Now the geodesic arc $[m,x]$ can be extended in two distinct ways on $L$, hence there exist distinct $p_1, p_2 \in L$ such that $\dT (m, p_i) = \pi = \dT(m,x) + \dT(x,p_i)$ for $i=1,2$.  But the only points on $L$ equidistant to two $p_i \in \A(M) \cap L$ lie in $M \cup \A(M)$, since $L$ is $\ell$-click.  Therefore, $x \in M \cup \A(M)$.

Now let $m'$ be the closest point of $M \cap L$ to $p_1$ such that $m' \notin [x,p_1]$.  Then $[m,x] \cup [x,p_1] \cup [p_1,m']$ is a locally geodesic arc of length $\pi + \frac{\pi}{\ell}$.  Since $\Hd(m, M) = \Hd(m, M \cap K) = \pi - \frac{\pi}{\ell}$, there is a (unique) geodesic arc $[m,m']$ of length $\le \pi - \frac{\pi}{\ell}$.  Thus the union $L' = [m,x] \cup [x,p_1] \cup [p_1,m'] \cup [m,m']$ is a circle of circumference $\le 2\pi$.  By the CAT($1$) condition, $L'$ has circumference $2\pi$ and thus $[m,m']$ has length $\pi - \frac{\pi}{\ell}$.  We claim that $L'$ is $\ell$-click.

Let $A = \setp{q \in L'}{\dT(q,p_1) = \frac{2\pi k}{\ell} \text{ for some } k = 0,1,2,\dotsc,\frac{\ell-1}{2}}$.
Now, just as $x \in M \cup \A(M)$, it is clear that the point $y \in L \cap [m,m']$ closest to $m$ must lie in $M \cup \A(M)$.
Thus for every $q \in A$, we may find some $n \in M \cap L$ such that $\dT(n,q) = \pi$.
Hence $A \subseteq \A(M) \cap L'$; by \claimref{A(M) implies M}, $L'$ is $\ell$-click, as asserted.

It follows that every $m \in M$ lies on a $\ell$-click circle.  But then, as $L$ and $m$ were arbitrary, every geodesic arc joining points of $M$ must lie on a $\ell$-click circle also.  Thus $\W$ is a union of $\ell$-click circles.
The branching restriction is also clear now.
\end{proof}

This establishes parts \itemref{odd:union} and \itemref{odd:branching} of the lemma.

We now show part \itemref{odd:convex}, i.e. that $\W$ is complete and convex (in the Tits metric).  Note that it follows readily from the definition of $\W$ that it is Tits-closed in $\bd X$, thus $\W$ is complete.  So we consider convexity.

\begin{claim}		\label{convexity}
$\W$ is convex under the Tits metric on $\bd X$. \end{claim}

\begin{proof}
Let $\dW$ be the path metric on $\W$ induced from the Tits metric on $\bd X$; clearly $\dW \ge \dT$ on $\W$.
By \corref{short paths}, for every pair of distinct points $p,q \in \W$ with $\dW (p,q) < \pi$, the unique $\dT$-geodesic arc $[p,q]$ lies completely in $\W$, and therefore $\dW (p,q) = \dT (p,q)$.
Consequently, every $\dW$-geodesic is locally $\dT$-geodesic.
In particular, if $p,q \in \W$ satisfy $\dW (p,q) = \pi$ then $\dT (p,q) = \pi$.
Moreover, due to the highly restricted branching in $\W$, the path metric on $\W$ is realized by geodesic arcs in $\W$---that is, $(\W, \dW)$ is a geodesic metric space.

Now $\W$ is a union of $\ell$-click circles and therefore every $p \in \W$ lies on a geodesic arc $[m,p'] \subset \W$ of length $\frac{\pi}{\ell}$ for some $m \in M$ and $p' \in \A(M)$; thus $\dW (p, M) \le \frac{\pi}{\ell}$ for all $p \in \W$.
By construction, $\dW = \dT$ on $M \times M$, so $\diam_{\W} (M) = \diam_T (M) = \pi - \frac{\pi}{\ell}$.
Therefore, $\diam_{\W} (\W) \le \pi + \frac{\pi}{\ell}$.

Now let $p,q \in \W$, and let $C$ be a geodesic arc in $\bd X$ from $p$ to $q$.
We show that $C \subset \W$; since $p,q$ were arbitrary, this will prove $\W$ is convex.
So suppose, by way of contradiction, that $C$ is not completely contained in $\W$.
By trimming $C$ if necessary, we may assume that $p,q \in \W$ are chosen such that $C \cap \W = \set{p,q}$.
Note that since $C$ is geodesic (not just locally geodesic), $p \neq q$.

Let $r \in \bd X$ be the midpoint of $C$, and let $m \in M$ be the closest point of $M$ to $r$.
By \lemref{centers lemma}, $\dT(r,m) \le \frac{\pi}{\ell}$.
Thus
\begin{align}
\dT(p,m)
&\le \dT(p,r) + \dT(r,m)
\le \frac{1}{2} \dT(p,q) + \frac{\pi}{\ell}
\le \frac{1}{2} \dW(p,q) + \frac{\pi}{\ell} \label{convexity inequality} \\
&\le \frac{1}{2} \left(\pi + \frac{\pi}{\ell} \right) + \frac{\pi}{\ell}
= \frac{\pi}{2} + \frac{3\pi}{2\ell}
\le \pi \nonumber
\end{align}
because $\ell \ge 3$.
Since $\dT(p,r) < \pi$, there is a unique geodesic arc $[p,r]$ in $\bd X$.

Now, since $p \in \W$ and $m \in M$, we know $\dW(p,m) \le \pi$.
We take two cases, $\dW(p,m) = \pi$ and $\dW(p,m) < \pi$.

So first consider the case $\dW(p,m) = \pi$.
Let $D$ be a geodesic arc in $\W$ from $p$ to $q$, and let $E$ be the path in $\bd X$ obtained by concatenating geodesic arcs $[p,r]$ and $[r,m]$ in $\bd X$.
Since $\dW(p,m) = \pi$, we have $\dT(p,m) = \pi$.
This implies we have equality throughout in \itemref{convexity inequality}, and therefore $E$ is a geodesic arc.
Now suppose $E$ and $D$ are distinct.
Then $D \cup E$ must be a round circle.
Since $D \subset \W$, $D$ is the union of $\ell$ geodesic arcs of the form $[n,o]$ with $n \in M$ and $o \in \A(M)$.
Then $D \cup E$ is $\ell$-click by \claimref{A(M) implies M}.
Hence $D \cup E$ is the union of geodesic arcs between points of $M$, and therefore $D \cup E \subset \W$.
As $E \subset \W$ is clear if $E$ and $D$ are not distinct, we have $E \subset \W$ either way.
Thus $[p,r] \subset \W$.
But $[p,r] \subset C$ and $r \notin \set{p,q}$, which contradicts the fact that $C \cap \W = \set{p,q}$.
Therefore, $\dW(p,m) \neq \pi$.

Now suppose $\dW(p,m) < \pi$.
Then the unique $\dT$-geodesic arc $[p,m]$ lies completely in $\W$, and $\dW(p,m) = \dT(p,m)$.
Let $E$ be the path in $\bd X$ obtained by concatenating geodesic arcs $[p,r]$ and $[r,m]$ in $\bd X$.
Since we know $\dT(p,r) + \dT(r,m) \le \pi$ by \itemref{convexity inequality}, straightening $E$ we find $[p,m] \s [p,r] \cup [r,m]$ by \corref{short paths}.
Thus $[p,m] \s C \cup [r,m]$.
Since $C \cap \W = \set{p,q}$ but $[p,m] \subset \W$, it follows that $[p,m] \s [r,m] \cup \set{p,q}$.
So $[p,m] \s [r,m]$.
This implies $\dW(p,m) = \dT(p,m) \le \dT(r,m) \le \frac{\pi}{\ell}$.
By symmetry, we find $\dW(q,m) \le \frac{\pi}{\ell}$ also.
Thus $\dW(p,q) \le \dW(p,m) + \dW(m,q) \le \frac{2\pi}{\ell} < \pi$, hence the unique $\dT$-geodesic arc $[p,q]$ lies completely in $\W$.
This implies $C \subset \W$ and concludes our proof of \claimref{convexity}.
\end{proof}

This proves part \itemref{odd:convex} of the lemma.
We now show part \itemref{odd:trees}.

Let $p \in \bd X \setminus \W$.  By \lemref{centers lemma}, $\dT(p, M) \le \frac{\pi}{\ell}$ and thus $\dT(p, \W) \le \frac{\pi}{\ell}$.
Let $x \in \W$ be the first point in $\W$ on a geodesic arc from $p$ to $M$ of length $\le \frac{\pi}{\ell}$, and let $L \subset \W$ be a $\ell$-click circle containing $x$.
Let $\w \in \BG$ pull from $p$.
Then $T^{\w} \colon \bd X \to \bd X$ is $1$-Lipschitz and maps $\bd X$ into the suspension $\susp (T^{\w} p, \w^+) \subset \bd X$ in such a way that $\dT (T^{\w} q, T^{\w} p) = \angle (q, p)$ for all $q \in \bd X$.
It follows that $T^{\w}$ maps $L$ onto a single geodesic arc of length $\pi$, preserving distances from $X$ (up to distance $\pi - \dT(p,x)$, which is at least $\pi - \frac{\pi}{\ell}$).
Now unless $x \in M \cup \A(M)$, the two closest points $m, m' \in M \cap L$ to $x$ have different distances $a,b \in (0, \frac{2\pi}{\ell})$ to $x$, so $0 < \dT (T^{\w} m, T^{\w} m') < \frac{2\pi}{\ell}$.
Yet we know by \claimref{W is an lclick union} that $\dT(n,n') \ge \frac{2\pi}{\ell}$ for all distinct $n,n' \in M$; therefore, $x \in M \cup \A(M)$.

Now consider $m \in M$ such that $\dT(p,m) \le \frac{\pi}{\ell}$.
Concatenating the geodesic arcs $[x,p]$ and $[p,m]$ gives us a path of length $\le \frac{2\pi}{\ell} < \pi$, so by \corref{short paths} the unique geodesic $[x,m]$ lies completely in $[x,p] \cup [p,m]$.
But $[x,m] \subset \W$ by convexity of $\W$, and $[x,p] \cap \W = \set{x}$ by choice of $x$, so $[p,m]$ must pass through $x$.
Thus $\dT(p,m) = \dT(p,x) + \dT(x,m) > \dT(x,m)$.
Since $\dT(q,M) = \frac{\pi}{\ell}$ for all $q \in \A(M) \cap \W$ and $\dT(p,m) \le \frac{\pi}{\ell}$, we cannot have $x \in \A(M)$.
Thus $x \in M$.

Now let $[m,q]$ be any geodesic arc with $m \in M$ and $[m,q] \cap \W = \set{m}$.
If $\frac{\pi}{\ell} < \dT(m,q) < \frac{2\pi}{\ell}$, there is some $m' \in M \set{m}$ such that $\dT(q,m') \le \frac{\pi}{\ell}$.
Then the concatenated path $[m,q] \cup [q,m']$ has length $< \pi$, hence it contains the unique geodesic $[m,m'] \subset \W$.
But then $[m,q] \cap \W = \set{m}$ implies $[m,m'] \s [q,m']$, which is impossible because $\dT(m,m') \ge \frac{2\pi}{\ell} > \frac{\pi}{\ell} \ge \dT(q,m')$.
Since any geodesic arc $[m,q]$ of length $\ge \frac{2\pi}{\ell}$ must contain points $q'$ with $\frac{\pi}{\ell} < \dT(m,q') < \frac{2\pi}{\ell}$, we conclude that every $q \in \bd X$ with $[m,q] \cap \W = \set{m}$ must have $\dT(q,m) \le \frac{\pi}{\ell}$.

This proves part \itemref{odd:trees} of the lemma.

Finally, suppose $\diam (\bdT X) = \pi$.  Then there can be no $\R$-trees hanging off $\W$; thus $\bd X = \W$ and therefore $\bd X$ is a spherical building \cite[Theorem 6.1]{charney-lytchak}.
\end{proof}

\begin{theorem}[\thmref{MT}]	\label{main theorem}
Suppose $\dim(\bdT X) = 1$ and $M \subsetneq \bd X$ is a proper minimal subset of $\bd X$.  Let $K \subset \bd X$ be a folded round sphere, and let $\ell = \abs{M \cap K}$.  Then $1 \le \ell < \infty$.  Suppose that $M$ is chosen to minimize $\ell$, among all minimal sets $M \subset \bd X$.  If $\ell = 1$, $\bd X$ splits as a suspension.  If $\ell = 2$, $\bd X$ splits as a spherical join.  If $\ell \ge 4$ is even, or if $\ell \ge 3$ is odd and $\diam (\bdT X) = \pi$, then $\bd X$ either is a spherical building or splits as a spherical join. \end{theorem}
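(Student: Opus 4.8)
The plan is to prove the theorem by a case analysis on $\ell=\abs{M\cap K}$, assembling the lemmas of the preceding section; the only case requiring genuinely new work is $\ell=1$. First I would settle $1\le\ell<\infty$. Finiteness is exactly \lemref{infinite case}: if $M\cap K$ were infinite then $M=\bd X$, contradicting that $M$ is proper. For $M\cap K\neq\emptyset$, write $K=T^{\w}(\bd X)$ for a folding $\w$ (this is what ``folded round circle'' means); then $T^{\w}(M)\subseteq M$ because $M$ is closed and $\G$-invariant, while $T^{\w}(M)\subseteq T^{\w}(\bd X)=K$, so $\emptyset\neq T^{\w}(M)\subseteq M\cap K$. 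Passing from $M$ to a minimal set contained in $\Centers^{\bd X}(M)$ never increases $\abs{M\cap K}$, so by \lemref{centers lemma} and \lemref{uniformly spaced} a minimizing choice of $M$ is uniformly spaced around $K$; from now on $M\cap K=\{m_1,\dots,m_\ell\}$ with consecutive Tits distances $\frac{2\pi}{\ell}$.

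For $\ell\ge 2$ the theorem follows immediately from the lemmas already proved. If $\ell\ge 4$ is even, \lemref{even case} gives that $\bd X$ is a spherical building or splits as a spherical join. If $\ell=2$, then $M\cap K=\{m_1,m_2\}$ is an antipodal pair on $K$, hence a proper ($0$-dimensional) subsphere of $K$, so \thmref{radius rigidity} forces $\bd X$ to split as a spherical join. If $\ell\ge 3$ is odd and $\diam(\bdT X)=\pi$, the final sentence of \lemref{odd case} says $\bd X$ is a spherical building. This leaves only $\ell=1$.

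For $\ell=1$, \lemref{centers lemma} gives $\radius^{\bd X}(M)=\pi-\max_K\dT(\x,M\cap K)=0$, since the maximum over the circle $K$ of the distance to a single point of $K$ equals $\pi$; hence $M=\{\xi\}$ is a single $\G$-fixed boundary point. I would then show that $\bd X$ is the spherical suspension with poles $\xi$ and its antipode. Fix $p\in K\cap\A(\xi)$, so $\dT(\xi,p)=\pi$. First I would establish, by combining \lemref{antipodal to minimal} (which identifies $\D(\xi)$ with $\cl{\G p}$), \lemref{closed involutive pair}, and a further application of \lemref{centers lemma} to the $\G$-invariant sets $\cl{\G p}$ and $\{\xi\}$, that $\xi$ has a unique antipode $p$, that $p$ is itself $\G$-fixed, and that $\A(p)=\{\xi\}$. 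Then I would pull from $p$: since $p$ is fixed, $T^{\w}$ fixes $p$; since $\xi\in\A(p)=\A(\w^-)$, \corref{antipodal pi-convergence} gives $T^{\w}\xi=\w^+$, so $\w^+=\xi$ and $T^{\w}$ maps $\bd X$ into $\susp(p,\xi)$. Combining this with the $1$-Lipschitz and angle-preserving properties of pulling (\propref{pulling}) and the bound $\dT(p,\x)\le\pi$ from \lemref{centers lemma} applied to $\{p\}$ yields $\dT(\xi,q)+\dT(q,p)=\pi$ for every $q\in\bd X$, which says exactly that $\bd X$ is the suspension of its equator $\{q:\dT(\xi,q)=\frac{\pi}{2}\}$.

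The main obstacle is the $\ell=1$ step: the odd case is carried wholesale by \lemref{odd case}, which we may treat as a black box, and the even cases are immediate, but converting ``$\G$ fixes a boundary point with an antipode'' into a genuine suspension splitting requires the full pulling and $\pi$-convergence machinery. The inequality $\dT(\xi,q)+\dT(q,p)\ge\pi$ drops out of $\pi$-convergence with no difficulty; the reverse inequality --- needed to get equality, and with it the suspension --- is where care is called for, and it is there that one must exploit that $\G$ does not act minimally (so that $X$ has no rank-one axis and the relevant Tits distances are bounded by $\pi$, via the Centers Lemma and Ballmann--Buyalo).
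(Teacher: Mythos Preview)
Your case analysis and the handling of $\ell\ge 2$ match the paper exactly: finiteness via \lemref{infinite case}, $\ell=2$ via \thmref{radius rigidity} (antipodal pair is a proper subsphere), $\ell\ge 4$ even via \lemref{even case}, $\ell\ge 3$ odd with diameter $\pi$ via \lemref{odd case}. Your justification of $\ell\ge 1$ (push $M$ through the folding map) is slightly more explicit than the paper's, but equivalent.

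The divergence is at $\ell=1$. The paper disposes of this case in one line by citing \thmref{radius rigidity}: a single point of $K$ lies in a proper subsphere, and the ``more general splitting theorem'' of which \thmref{radius rigidity} is a part (Theorem~C of \cite{ricks-radius}) then yields a suspension, not merely a join, because the invariant set has radius~$0$. Your hands-on argument via pulling and $\pi$-convergence is more ambitious but, as you yourself flag, incomplete at the crucial step. You correctly obtain that $M=\{\xi\}$ and $N=\{p\}$ are each a single $\Gamma$-fixed point with $\A(\xi)=\{p\}$ and $\A(p)=\{\xi\}$, and pulling from $p$ does give $T^{\w}(\bd X)\subseteq\susp(p,\xi)$ with $\dT(p,T^{\w}q)=\dT(p,q)$. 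But this, together with $1$-Lipschitzness, only yields $\dT(\xi,q)+\dT(p,q)\ge\pi$ (which is already the triangle inequality). The ingredients you list for the reverse inequality---Centers Lemma and Ballmann--Buyalo---give $\dT(\xi,q)\le\pi$ and $\dT(p,q)\le\pi$ individually, but not that their \emph{sum} is at most $\pi$; nothing you have written rules out, say, $\dT(\xi,q)=\dT(p,q)=\tfrac{2\pi}{3}$. Pulling from $\xi$ instead of $p$ gives the same inequality again, not the opposite one. So the suspension conclusion is not actually reached by your sketch; you should instead invoke \thmref{radius rigidity} (in its full form) directly, as the paper does.
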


\begin{proof}
Because $M$ is nonempty and $K$ is folded, $\ell \ge 1$.
The fact that $\ell < \infty$ follows from \lemref{infinite case}.
The cases $\ell = 1$ and $\ell = 2$ follow from \thmref{radius rigidity}.
The case $\ell \ge 4$, $\ell$ even, is done in \lemref{even case}.
The case $\ell \ge 3$, $\ell$ odd, is done in \lemref{odd case}.
\end{proof}

The number $\ell = \ell(\Gamma) = \min \setp{\abs{M \cap K}}{M \text{ minimal}}$ in \thmref{main theorem} is determined by $\bd X$ as follows.
If $\bd X$ is a spherical join, there is a subgroup $\Gamma_0$ of $\Gamma$ of index at most $2$ that does not permute the join factors.  Thus there is a closed $\Gamma_0$-invariant set $A \subset \bd X$ such that $\abs{A \cap K} \le 2$.
If $\bd X$ is a suspension, there is a subgroup $\Gamma_0$ of $\Gamma$ of index at most $2$ that does not permute the suspension points.  Thus (assuming $\bd X$ is not a circle) there is a closed $\Gamma_0$-invariant set $A \subset \bd X$ such that $\abs{A \cap K} \le 1$.
Thus we have proved the following result.

\begin{theorem}[\thmref{MC3}]	\label{table}
Let $\Gamma$ be a group acting geometrically on a proper $\CAT(0)$ space $X$ with $\dim(\bdT X) = 1$.
Let $K \subset \bd X$ be a folded round sphere.
For each finite-index subgroup $\Gamma_0$ of $\Gamma$, let
$\ell(\Gamma_0) = \inf \abs{M \cap K}$,
where the infimum is taken over minimal nonempty, closed, $\Gamma_0$-invariant subsets $M$ of $\bd X$.

The following table summarizes the complete situation (here $\min \ell(\Gamma_0)$ is taken over all finite-index subgroups $\Gamma_0$ of $\Gamma$):
\textnormal{
\begin{center}
\begin{tabular}{l l l}
\hline
$\bd X$ & possible $\ell(\Gamma)$ & $\min \ell(\Gamma_0)$ \\
\hline
circle & $1$, $2$, $3$, $4$, or $6$ & $1$ \\
suspension but not a circle & $1$ or $2$ & $1$ \\
spherical join but not a suspension & $2$ or $4$ & $2$ \\
irreducible spherical building & integers $\ge 3$ & $\ge 3$ \\
minimal & $\infty$ & $\infty$ \\
$\pi < \diam (\bdT X) \le \pi + \frac{\pi}{\ell(\Gamma)}$ & odd integers $\ge 3$ & odd $\ge 3$ \\
\hline
\end{tabular}
\end{center}} \end{theorem}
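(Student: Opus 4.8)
The plan is to read off the table one row at a time, combining \thmref{main theorem} and the structure produced inside \lemref{odd case} with the two constructions sketched just above. I would first dispose of the forward direction, which settles most rows at once. If $\G$ acts minimally on $\bd X$ then the only minimal set is $\bd X$, so $\ell(\G) = \abs{\bd X \cap K} = \infty$; for the third column, a finite-index subgroup $\G_0$ with a proper minimal set $M$ would have $M$ meeting every round circle in a finite set (by \lemref{infinite case}, since $M \neq \bd X$), while $\bd X$ is a union of finitely many $\G$-translates of $M$ (as the $\G$-stabilizer of $M$ contains $\G_0$ and $\G$ acts minimally), so $\abs{K} = \abs{\bd X \cap K}$ would be finite --- absurd; hence $\min\ell(\G_0) = \infty$. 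When a proper minimal set exists --- equivalently, when $\G$ does not act minimally, in which case no finite-index $\G_0$ does either, because a proper closed $\G$-invariant set is also $\G_0$-invariant --- pick one minimizing $\ell$. Then \thmref{main theorem}, together with the bound $\diam(\bdT X) \le \pi + \frac{\pi}{\ell}$ supplied by \lemref{odd case} (its $\W$-diameter estimate plus the tree estimate), gives exactly five mutually exclusive outcomes: $\ell = 1$ and $\bd X$ is a suspension; $\ell = 2$ and $\bd X$ is a spherical join; $\ell \ge 3$, $\diam(\bdT X) = \pi$, and $\bd X$ is a spherical building or join; $\ell \ge 4$ even and $\bd X$ is a spherical building or join; or $\ell \ge 3$ odd, $\diam(\bdT X) > \pi$, and $\pi < \diam(\bdT X) \le \pi + \frac{\pi}{\ell}$. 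This immediately yields the last two rows: an irreducible spherical building is neither a suspension nor a nontrivial join, so $\ell \notin \{1,2\}$ and only ``integer $\ge 3$'' survives (and the same argument applied to every $\G_0$, which also fails to be minimal, gives $\min\ell(\G_0) \ge 3$); and the exceptional-diameter row is read off verbatim, since $\diam(\bdT X) > \pi$ is intrinsic to $\bd X$ and passing to $\G_0$ keeps us in that case with odd $\ell \ge 3$. It also shows the first three rows can only contain the listed values --- apart from the circle case.

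Next I would pin down the suspension and join rows, including the $\min\ell(\G_0)$ column, using the canonical join/suspension decomposition of $\bd X$. A spherical join $\bd X = A * B$ has a canonical de Rham--type decomposition, so a subgroup $\G_0$ of index at most $2$ preserves the two factors; since $\dim(\bdT X) = 1$ both factors are $0$-dimensional, every round circle meets $\bd X$ as a ``square'' $\{a,a'\} * \{b,b'\}$ with $a,a' \in A$ and $b,b' \in B$, and $\G_0$ preserves the join parameter, so every $\G_0$-minimal set lies in a single level set of that parameter. A $\G_0$-minimal set inside $A$ (or $B$) meets $K$ in at most $\{a,a'\}$; a $\G_0$-minimal set in an interior level meets the square $K$ in at most the four points of $K$ on that level, and a short computation of the gaps between those four points shows that a uniformly-spaced intersection can contain only $2$ or $4$ of them. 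Hence $\ell(\G_0) \in \{2,4\}$, and it is $\ge 2$ unless $\bd X$ is a suspension (by \thmref{main theorem} applied to $\G_0$); so $\min\ell(\G_0) = 2$ for a join that is not a suspension, while for $\G$ itself $A \cup B$ is $\G$-invariant (the nontrivial coset swaps the factors) and meets $K$ in four points, giving $\ell(\G) \le 4$ and hence $\ell(\G) \in \{2,4\}$. When $\bd X$ is a suspension but not a circle, the two suspension points form a $\G$-invariant pair through which every round circle passes; an index-$\le 2$ subgroup fixes it pointwise, so $\min\ell(\G_0) = 1$, and the pair itself is a minimal set meeting $K$ in $\le 2$ points when $\G$ swaps the poles, so $\ell(\G) \le 2$; with $\ell \ge 1$ always (since $K$ is folded and minimal sets are nonempty) and \thmref{main theorem}, this gives $\ell(\G) \in \{1,2\}$.

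It remains to treat the circle row, where $K = \bd X$ is a round circle of circumference $2\pi$. Here a minimal set minimizing $\ell$ is a regular $\ell$-gon by \lemref{uniformly spaced}, and I would show that $\ell(\G)$ equals the order $n$ of the rotation part of the image $\bar\G$ of $\G$ in $\Isom(\bd X) = O(2)$: a rotation group $C_n$ has every orbit of size $n$, and a dihedral group $D_n$ has the orbit of any point on a reflection axis of size exactly $n$ with no smaller orbit, so the smallest minimal set has exactly $n$ points. To cut the possibilities for $n$ down to $\{1,2,3,4,6\}$ I would invoke the crystallographic restriction: a round circle filling all of $\bdT X$ forces $X$ to contain a flat plane $F$ with $\bd F = \bd X$, and a finite-index subgroup of $\G$ acts cocompactly on the parallel class of $F$, so the cocompact lattice inside $F$ acts trivially on $\bd F$, $\bar\G$ is finite, and $\bar\G$ is the point group of a two-dimensional crystallographic group --- whose rotation order is $1$, $2$, $3$, $4$, or $6$. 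Conversely each value is realized by a standard flat-orbifold group (e.g. $\Z^2$ and $\Z^2 \rtimes C_n$ acting on $\R^2$), and $\min\ell(\G_0) = 1$ because the kernel of $\G \to \bar\G$ has finite index and fixes $\bd X$ pointwise.

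I expect the main obstacle to be exactly this last point: showing that a proper $\CAT(0)$ space under a geometric action whose Tits boundary is an honest round circle carries, up to finite index, a flat-plane structure with crystallographic holonomy --- equivalently, that the induced action on $\bd X \cong S^1$ has finite image. This is the one place where the argument must leave the combinatorics of \thmref{main theorem} and \lemref{odd case} and use the geometry of the underlying space, and I would expect to quote flat-plane results in the spirit of \cite{bms}, together with the structure theory of groups acting geometrically on $\CAT(0)$ spaces with one-dimensional boundary, rather than reprove them. Everything else --- the forward dichotomy, the behavior under finite-index subgroups, and the join/suspension bounds --- is bookkeeping with results already established.
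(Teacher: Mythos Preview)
Your proposal is correct and, in fact, considerably more thorough than the paper's own treatment. The paper regards \thmref{table} as an immediate corollary of \thmref{main theorem}: its entire ``proof'' is the short paragraph preceding the statement, which observes only that for a spherical join (resp.\ suspension) an index-$\le 2$ subgroup $\Gamma_0$ preserves the two factors (resp.\ fixes the poles), giving a closed $\Gamma_0$-invariant set meeting $K$ in at most $2$ (resp.\ $1$) points. That establishes the $\min\ell(\Gamma_0)$ column for those rows; everything else in the table is read off from \thmref{main theorem} and \lemref{odd case} without further comment.

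Your argument tracks this for the join/suspension rows and the last three rows, but you add substantial content the paper omits: the explicit argument that $\min\ell(\Gamma_0)=\infty$ in the minimal row (via \lemref{infinite case} and covering $K$ by finitely many translates of $M$), the level-set analysis in the join case showing the uniformly spaced intersection forces $\ell\in\{2,4\}$, and above all the circle row. The paper gives no justification whatsoever for the list $\{1,2,3,4,6\}$; your crystallographic-restriction approach (a round-circle boundary forces a cocompact flat, hence the image of $\Gamma$ in $\Isom(S^1)$ is the point group of a wallpaper group) is exactly the right geometric input, and your caveat that this step requires invoking flat-plane results external to the present paper is well placed. In short, the paper treats the theorem as bookkeeping, while you supply the bookkeeping in full together with the one genuinely nontrivial missing ingredient.
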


\begin{remark}
No example of the last case ($\pi < \diam (\bdT X) \le \pi + \frac{\pi}{\ell(\Gamma)}$) is known, and (at least if $X$ is geodesically complete) would provide a counterexample to the Diameter Rigidity Conjecture stated in \cite{bb}.
\end{remark}

\begin{remark}
The bound $\diam (\bdT X) \le \pi + \frac{\pi}{3}$ coincides with Guralnik and Swenson's bound \cite{gs} for $\dim (\bdT X) = 1$.
\end{remark}

\thmref{MC1} is an immediate corollary of \thmref{main theorem}.

\begin{theorem}[\thmref{MC1}]	\label{dimension 1, diameter pi, boundary statement}
Suppose $\dim(\bdT X) = 1$.  If $\diam (\bdT X) = \pi$ and $\Gamma$ acts minimally on $\bd X$, then $\bd X$ is a spherical building or a spherical join. \end{theorem}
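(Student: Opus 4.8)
The plan is to deduce this statement directly from Theorem~\ref{main theorem}. As displayed, the hypothesis should read that $\Gamma$ does \emph{not} act minimally on $\bdc X$, matching the statement of Theorem~\ref{MC1} in the introduction and the abstract; indeed, the literal hypothesis that $\Gamma$ acts minimally would place $\bd X$ in the ``minimal'' row of Theorem~\ref{table}, where $\ell = \infty$ and $\bd X$ is neither a building nor a join, so the intended reading is the negation. We therefore assume $\Gamma$ does not act minimally on $\bdc X$.

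First I would extract a proper minimal invariant set. Since $\Gamma$ does not act minimally, there is a proper, nonempty, closed, $\Gamma$-invariant subset $A \subsetneq \bdc X$. As $\bdc X$ is compact metrizable, a standard Zorn's-lemma argument---using that a chain of nonempty closed invariant subsets of $A$ has nonempty (closed, invariant) intersection by compactness---produces a minimal such set $M \subseteq A$, so $M \subsetneq \bd X$ is a proper minimal subset. Round spheres exist in $\bdT X$ and, since $\dim(\bdT X) = 1$, are circles; moreover some $\w \in \BG$ folds any such circle, so we may fix a folded round circle $K \subset \bdT X$ and set $\ell = \abs{M \cap K}$. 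Because $M$ is proper, \lemref{infinite case} forces $M \cap K$ to be finite, so $1 \le \ell < \infty$; replacing $M$ if necessary, I would choose $M$ to minimize $\ell$ over all minimal sets, exactly as in Theorem~\ref{main theorem}.

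Next I would run the case analysis of Theorem~\ref{main theorem}, using the hypothesis $\diam(\bdT X) = \pi$ to settle the odd case. If $\ell = 1$ then $\bd X$ splits as a suspension, and if $\ell = 2$ then $\bd X$ splits as a spherical join; both are instances of a spherical join, since a suspension is a join with $S^0$. If $\ell \ge 4$ is even, then $\bd X$ is a spherical building or splits as a spherical join by \lemref{even case}. Finally, if $\ell \ge 3$ is odd, then because $\diam(\bdT X) = \pi$ the odd case of Theorem~\ref{main theorem} (that is, \lemref{odd case}) applies, and again $\bd X$ is a spherical building or a spherical join. These cases exhaust all $\ell \ge 1$, so in every case $\bd X$ is a spherical building or a spherical join, as desired.

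There is no genuine obstacle at this stage: the statement is a bookkeeping corollary of Theorem~\ref{main theorem}, and essentially all of the difficulty is absorbed into that theorem---in particular into \lemref{even case} and \lemref{odd case}, which build the Tits-closed convex subset $\W$ of $\ell$-click circles and establish geodesic completeness. The one point worth emphasizing is the \emph{role} of the hypothesis $\diam(\bdT X) = \pi$: it is precisely what excludes the open last row of Theorem~\ref{table}, namely $\pi < \diam(\bdT X) \le \pi + \frac{\pi}{\ell}$, in the odd case, where Theorem~\ref{main theorem} would otherwise stop short of the building-or-join dichotomy.
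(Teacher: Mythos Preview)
Your proposal is correct and matches the paper's approach: the paper simply states that this theorem is an immediate corollary of \thmref{main theorem}, and you have spelled out exactly that deduction, including the (correct) observation that the displayed hypothesis should read that $\Gamma$ does \emph{not} act minimally. The extra detail you supply---existence of a proper minimal set via Zorn's lemma and the case-by-case appeal to \lemref{even case} and \lemref{odd case}---is precisely the unpacking of ``immediate corollary.''
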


Applying \thmref{leeb}, we obtain the following.

\begin{corollary}[\corref{MC2}]	\label{dimension 1, diameter pi, geodesically complete}
Suppose $\dim(\bdT X) = 1$ and $X$ is geodesically complete.  If $\diam (\bdT X) = \pi$ and $\Gamma$ acts minimally on $\bd X$, then $X$ is a Euclidean building, higher rank symmetric space, or a nontrivial product. \end{corollary}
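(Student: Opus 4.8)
The plan is to derive this corollary formally from \thmref{dimension 1, diameter pi, boundary statement}, using the two classical rigidity results recalled in Section~\ref{spaces}: the splitting criterion \cite[Theorem~II.9.24]{bridson} for $\CAT(0)$ spaces whose Tits boundary is a spherical join, and Leeb's theorem (\thmref{leeb}). Essentially all of the hard work is already contained in \thmref{main theorem}, so the argument here is short; the only thing that needs attention is organizing the case distinction.

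First I would apply \thmref{dimension 1, diameter pi, boundary statement}, whose hypotheses are satisfied here, to conclude that $\bdT X$ is either a spherical building or a spherical join. I would then split into two cases. Suppose first that $\bdT X$ splits as a nontrivial spherical join; I would fold the case of a reducible spherical building into this one, since a reducible spherical building is itself a nontrivial spherical join. Then, because $X$ is a complete, geodesically complete $\CAT(0)$ space, \cite[Theorem~II.9.24]{bridson} provides a splitting of $X$ as a nontrivial product, and we are done.

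In the remaining case $\bdT X$ is an \emph{irreducible} spherical building. Here I would note that the hypothesis $\dim(\bdT X) = 1$ forces $\bdT X$ to be non-discrete, since a discrete $\CAT(1)$ space has geometric dimension $0$. Thus $\bdT X$ is a non-discrete irreducible spherical building under the Tits metric, and since $X$ is proper and geodesically complete, \thmref{leeb} shows that $X$ is a Euclidean building or a higher rank symmetric space. This exhausts the cases.

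I do not expect any real obstacle: all of the substantive analysis is carried out in \thmref{main theorem}/\thmref{dimension 1, diameter pi, boundary statement}. The only two points that require a moment of care are (i) routing the reducible-spherical-building possibility into the spherical-join branch so that \cite[Theorem~II.9.24]{bridson} applies, and (ii) observing that $\dim(\bdT X) = 1$ automatically delivers the non-discreteness hypothesis of \thmref{leeb}.
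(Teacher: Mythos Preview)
Your proposal is correct and matches the paper's approach. The paper's proof is the single sentence ``Applying \thmref{leeb}, we obtain the following,'' which is shorthand for exactly the case split you describe: the spherical-join case is handled by \cite[Theorem~II.9.24]{bridson} (recalled just before \thmref{leeb}), and the irreducible-building case by \thmref{leeb}; your observations about folding reducible buildings into the join branch and about non-discreteness following from $\dim(\bdT X)=1$ are the appropriate details to fill in.
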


\bibliographystyle{amsplain}
\bibliography{refs}

\end{document}